\documentclass[12pt,oneside]{amsart}

\usepackage[a4paper]{geometry}

\usepackage{amsmath}
\usepackage{amsthm}
\usepackage{amsfonts}
\usepackage{amssymb}
\usepackage{mathabx}
\usepackage{mathtools}

\usepackage{graphicx}
\usepackage{epstopdf}
\usepackage{caption}
\usepackage{subcaption}
\captionsetup[subfigure]{labelfont=rm}

\usepackage[nodayofweek]{datetime}
\usepackage{hyperref}
\usepackage{cleveref}
\usepackage{hyphenat}

\newcommand{\sminus}{\mathbin{\setminus}}
\newcommand{\pitch}{\mathbin{\pitchfork}}

\newtheorem{theorem}{Theorem}[section]
\newtheorem{lemma}[theorem]{Lemma}
\newtheorem{proposition}[theorem]{Proposition}
\newtheorem{corollary}[theorem]{Corollary}

\theoremstyle{definition}
\newtheorem{definition}[theorem]{Definition}
\newtheorem{claim}[theorem]{Claim}
\newtheorem{example}[theorem]{Example}

\newcommand{\qedclaim}{\addvspace{\topsep}}
\newcommand{\newcase}{\addvspace{.6\topsep}}

\newcommand{\sep}{\text{Sep}}
\newcommand{\C}{\mathcal{C}}
\newcommand{\ksep}{\mathcal{K}}

\newcommand{\mcg}{\text{MCG}}

\newcommand{\X}{\mathfrak{X}}
\newcommand{\nest}{\sqsubseteq}
\newcommand{\nestneq}{\sqsubsetneq}
\newcommand{\G}{\mathcal{G}}

\newcommand{\W}[1]{{W}^{({#1})}}
\newcommand{\N}[2]{N^{(#1)}_{#2}}
\newcommand{\diam}{\text{diam}}

\newcommand{\Gam}[1]{{\Gamma}^{({#1})}}
\newcommand{\mygraph}{twist\hyp{}free multicurve graph}

\newcommand{\A}{\mathcal{A}}

\makeatletter
\def\subsection{\@startsection{subsection}{2}
  \z@{.5\linespacing\@plus.7\linespacing}{.3\linespacing}
  {\normalfont\scshape}}
\makeatother

\title{Hierarchical hyperbolicity of graphs of multicurves}
\author{Kate M. Vokes}
\address{Institut des Hautes \'Etudes Scientifiques, 35 route de Chartres, 91440 Bures-sur-Yvette, France}
\email{vokes@ihes.fr}

\thanks{Date: 2 August 2019}

\begin{document}

\maketitle

\begin{abstract}
We show that many graphs naturally associated to a connected, compact, orientable surface are hierarchically hyperbolic spaces in the sense of Behrstock, Hagen and Sisto.
They also automatically have the coarse median property defined by Bowditch.
Consequences for such graphs include a distance formula analogous to Masur and Minsky's distance formula for the mapping class group, an upper bound on the maximal dimension of quasiflats, and the existence of a quadratic isoperimetric inequality.
The hierarchically hyperbolic structure also gives rise to a simple criterion for when such graphs are Gromov hyperbolic.
\end{abstract}

\section{Introduction} \label{section: introduction}

Let $S$ be a connected, compact, orientable surface.
Over the past decades, various graphs and complexes have been defined where each vertex represents an isotopy class of curves or multicurves in~$S$.
Such graphs have proved an important tool in the study of the large scale geometry of mapping class groups, Teichm\"uller theory and the geometry of hyperbolic 3\hyp{}manifolds.
A first example is the curve graph, defined by Harvey~\cite{harvey}, which has a vertex for every isotopy class of curves in the surface, with an edge joining two vertices if the corresponding curves can be realised disjointly.
We equip this graph with the combinatorial metric $d_S$ defined by setting each edge to have length~1.
Masur and Minsky proved that the curve graph is Gromov hyperbolic, with infinite diameter~\cite{mm1}.
Moreover, in~\cite{mm2}, they gave a distance formula for the mapping class group, proving that distances in the word metric can be approximated in terms of a sum of projections to curve graphs of subsurfaces.
In the time since this result, this distance formula has been generalised to many other spaces associated to surfaces; see for example~\cite{mahierarchy, ms, mjrank, rafi07, sultan}.

One generalisation of the results of~\cite{mm2} is the notion of a hierarchically hyperbolic space, defined by Behrstock, Hagen and Sisto~\cite{hhs1, hhs2}.
This property in particular implies the existence of a distance formula analogous to that for the mapping class group.
The idea is to state necessary conditions, based on results from~\cite{mm2} and elsewhere, to give consequences such as the distance formula.
In particular, every hierarchically hyperbolic space is equipped with projections to a family of hyperbolic spaces, by analogy with subsurface projections to curve graphs.
We shall state the definition fully in Section~\ref{subsection: hhs}.

Hierarchical hyperbolicity also implies the coarse median property defined by Bowditch in~\cite{bowmedian}.
This is a notion of non\hyp{}positive curvature for which mapping class groups are again a motivating example.
A coarse median space is equipped with a ternary operator which is approximated on finite subsets by the median operation on a finite median algebra.

In this paper, we show that graphs of multicurves associated to surfaces, satisfying certain natural conditions, are hierarchically hyperbolic spaces, and also derive some consequences.
For some of the graphs to which our results apply, such as the pants graph, hierarchical hyperbolicity is already known, and for others at least some of the consequences stated below in Section~\ref{subsection: results} are already understood.
However, our results cover a fairly general family of graphs associated to surfaces and we are able to deduce new information about interesting examples, such as the separating curve graph.

Our result is also applied in recent work of Russell~\cite{russell} to prove that certain of the graphs we consider are relatively hyperbolic.
This applies, for example, to separating curve graphs of closed surfaces.

\subsection{Statement of results} \label{subsection: results}

We will call the graphs to which our results apply \mygraph{}s.
We will give a full definition of this in Section~\ref{subsection: mygraph definition}, along with some examples.
Also in Section~\ref{section: preliminaries}, we will give more background on curve graphs and subsurface projections and on hierarchically hyperbolic spaces.

\begin{theorem} \label{theorem: g hhs}
Let $S$ be a surface and $\G(S)$ a \mygraph{}.
Let $\X$ be the set of subsurfaces such that for every $X \in \X$, every vertex of $\G(S)$ has non\hyp{}trivial subsurface projection to~$X$.
Then $\G(S)$ is a hierarchically hyperbolic space with respect to subsurface projections to the curve graphs of subsurfaces in~$\X$.
\end{theorem}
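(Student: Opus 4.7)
The plan is to verify the axioms of a hierarchically hyperbolic space directly, taking as index set $\mathfrak{S} = \mathfrak{X}$ and associating to each $Y \in \mathfrak{X}$ the curve graph $\mathcal{C}(Y)$, which is hyperbolic by Masur--Minsky. The three relations on $\mathfrak{X}$ come from surface topology: $Y \nest Z$ when $Y$ is an isotopy subsurface of $Z$, orthogonality when $Y$ and $Z$ can be realised disjointly with disjoint boundaries, and transversality otherwise. The distinguished $\nest$--maximal element plays the role of $S$ itself (with $\mathcal{C}(S)$ the ambient curve graph, or a related hyperbolic space). The projections $\pi_Y \colon \G(S) \to \mathcal{C}(Y)$ are defined by taking a vertex $v$ (a multicurve) and sending it to the standard subsurface projection of its component curves to $\mathcal{C}(Y)$; this is well defined (up to bounded error) exactly because $Y \in \mathfrak{X}$, i.e.\ every vertex has non-trivial projection to $Y$.

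The first substantive step is to show that each $\pi_Y$ is coarsely Lipschitz with respect to the edge metric on $\G(S)$. For two adjacent multicurves $v, v'$, the defining axioms of a \mygraph{} must be used: by hypothesis the edge relation is such that $v$ and $v'$ differ in a topologically bounded way (this is where the twist-free condition is crucial, since otherwise Dehn twists in a subsurface could produce unbounded projections across an edge). One then obtains uniform bounds on $d_{\mathcal{C}(Y)}(\pi_Y(v), \pi_Y(v'))$ for every $Y \in \mathfrak{X}$. After that, most of the combinatorial HHS axioms — nesting, orthogonality, finite complexity, containers, bounded geodesic image, large links, consistency (Behrstock's inequality), and the behaviour of relative projections $\rho^Y_Z$ — can be inherited essentially verbatim from the Masur--Minsky machinery for subsurface projections, since the projections $\pi_Y$ factor through curve graph projections of the constituent curves.

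The two genuinely structural axioms are \emph{partial realisation} and \emph{uniqueness}. For partial realisation, given pairwise orthogonal subsurfaces $Y_1, \ldots, Y_k \in \mathfrak{X}$ and points $p_i \in \mathcal{C}(Y_i)$, I would build a multicurve in $\G(S)$ whose projection to each $Y_i$ is within bounded distance of $p_i$ and whose projections to transverse domains are controlled; this is where the assumption that $\G(S)$ is a \mygraph{} should provide enough flexibility (one realises the $p_i$ as curves, completes to a multicurve using curves that lie in all relevant subsurfaces, and argues that the result lies in $\G(S)$). For uniqueness, one must show that if $d_{\mathcal{C}(Y)}(\pi_Y(u), \pi_Y(v))$ is bounded for every $Y \in \mathfrak{X}$, then $d_{\G(S)}(u, v)$ is bounded by a function of this threshold. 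This is the main obstacle: it requires a distance formula-type argument adapted to $\G(S)$, typically by inducting on complexity and producing short paths in $\G(S)$ whenever the data across all curve graphs is bounded. Here I would exploit whatever a \mygraph{} permits in the way of ``elementary moves'' between nearby multicurves, together with the fact that vertices only exist for multicurves projecting non-trivially to every $Y \in \mathfrak{X}$, so that bounded projection data really pins down a multicurve up to bounded ambiguity.

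Once all axioms are verified, the theorem follows. I expect the bulk of the argument to be organised around (i) a careful definition of $\mathfrak{X}$ and the projections in Section~\ref{section: preliminaries}, (ii) the Lipschitz estimate for $\pi_Y$ which crystallises the role of the twist-free condition, and (iii) the uniqueness/partial realisation pair, which is where the specific combinatorics of \mygraph{}s does the real work.
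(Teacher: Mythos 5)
Your overall framing --- take $\mathfrak{S}=\X$, use subsurface projections, observe that Axioms 1--8 follow from the Masur--Minsky machinery, and isolate partial realisation and uniqueness as the substantive points --- matches the paper's diagnosis of where the difficulty lies. But at exactly those two points your proposal has genuine gaps rather than arguments. For partial realisation you say you would realise the $p_i$ as curves, complete to a multicurve, and ``argue that the result lies in $\G(S)$''; in general it does not. The vertex set of a \mygraph{} can be very restrictive (for $\sep(S)$ a vertex is a single separating curve), so there is no reason a multicurve assembled from the $p_i$ and the boundaries $\partial_S Y_i$ is a vertex, and finding a genuine vertex of $\G(S)$ with all the required projection bounds is itself nontrivial. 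For uniqueness you correctly identify it as the main obstacle but offer only the intention to ``produce short paths in $\G(S)$ whenever the data across all curve graphs is bounded''; no mechanism for producing such paths inside $\G(S)$ is given, and none is readily available because elementary moves in $\G(S)$ are not under your control.

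The paper resolves both problems with one device that is absent from your proposal: an auxiliary graph $\ksep_\G(S)$ whose vertices are \emph{all} multicurves $a$ such that no component of $S\sminus a$ is a witness, with edges given by adding/removing a curve and by flip moves. In $\ksep_\G(S)$ every pants decomposition is a vertex (this uses the absence of annular witnesses, so every witness has positive complexity), which makes partial realisation easy; and uniqueness is proved by an explicit path-building induction using annulus systems in $S\times I$ and tight ladders, filling in non-small bricks in order of decreasing complexity and bounding the resulting path length via the Bounded Geodesic Image Theorem. One then shows the inclusion $\G(S)\hookrightarrow\ksep_\G(S)$ is a quasi-isometry --- itself requiring a coarse Lipschitz retraction built from the finiteness of $\mcg(S)$-orbits --- and transfers the structure back. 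A smaller point: you attribute the coarse Lipschitz property of $\pi_Y$ to the ``twist-free'' condition, but that property comes from the bounded-intersection axiom together with $\mcg(S)$-equivariance; ``twist-free'' refers to the exclusion of annular witnesses, whose real role is to guarantee that pants decompositions are vertices of the auxiliary graph and that the tight-ladder construction takes place in positive-complexity curve graphs.
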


Corollary \ref{corollary: g distance formula} below is a distance formula for~$\G(S)$, analogous to that of Masur and Minsky for the mapping class group.
It follows immediately from Theorem~\ref{theorem: g hhs}, using \cite[Theorem~4.5]{hhs2}.
Here, the notation $A \asymp_{K_1,K_2} B$ means $\frac{1}{K_1}(A-K_2) \le B \le K_1A+K_2$.
The function $\left[\ \right]_C$ is the cutoff function where $\left[x\right]_C=x$ when $x\ge C$ and $\left[x\right]_C=0$ when $x<C$.
The map $\pi_X$ is the subsurface projection from $\G(S)$ to~$\C(X)$ (see Section \ref{subsection: subsurface projection}).

\begin{corollary} \label{corollary: g distance formula}
Let $\G(S)$ be a \mygraph{}.
Then there exists a constant $C_0$ such that for every $C \ge C_0$ there exist $K_1$ and $K_2$ such that the following holds.
For every pair $a$, $b$ of vertices of $\G(S)$, we have:
\[ \pushQED{\qed} d_{\G(S)}(a, b) \asymp_{K_1,K_2} \sum_{X \in \X} \left[d_X(\pi_X(a), \pi_X(b)) \right]_C. \qedhere \popQED \]
\end{corollary}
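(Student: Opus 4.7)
The plan is to apply the general hierarchically hyperbolic distance formula of Behrstock, Hagen and Sisto to the HHS structure on $\G(S)$ produced by Theorem~\ref{theorem: g hhs}. Recall that \cite[Theorem~4.5]{hhs2} asserts that in any hierarchically hyperbolic space $(\mathcal{Y},\mathfrak{S})$ there is a constant $C_0$ such that, for every $C \ge C_0$, there exist $K_1,K_2$ with
\[ d_{\mathcal{Y}}(a,b) \asymp_{K_1,K_2} \sum_{U \in \mathfrak{S}} \bigl[ d_{U}(\pi_U(a),\pi_U(b)) \bigr]_{C}, \]
where the sum runs over the index set of the HHS structure and $\pi_U \colon \mathcal{Y} \to \C U$ denotes the coordinate projection to the hyperbolic space associated with~$U$.

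So the first step is to invoke Theorem~\ref{theorem: g hhs} to fix an HHS structure on $\G(S)$: its index set is~$\X$, its associated hyperbolic spaces are the curve graphs $\C(X)$ for $X \in \X$, and its coordinate projections are the subsurface projections $\pi_X$. The second step is to plug this data into \cite[Theorem~4.5]{hhs2}. The constants $C_0$, $K_1$ and $K_2$ produced by that theorem depend only on the HHS structure, so they are uniform over the pair $(a,b)$, and the resulting quasi-equality is exactly the displayed formula in the corollary.

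There is no substantive obstacle here, as the content of the corollary is already packaged in Theorem~\ref{theorem: g hhs}: the only point that needs to be checked is that the coordinate projections provided by the HHS structure coincide with the subsurface projections $\pi_X$ defined in Section~\ref{subsection: subsurface projection}, and this identification is built into the conclusion of Theorem~\ref{theorem: g hhs}. The corollary therefore follows as an immediate specialisation of the general HHS distance formula, which is why it warrants no further argument beyond the quotation already given.
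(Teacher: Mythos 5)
Your proposal is correct and matches the paper's own argument: the paper likewise derives the corollary as an immediate application of \cite[Theorem~4.5]{hhs2} to the hierarchically hyperbolic structure on $\G(S)$ furnished by Theorem~\ref{theorem: g hhs}, with index set $\X$ and the subsurface projections as coordinate projections. No further comment is needed.
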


\begin{corollary} \label{corollary: g coarse median}
Let $\G(S)$ be a \mygraph{} and let $\nu$ be the maximal cardinality of a set of pairwise disjoint subsurfaces in $\X$.
Then $\G(S)$ is a coarse median space of rank $\nu$.
\end{corollary}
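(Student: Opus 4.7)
The plan is to deduce this immediately from Theorem~\ref{theorem: g hhs} by invoking the general result that every hierarchically hyperbolic space is coarse median, with the coarse median rank controlled by the maximal size of a set of pairwise orthogonal domains. This was established by Bowditch in \cite{bowmedian} and by Behrstock, Hagen and Sisto~\cite{hhs2}, so the work is to (a) identify the orthogonality relation in the HHS structure of Theorem~\ref{theorem: g hhs}, and (b) check that the resulting rank equals the combinatorially defined $\nu$.

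First I would recall (or read off from the proof of Theorem~\ref{theorem: g hhs}) that the HHS structure on $\G(S)$ has index set $\X$ and that two domains $X, Y \in \X$ are declared orthogonal precisely when the subsurfaces $X$ and $Y$ are disjoint in $S$; the other relations (nesting, transversality) are the expected ones coming from subsurface projections. This is the standard way orthogonality appears in HHS structures modelled on curve and marking graphs, so it should require no new work beyond pointing back to the construction. With this in hand, a collection of domains in $\X$ is pairwise orthogonal in the HHS sense if and only if the corresponding subsurfaces are pairwise disjoint in~$S$, so the maximum size of a pairwise orthogonal subset of $\X$ is exactly $\nu$.

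The second step is then a direct citation: by the main theorem of~\cite{bowmedian} (or by the version stated in~\cite{hhs2}), any hierarchically hyperbolic space is a coarse median space whose rank is at most the maximum cardinality of a pairwise orthogonal subset of domains. Combining this with the previous paragraph yields that $\G(S)$ is coarse median of rank at most $\nu$. For the matching lower bound one observes that a set of $\nu$ pairwise disjoint subsurfaces in $\X$ produces a standard product region in $\G(S)$ quasi\hyp{}isometric to a product of $\nu$ hyperbolic spaces, hence contains a coarsely embedded copy of $\mathbb{Z}^\nu$; the coarse median rank must therefore be at least $\nu$.

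The only genuinely non\hyp{}formal point is (a): one must be sure that in the HHS structure produced by Theorem~\ref{theorem: g hhs} the orthogonality relation really coincides with disjointness of subsurfaces in $\X$, rather than some coarser or finer relation. This is where I would expect the main (but still routine) checking to take place, and it amounts to re\hyp{}reading the construction in the proof of Theorem~\ref{theorem: g hhs}. Everything else is a black\hyp{}box application of results already in the literature.
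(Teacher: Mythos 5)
Your proposal follows essentially the same strategy as the paper: treat the coarse median property as a black-box consequence of hierarchical hyperbolicity and reduce the rank computation to identifying the orthogonality relation with disjointness of subsurfaces, which is indeed exactly how the HHS structure is set up (Axiom~3 in Section~\ref{subsection: 1 to 8}). The one place where you diverge is in pinning down the exact rank. The paper cites \cite[Corollary~2.15]{hhsflats}, whose notion of rank (\cite[Definition~1.9]{hhsflats}) already builds in the requirement that the relevant hyperbolic spaces be unbounded; so the paper only needs to observe that each $\pi_X(\G(S))$ has infinite diameter in $\C(X)$ (immediate, since $\pi_X$ is surjective onto the infinite-diameter curve graph) and that the structure is asymphoric, and the equality of the coarse median rank with $\nu$ comes out in one step. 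You instead get the upper bound from the general ``HHS implies coarse median'' theorem and then argue the lower bound by hand via a standard product region containing a coarsely embedded $\mathbb{Z}^\nu$. That route works but needs slightly more care than you indicate: to produce $\mathbb{Z}^\nu$ (rather than just arbitrarily large quasi-cubes) you need a bi-infinite quasi-geodesic in each factor of the product region, which requires an extra argument (e.g.\ realising a pseudo-Anosov axis in each $\C(X_i)$); alternatively, arbitrarily large quasi-isometrically embedded Euclidean $\nu$-balls, as in Corollary~\ref{corollary: g rank}, already suffice for the coarse median rank lower bound by Bowditch's theory. So the proposal is correct in outline, but the paper's citation of \cite{hhsflats} buys you both bounds simultaneously and avoids the quasi-flat construction, whereas your version makes the lower bound explicit at the cost of an additional (routine but not free) verification.
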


\begin{proof}
This will follow from \cite[Corollary~2.15]{hhsflats}.
The relation of orthogonality here corresponds exactly to disjointness of subsurfaces; more details will be given later in Section~\ref{subsection: 1 to 8}.
Since the image of $\G(S)$ in each $\C(X)$ has infinite diameter, the rank of~$\G(S)$, as defined in \cite[Definition~1.9]{hhsflats}, is the maximal cardinality of a set of pairwise orthogonal elements of~$\X$, which is exactly~$\nu$.
For the same reason, the condition of being ``asymphoric'' in the sense of \cite[Definition~1.13]{hhsflats} is satisfied.
Hence the conclusion of \cite[Corollary~2.15]{hhsflats} is satisfied, giving the required result.
\end{proof}

Corollaries \ref{corollary: g rank} and \ref{corollary: g hyp} follow by \cite[Theorem~1.14]{hhsflats} (see also \cite[Lemma~6.10]{bowmcg}) and \cite[Theorem~2.1]{bowmedian} respectively. 

\begin{corollary} \label{corollary: g rank}
Let $\G(S)$ be a \mygraph{} and let $\nu$ be the maximal cardinality of a set of pairwise disjoint subsurfaces in~$\X$. 
Then the maximal $n$ such that for some fixed $K>0$ and for every $R>0$, there exists a $(K,K)$\hyp{}quasi\hyp{}isometric embedding into $\G(S)$ of the Euclidean ball of dimension $n$ and radius $R$ is \mbox{$n=\nu$}. 
\qed
\end{corollary}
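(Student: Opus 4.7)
The plan is to deduce this from \cite[Theorem~1.14]{hhsflats} applied to the hierarchically hyperbolic structure on $\G(S)$ provided by Theorem~\ref{theorem: g hhs}. That result characterises, for an HHS satisfying the ``asymphoric'' condition of \cite[Definition~1.13]{hhsflats}, the maximal dimension of uniformly quasi-isometrically embedded Euclidean balls as the HHS rank, so the corollary reduces to verifying the hypotheses and identifying the rank with $\nu$.

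First, I would assemble the inputs: Theorem~\ref{theorem: g hhs} gives the HHS structure, with index set $\X$ and projections to the curve graphs $\C(X)$. Exactly as in the proof of Corollary~\ref{corollary: g coarse median}, the images of $\G(S)$ in each $\C(X)$ have infinite diameter, which both gives asphoricity in the sense of \cite[Definition~1.13]{hhsflats} and forces the rank (as in \cite[Definition~1.9]{hhsflats}) to equal the maximal cardinality of a set of pairwise orthogonal elements of $\X$. Since orthogonality in the HHS structure coincides with disjointness of the corresponding subsurfaces (to be verified in Section~\ref{subsection: 1 to 8}), this maximum equals $\nu$.

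With these hypotheses in place, \cite[Theorem~1.14]{hhsflats} yields both bounds at once. The upper bound $n \le \nu$ is the substantive direction and follows by invoking \cite[Theorem~1.14]{hhsflats} (or equivalently the coarse median argument of \cite[Lemma~6.10]{bowmcg}): no uniform family of $(K,K)$-quasi-isometric embeddings of Euclidean $n$-balls can exist in a rank-$\nu$ coarse median/HHS space when $n>\nu$. For the lower bound $n\ge \nu$, I would pick $\nu$ pairwise disjoint subsurfaces $X_1,\dots,X_\nu\in\X$, choose a geodesic segment of length $R$ in each $\C(X_i)$, and use the HHS realisation machinery of~\cite{hhs2} to lift tuples of coordinates to points in $\G(S)$; the distance formula of Corollary~\ref{corollary: g distance formula} then shows that this produces a $(K,K)$-quasi-isometric embedding of $[0,R]^\nu$ with $K$ independent of $R$.

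The main (and essentially only) obstacle is that \cite[Theorem~1.14]{hhsflats} is not quite a black box here: it requires the orthogonality-disjointness correspondence, infinite-diameter image in each factor, and asymphoricity, all of which are already in place by Theorem~\ref{theorem: g hhs} and the discussion in the proof of Corollary~\ref{corollary: g coarse median}. Once those are cited, the corollary is immediate.
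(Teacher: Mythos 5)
Your proposal is correct and follows essentially the same route as the paper, which proves this corollary simply by citing \cite[Theorem~1.14]{hhsflats} (with \cite[Lemma~6.10]{bowmcg} as an alternative reference), relying on the hypotheses already verified in the proof of Corollary~\ref{corollary: g coarse median} (orthogonality equals disjointness, infinite-diameter images, asymphoricity, rank equal to $\nu$). The extra sketch of the lower bound via realisation of geodesics in disjoint factors is consistent with, but already subsumed by, the cited theorem.
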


\begin{corollary} \label{corollary: g hyp}
Let $\G(S)$ be a \mygraph{}.
Suppose that there exists no pair of disjoint subsurfaces in the set $\X$.
Then $\G(S)$ is Gromov hyperbolic. \qed
\end{corollary}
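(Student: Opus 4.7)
The plan is to derive this corollary as a direct consequence of the coarse median structure established in Corollary~\ref{corollary: g coarse median}, combined with Bowditch's hyperbolicity criterion \cite[Theorem~2.1]{bowmedian}, which asserts that a coarse median space of rank at most~$1$ is Gromov hyperbolic.

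First, by Corollary~\ref{corollary: g coarse median}, $\G(S)$ is a coarse median space of rank~$\nu$, where $\nu$ is the maximal cardinality of a set of pairwise disjoint subsurfaces in $\X$. The hypothesis of Corollary~\ref{corollary: g hyp} is precisely that no two subsurfaces in $\X$ are disjoint, so $\nu \le 1$. The generic case is $\nu = 1$, which occurs whenever $\X$ is non-empty (since a single subsurface forms a trivially pairwise-disjoint family of size~$1$).

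Applying \cite[Theorem~2.1]{bowmedian} to the rank-$1$ coarse median space $\G(S)$ then yields Gromov hyperbolicity, as required.

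The only degenerate case to dispose of separately is $\X = \emptyset$; but then the distance formula of Corollary~\ref{corollary: g distance formula} forces $d_{\G(S)}$ to be coarsely bounded, so $\G(S)$ has bounded diameter and is trivially Gromov hyperbolic. No real obstacle arises, since both pieces being invoked have already been set up in the preceding discussion.
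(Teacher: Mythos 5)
Your proposal is correct and matches the paper's own argument: the paper derives this corollary precisely by combining Corollary~\ref{corollary: g coarse median} (rank $\nu\le 1$ under the hypothesis) with \cite[Theorem~2.1]{bowmedian}. Your additional remark disposing of the degenerate case $\X=\emptyset$ is harmless and consistent with the paper's setup.
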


\begin{corollary} \label{corollary: g isoperimetric}
Let $\G(S)$ be a \mygraph{}.
Then $\G(S)$ satisfies a quadratic isoperimetric inequality in the sense of \cite[Proposition~8.2]{bowmedian}. \qed 
\end{corollary}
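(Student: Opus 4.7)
The plan is to combine the coarse median property already established in Corollary~\ref{corollary: g coarse median} with Bowditch's criterion \cite[Proposition~8.2]{bowmedian}, which asserts that any coarse median space of finite rank satisfies a quadratic isoperimetric inequality in the stated sense. So the only thing I need to verify is that $\G(S)$ is a coarse median space of \emph{finite} rank.

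By Corollary~\ref{corollary: g coarse median}, $\G(S)$ is a coarse median space of rank $\nu$, where $\nu$ is the maximal cardinality of a collection of pairwise disjoint subsurfaces in $\X$. Since $S$ is a connected, compact, orientable surface, the number of pairwise disjoint isotopy classes of essential subsurfaces is bounded above by a constant depending only on the genus and the number of boundary components of~$S$. Hence $\nu$ is finite, and \cite[Proposition~8.2]{bowmedian} applies directly to $\G(S)$ to yield the desired quadratic isoperimetric inequality.

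There is essentially no obstacle to overcome at this stage: all of the substantive work has been done earlier, in Theorem~\ref{theorem: g hhs} (establishing the hierarchically hyperbolic structure) and Corollary~\ref{corollary: g coarse median} (which converts finite-rank HHS data into finite-rank coarse median data via the results of \cite{hhsflats}). The present corollary is then an immediate citation, which is why the authors mark it with \verb|\qed| in the statement itself.
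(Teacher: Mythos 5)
Your proposal is correct and matches the paper's (implicit) argument: the corollary is an immediate consequence of the coarse median property established in Corollary~\ref{corollary: g coarse median} together with \cite[Proposition~8.2]{bowmedian}, which is why the paper marks it with \qed{} rather than giving a separate proof. Your extra remark that the rank $\nu$ is finite (being bounded by the number of pairwise disjoint essential subsurfaces of $S$) is a harmless and accurate elaboration of what is already contained in Corollary~\ref{corollary: g coarse median}.
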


\subsubsection*{Acknowledgements.} 
I am grateful to my PhD supervisor, Brian Bowditch, for many invaluable suggestions and interesting conversations, and for thorough comments on earlier versions of this paper. 
I would like to thank Saul Schleimer for helpful discussions and comments, and particularly for suggesting an alternative to the original proof of Lemma~\ref{lemma: coarse lipschitz retract} which facilitated generalising from the original case.
I would also like to thank Jacob Russell for many interesting conversations and comments on this paper, Bert Wiest for discussions on applying the results of this paper to the arc graph (Appendix \ref{appendix: arc graph hhs}), and Kasra Rafi and Henry Wilton for helpful feedback. 
Most of this research was carried out at the University of Warwick, supported by an Engineering and Physical Sciences Research Council Doctoral Award.
Much of the writing was completed at the Fields Institute for Research in Mathematical Sciences, supported by a Fields Postdoctoral Fellowship.

\section{Preliminaries} \label{section: preliminaries}

In this section, we give some background and state the definition of a hierarchically hyperbolic space.

\subsection{Curves and subsurface projection} \label{subsection: subsurface projection}

We say that a simple closed curve in a surface~$S$ is \emph{essential} if it is not homotopic to a point and \emph{non\hyp{}peripheral} if it is not homotopic to a boundary component of~$S$.
In this paper, any curves will be essential, non\hyp{}peripheral simple closed curves.

A multicurve in $S$ is a collection of pairwise disjoint, pairwise non\hyp{}isotopic curves.
Two multicurves $a$ and $b$ are in \emph{minimal position} if the number of intersections between $a$ and $b$ is minimal among all pairs of multicurves $a'$, $b'$ isotopic to $a$, $b$ respectively.
The \emph{intersection number}, $i(a, b)$, of two multicurves $a$ and $b$ is the number of intersections between $a$ and $b$ when they are realised in minimal position.
Unless otherwise stated, we will be considering curves and multicurves up to isotopy.

The \emph{mapping class group}, $\mcg(S)$, of $S$ is the group of isotopy classes of orientation preserving homeomorphisms fixing the boundary of $S$ pointwise (where the isotopies must also fix the boundary pointwise).

We shall be considering several graphs associated to a surface $S$ which have curves or multicurves as vertices.
For notational convenience, we shall usually consider these as discrete sets of vertices with the combinatorial metric induced from the graphs.
Maps between the graphs should be considered as maps between their vertex sets and will not necessarily be graph morphisms.
The importance of connectedness for the graphs we will be considering is the consequence that the distance between any two vertices is finite.

As already stated in the introduction, for $\xi(S) \ge 2$, the curve graph, $\C(S)$, has a vertex for every isotopy class of curves, with an edge joining two distinct vertices whenever they have disjoint representatives.
When $\xi(S) = 1$, there are no pairs of disjoint curves on $S$, so we modify the definition so that there is an edge between two vertices whenever the corresponding curves intersect minimally (once for $S_{1,0}$ and $S_{1,1}$ and twice for~$S_{0,4}$).
When $S=S_{0,3}$, the curve graph is empty.
However, a curve graph (or more precisely, arc graph) is defined for~$S_{0,2}$.
The vertex set is the set of arcs in $S_{0,2}$ joining the two boundary components, up to isotopy fixing the boundary.
Two vertices are connected by an edge whenever the arcs have disjoint interiors.
Hence this graph coarsely measures twists about the core curve of the annulus, and is in fact quasi\hyp{}isometric to $\mathbb{Z}$ (see~\cite[Section~2.4]{mm2}).

An \emph{essential} subsurface of a surface $S$ is a connected subsurface $X$ so that every boundary component of $X$ is either a boundary component of $S$ or an essential, non\hyp{}peripheral curve of $S$.
From now on, the word ``subsurface'' will always refer to an isotopy class of essential subsurfaces.
The \emph{complexity}, $\xi(S)$, of a surface $S=S_{g,b}$ is defined by $\xi(S)=3g+b-3$.
This is the maximal number of curves in a multicurve of $S$, and is strictly decreasing under taking proper subsurfaces.
Given a subsurface $X$ of $S$, we denote by $\partial_S X$ the multicurve of $S$ made up of the boundary components of $X$ which are not in $\partial S$.

Given a surface $S$ and a subsurface $X$ of $S$, we have a \emph{subsurface projection} map $\pi_X$ from $\C(S)$ to the power set $2^{\C(X)}$ of $\C(X)$ (in particular, the image of a point under this map could be empty).
As mentioned above, we here think of curve graphs and similar graphs as discrete sets of vertices.
We briefly recall the definition of the subsurface projection map from \cite[Section~2]{mm2}.

Let $X$ be a subsurface of $S$ of positive complexity, and $\alpha$ a curve realised in minimal position with~$\partial_S X$.
If $\alpha$ is contained in $X$ then $\pi_X(\alpha)=\alpha$, and if $\alpha$ is disjoint from (or peripheral in) $X$ then $\pi_X(\alpha)=\varnothing$.
Otherwise, for each arc $\delta$ of intersection of $\alpha$ with~$X$, we take the boundary components of a small regular neighbourhood of $\delta \cup \partial_S X$ which are non\hyp{}peripheral in~$X$.
The union of these curves over all such $\delta$ is~$\pi_X(\alpha)$.

We may similarly consider a subsurface projection from $\mathcal{G}(S)$ to $\C(X)$ for any complex $\mathcal{G}(S)$ whose vertices are curves or multicurves in $S$, and any subsurface $X$ of~$S$.
The projection of a multicurve is the union of the projections of its component curves.
Again, this is a map to the power set~$2^{\C(X)}$.
However, by \cite[Lemma~2.3]{mm2}, if $X$ is a subsurface of $S$ of positive complexity, and $a$ is a multicurve with non\hyp{}empty subsurface projection to $X$, then $\text{diam}_{\C(X)}(\pi_X(a)) \le 2$.
We define the distance between two sets $C$, $D$ of curves in $X$ by $d_X(C, D)=\text{diam}_{\C(X)}(C \cup D)$.
We usually abbreviate $d_X(\pi_X(A), \pi_X(B))$ by $d_X(A, B)$.

We will not use any details of the subsurface projection to an annulus here.
Recall, however, that, as before, the subsurface projection of a multicurve $c$ to an annulus $A$ is non\hyp{}empty if and only if $c$ cannot be isotoped to be disjoint from~$A$.
In particular, the projection to $A$ of its core curve is empty.

Given a complex $\mathcal{G}(S)$, the subsurfaces of $S$ to which every vertex of $\mathcal{G}(S)$ has non\hyp{}trivial subsurface projection are of particular interest.
We call these subsurfaces \emph{witnesses} for~$\G(S)$.
Notice that except for the case of~$S_{0,3}$,  a multicurve $a$ having non\hyp{}trivial subsurface projection to $X$ is equivalent to the statement that $a$ intersects $X$ non\hyp{}trivially (that is, $a$ cannot be isotoped to be disjoint from~$X$).
However, since the curve graph of $S_{0,3}$ is empty, a subsurface $X \cong S_{0,3}$ cannot be a witness even if every vertex of $\G(S)$ intersects it non\hyp{}trivially.

\subsection{Hierarchically hyperbolic spaces} \label{subsection: hhs}

Hierarchically hyperbolic spaces were defined by Behrstock, Hagen and Sisto in~\cite{hhs1}.
Hierarchical hyperbolicity of a space $\Lambda$ is always with respect to some family of uniformly hyperbolic spaces, with projections from $\Lambda$ to these spaces.
The same authors give an equivalent definition of hierarchically hyperbolic spaces in~\cite{hhs2}, and that is the definition we shall use here.
For an exposition of the topic of hierarchically hyperbolic spaces, see~\cite{hhssurvey}.
The space $\Lambda$ is assumed to be a \emph{quasigeodesic space}, that is, any two points in the space can be connected by a quasigeodesic with uniform constants.
All of the spaces we will deal with in this paper will in fact be geodesic spaces.

We say that $(\Lambda, d_\Lambda)$ is a \emph{hierarchically hyperbolic space} if there exist a constant $\delta \ge 0$, an indexing set $\mathfrak{S}$ and, for each $X \in \mathfrak{S}$, a $\delta$\hyp{}hyperbolic space $(\C(X), d_X)$ such that the following axioms are satisfied.

\textbf{1. Projections.} There exist constants $c$ and $K$ such that for each $X \in \mathfrak{S}$, there is a $(K, K)$\hyp{}coarsely Lipschitz \emph{projection} $\pi_X\colon \Lambda \rightarrow 2^{\C(X)} \sminus \varnothing$ such that the image of each point of~$\Lambda$ has diameter at most $c$ in~$\C(X)$.
Moreover, for each $X \in \mathfrak{S}$, $\pi_X(\Lambda)$ is $K$\hyp{}quasiconvex in~$\C(X)$.

\textbf{2. Nesting.} The set $\mathfrak{S}$ has a partial order $\nest$, and if $\mathfrak{S}$ is non-empty then it contains a unique $\nest$\hyp{}maximal element.
If $X \nest Y$ then we say that $X$ is \emph{nested} in~$Y$.
For all $X \in \mathfrak{S}$, $X \nest X$.
For all $X, Y \in \mathfrak{S}$ such that $X \nestneq Y$ (that is, $X \nest Y$ and $X \ne Y$) there is an associated non\hyp{}empty subset $\pi_Y(X) \subseteq \C(Y)$ with diameter at most~$c$, and a projection map $\pi^Y_X\colon \C(Y) \rightarrow 2^{\C(X)}$.

\textbf{3. Orthogonality.}
There is a symmetric and anti-reflexive relation $\perp$ on $\mathfrak{S}$ called \emph{orthogonality}.
Whenever $X \nest Y$ and $Y \perp Z$, $X \perp Z$.
For every $X \in \mathfrak{S}$ and $X \nest Y$, either there is no $U \nest Y$ such that $U \perp X$, or there exists $Z \nestneq Y$ such that whenever $U \nest Y$ and $U \perp X$, $U \nest Z$.
If $X \perp Y$ then $X$ and $Y$ are not $\nest$\hyp{}comparable, that is, neither is nested in the other.

\textbf{4. Transversality and consistency.}
If $X$ and $Y$ are not orthogonal and neither is nested in the other, then we say $X$ and $Y$ are \emph{transverse}, $X \pitch Y$.
There exists $\kappa \ge 0$ such that whenever $X \pitch Y$ there are non\hyp{}empty sets $\pi_X(Y) \subseteq \C(X)$ and $\pi_Y(X) \subseteq \C(Y)$, each of diameter at most $c$, satisfying, for all $a \in \Lambda$:
\[ \min \lbrace d_X(\pi_X(a), \pi_X(Y)), d_Y(\pi_Y(a), \pi_Y(X)) \rbrace \le \kappa. \]
If $X \nest Y$ and $a \in \Lambda$ then:
\[ \min \lbrace d_Y(\pi_Y(a), \pi_Y(X)), \text{diam}_{\C(X)}(\pi_X(a) \cup \pi^Y_X(\pi_Y(a))) \rbrace \le \kappa. \]
These are called the \emph{consistency inequalities}.
If $X \nest Y$, then for any $Z \in \mathfrak{S}$ such that each of $X$ and $Y$ is either transverse to $Z$ or strictly nested in $Z$, we have $d_Z(\pi_Z(X), \pi_Z(Y)) \le \kappa$.

\textbf{5. Finite complexity.}
There exists $n \ge 0$, called the \emph{complexity} of $\Lambda$ with respect to $\mathfrak{S}$, such that any set of pairwise $\nest$\hyp{}comparable elements of $\mathfrak{S}$ contains at most $n$ elements.

\textbf{6. Large links.}
There exist $\lambda \ge 1$ and $E \ge \max \lbrace c, \kappa \rbrace$ such that the following holds.
Let $X \in \mathfrak{S}$, $a, b \in \Lambda$ and $R = \lambda d_X(\pi_X(a), \pi_X(b)) + \lambda$.
Then either $d_Y(\pi_Y(a), \pi_Y(b)) \le E$ for every $Y \nestneq X$, or there exist $Y_1, \dots, Y_{\lfloor R \rfloor}$ such that for each $1 \le i \le \lfloor R \rfloor$, $Y_i \nestneq X$, and such that for all $Y \nestneq X$, either $Y \nest Y_i$ for some $i$, or $d_Y(\pi_Y(a), \pi_Y(b)) \le E$.
Also, $d_X(\pi_X(a), \pi_X(Y_i)) \le R$ for each \nolinebreak $i$.

\textbf{7. Bounded geodesic image.}
For all $X \in \mathfrak{S}$, and $Y \nestneq X$, and for all geodesics $g$ of $\C(X)$, either $\text{diam}_{\C(Y)}(\pi^X_Y(g)) \le E$ or $g \cap N_{\C(X)}(\pi_X(Y), E) \ne \varnothing$.

\textbf{8. Partial realisation.}
There exists a constant $r$ with the following property.
Let $\lbrace X_j \rbrace$ be a set of pairwise orthogonal elements of $\mathfrak{S}$ and let $\gamma_j \in \pi_{X_j}(\Lambda) \subseteq \C(X_j)$ for each $j$.
Then there exists $a \in \Lambda$ such that:
\begin{itemize}
\item $d_{X_j}(\pi_{X_j}(a), \gamma_j) \le r$ for all $j$,
\item for each $j$ and each $X \in \mathfrak{S}$ such that $X_j \nest X$, $d_X(\pi_X(a), \pi_X(X_j)) \le r$,
\item if $Y \pitch X_j$ for some $j$, then $d_Y(\pi_Y(a), \pi_Y(X_j)) \le r$.
\end{itemize}

\textbf{9. Uniqueness.}
For all $K \ge 0$, there exists $K'$ such that if $a, b \in \Lambda$ satisfy \linebreak[4] $d_{X}(\pi_X(a), \pi_X(b)) \le K$ for all $X \in \mathfrak{S}$, then $d_{\Lambda}(a, b) \le K'$.

\subsection{Definition of \mygraph} \label{subsection: mygraph definition}

We now specify those graphs to which our results will apply.

\begin{definition} \label{definition: mygraph}
Let $S$ be a connected, compact, orientable surface.
A graph $\G(S)$ associated to $S$, with the combinatorial metric, is a \emph{\mygraph{}} if it satisfies the following properties.
\begin{enumerate}
\item The graph $\G(S)$ is connected.
\item Each vertex of $\G(S)$ represents a multicurve in $S$.
\item The action of $\mcg(S)$ on the surface $S$ induces an isometric action of $\mcg(S)$ on $\G(S)$.
\item There exists $R$ such that for any pair of adjacent vertices $a$, $b$ of $\G(S)$, $i(a, b) \le R$.
\item The set of witnesses for $\G(S)$ does not contain annuli.
\end{enumerate}
\end{definition}

We now give some examples of graphs associated to surfaces which satisfy these conditions.
Note that the set $\X$ referred to in Theorem~\ref{theorem: g hhs} is the set of witnesses for~$\G(S)$.

\begin{example}
The curve graph, $\C(S)$, is a \mygraph{} for every surface of positive complexity.
The only witness for $\C(S)$ is $S$ itself.
The subsurface projection from $\C(S)$ to itself is the identity map, and this gives the trivial hierarchically hyperbolic structure which results from the hyperbolicity of $\C(S)$.
Hence the conclusion of Theorem~\ref{theorem: g hhs} is nothing new in this example.
\end{example}

\begin{example}
The pants graph, $\mathcal{P}(S)$, is a \mygraph{} for every surface of positive complexity.
The set of witnesses is the set of all positive complexity subsurfaces.
The fact that this gives a hierarchically hyperbolic structure on $\mathcal{P}(S)$ is noted in \cite[Theorem~G]{hhs1}.
\end{example}

\begin{example}
The separating curve graph, $\sep(S)$, is the full subgraph of $\C(S)$ spanned by separating curves, whenever this is connected.
In the cases of $S_{1,2}$, $S_{2,0}$ and $S_{2,1}$, this subgraph of the curve graph is non\hyp{}empty but disconnected.
However, it is standard to modify the definition so that two curves are adjacent whenever their intersection number is minimal among all pairs of distinct separating curves, and this does give connected graphs.
We then have that $\sep(S)$ is a \mygraph{} whenever it is non\hyp{}empty.
Notice that a subsurface of $S$ does not contain any separating curve precisely when it has genus~0 and contains at most one boundary component of~$S$.
Hence, a subsurface $X$ of $S$ is a witness for $\sep(S)$ whenever every component of the complement of $X$ is a planar subsurface containing at most one curve of $\partial S$.
This in particular means that the possibility for two witnesses to be disjoint is very restricted.
When $S$ has at least three boundary components, there are no pairs of disjoint witnesses for $\sep(S)$, and so by Corollary~\ref{corollary: g hyp}, $\sep(S)$ is hyperbolic.
When $S=S_{g,b}$ with $g\ge3$ and $b\le2$, and when $S=S_{2,2}$, there exist pairs of disjoint witnesses (see Figure~\ref{figure: disjointholes}).
However, there is no triple of pairwise disjoint witnesses.
Hence, in this case $\sep(S)$ has rank~2 in the sense of Corollary~\ref{corollary: g rank}.
In~\cite{russell}, this hierarchically hyperbolic structure on $\sep(S)$ is used to prove the relative hyperbolicity of $\sep(S_{g,0})$ when $g\ge3$ and $\sep(S_{g,2})$ when~$g\ge2$.
A more detailed study of the geometry of the separating curve graph in the various cases will appear in forthcoming work with Russell~\cite{russellvokes}.
\end{example}

\begin{figure}[h!]
\centering
\begin{subfigure}[b]{0.22\textwidth}
\centering
\includegraphics[width=\textwidth]{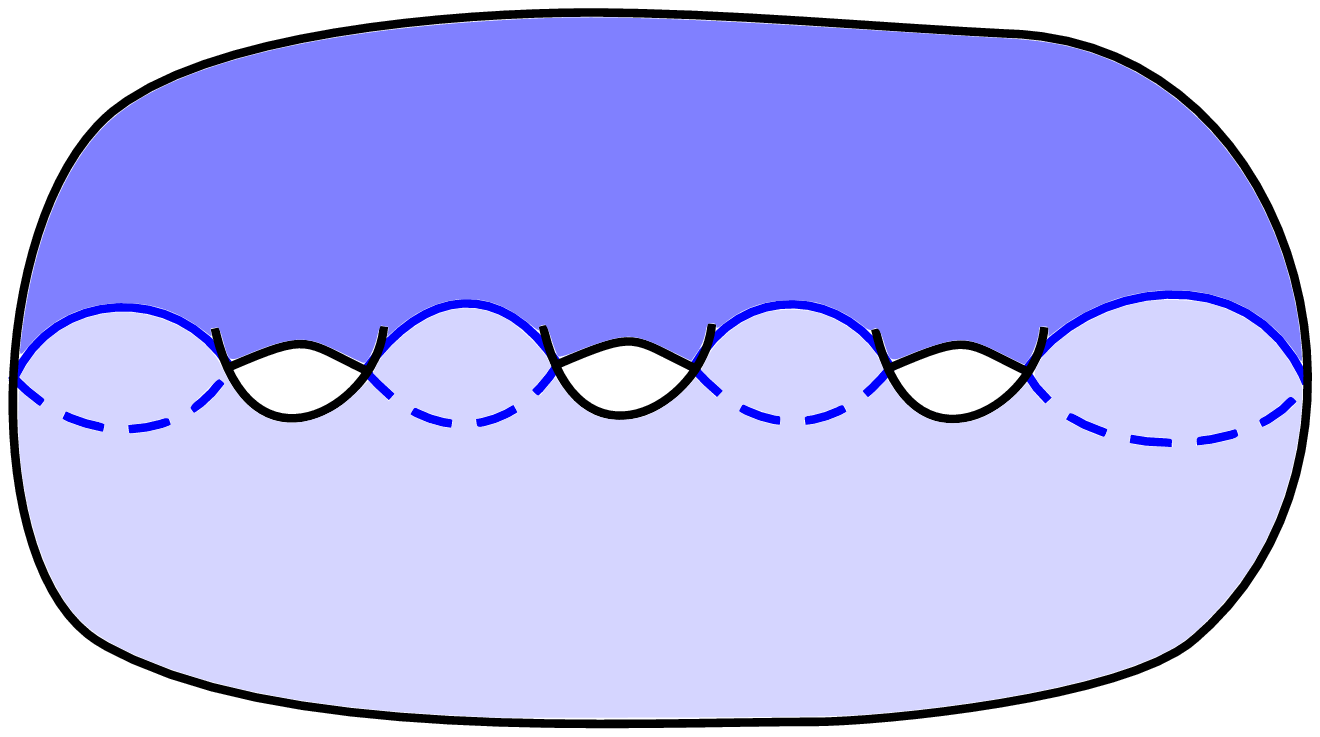}
\subcaption{}
\label{figure: disjointholes1}
\end{subfigure}
\enspace
\begin{subfigure}[b]{0.23\textwidth}
\centering
\includegraphics[width=\textwidth]{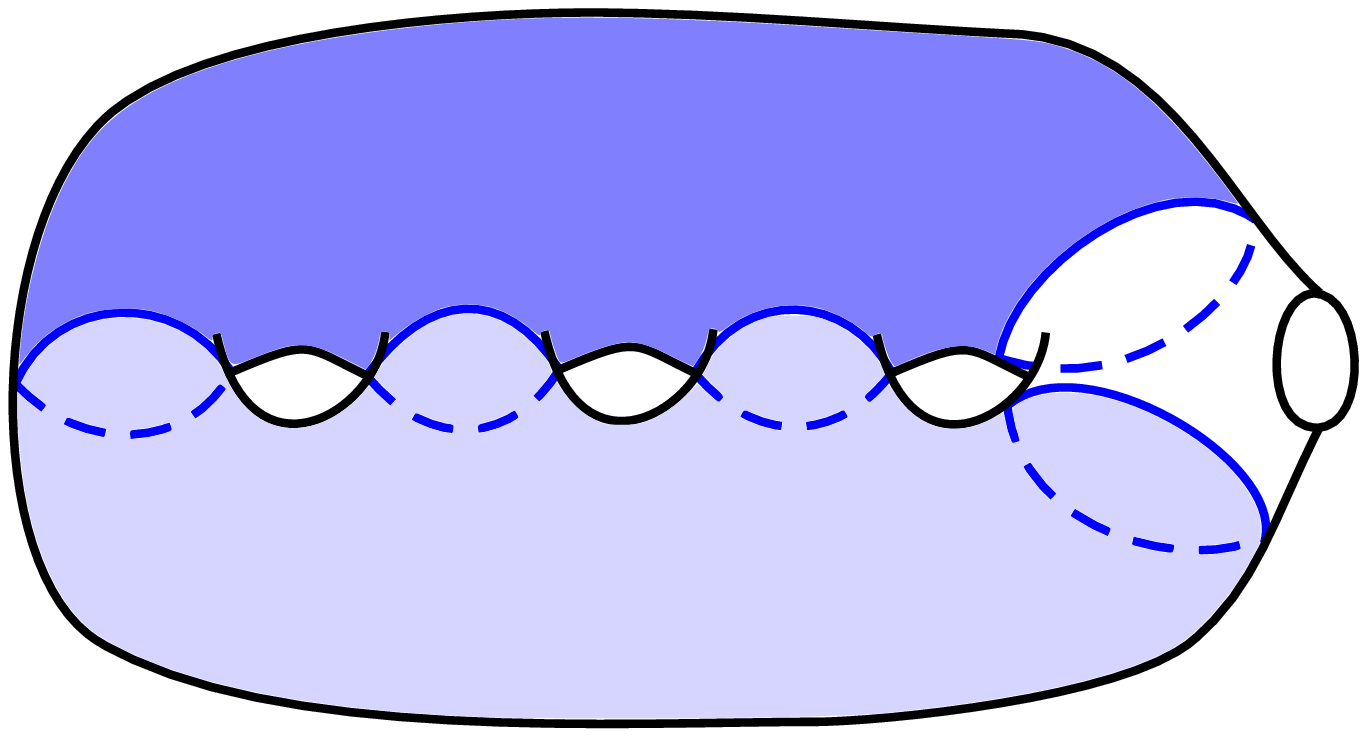}
\subcaption{}
\label{figure: disjointholes2}
\end{subfigure}
\enspace
\begin{subfigure}[b]{0.23\textwidth}
\centering
\includegraphics[width=\textwidth]{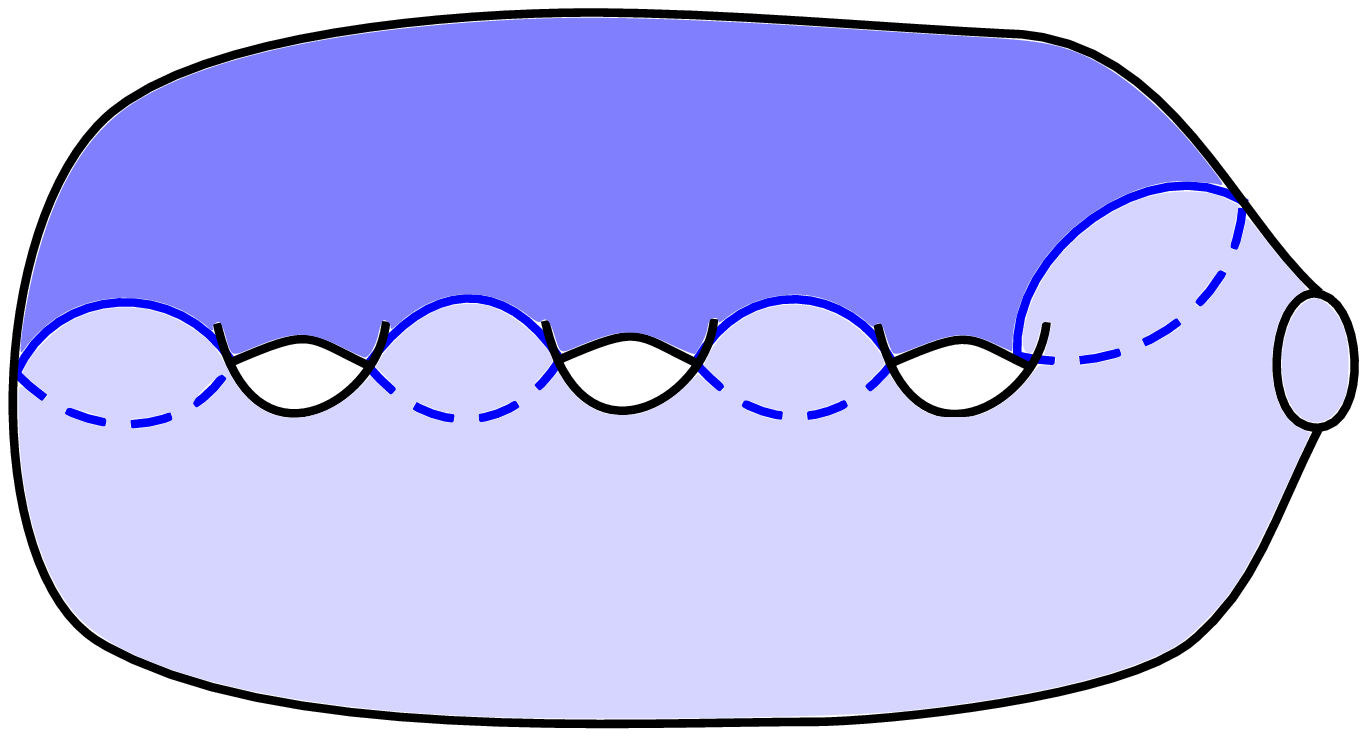}
\subcaption{}
\label{figure: disjointholes2a}
\end{subfigure}
\enspace
\begin{subfigure}[b]{0.22\textwidth}
\centering
\includegraphics[width=\textwidth]{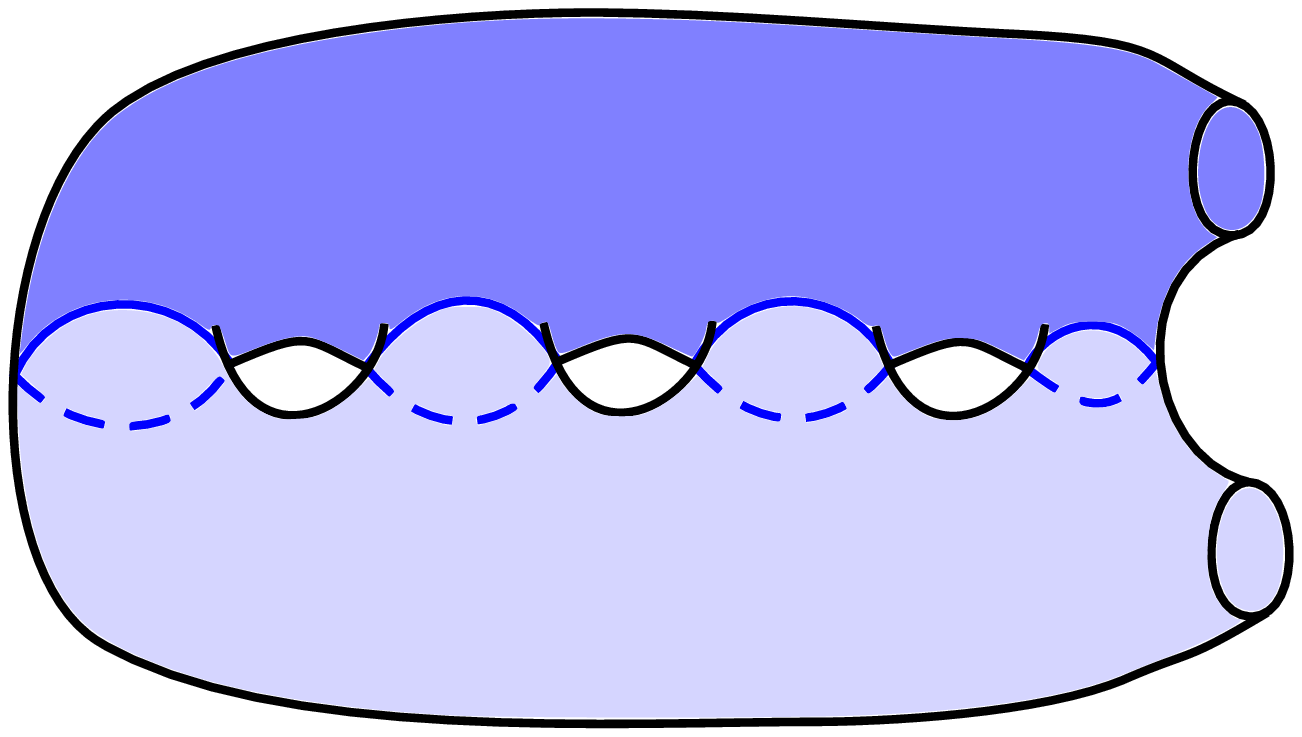}
\subcaption{}
\label{figure: disjointholes3}
\end{subfigure}
\caption{The possibilities for pairs of disjoint witnesses for $\sep(S)$, up to $\mcg(S)$, for $S_3$, $S_{3, 1}$ and $S_{3,2}$.}
\label{figure: disjointholes}
\end{figure}

\begin{example}
The non-separating curve graph, $\text{Nonsep}(S)$, is the full subgraph of $\C(S)$ spanned by non\hyp{}separating curves, and is also a \mygraph{} whenever it is connected (that is, when the genus of $S$ is at least 2).
We can also obtain a connected graph, which is a \mygraph{}, in the genus~1 case by allowing curves to intersect once.
The set of witnesses for $\text{Nonsep}(S)$ is the set containing each subsurface $X$ which has the same genus as $S$, in other words, so that every component of $S \sminus X$ has genus 0 and meets $X$ along a single separating curve.
In particular, when $S$ has at most one boundary component, the only witness is $S$ itself, and $\text{Nonsep}(S)$ is quasi\hyp{}isometric to $\C(S)$ (this observation predates the terminology of hierarchically hyperbolic spaces; see \cite[Exercise~2.39]{sch}).
This hierarchically hyperbolic structure on $\text{Nonsep}(S)$ has been constructed independently by Alexander Rasmussen.
Note that it was already known that $\text{Nonsep}(S)$ is hyperbolic~\cite{hamnonsep,rasmussen}, so that it also has a trivial hierarchically hyperbolic structure with respect to the identity map to itself.
\end{example}

\begin{example}
The cut system graph defined in~\cite{hatchthu} is a \mygraph{}.
The set of witnesses is the set of subsurfaces with positive genus.
The conclusion that the cut system graph is hierarchically hyperbolic with respect to subsurface projections to witnesses appears to be a new observation, though it is closely related to results of Ma~\cite{mahierarchy}.
\end{example}

\begin{example}
The Torelli geometry defined in \cite{farbivanov} can be considered as a \mygraph{} by taking the 1\hyp{}skeleton with the combinatorial metric and forgetting the extra markings which distinguish vertices of different topological types.
\end{example}

\begin{example}
In Appendix~\ref{appendix: arc graph hhs}, we define a quasi\hyp{}isometry from the arc graph to a certain \mygraph{}, allowing us to deduce a hierarchically hyperbolic structure on the arc graph with respect to subsurface projections to witnesses.
\end{example}

\section{Hierarchical hyperbolicity of an associated graph} \label{section: ksep}

In this section, we associate a graph $\ksep_\G(S)$ to each \mygraph{} $\G(S)$, and prove that the graph $\ksep_\G(S)$ is hierarchically hyperbolic.
We shall show in Section~\ref{section: sep} that the graphs $\G(S)$ and $\ksep_\G(S)$ are quasi\hyp{}isometric, and use this to deduce Theorem~\ref{theorem: g hhs}.

\subsection{Definition of $\ksep_\G(S)$} \label{subsection: ksep def}

Let $S$ be a surface and $\G(S)$ a \mygraph{}.
We denote by $\X$ the set of witnesses for $\G(S)$.
Note that when we remove a multicurve $a$ from~$S$, we will really want to remove a regular open neighbourhood in order to obtain compact subsurfaces.
However, we shall abuse notation and simply write $S \sminus a$.
Similarly, for $X$ a subsurface of~$S$, we will write $S \sminus X$ when we really mean $\overline{S \sminus X}$.

\begin{definition} \label{definition: ksep}
The graph $\ksep_\G(S)$ has:
\begin{itemize}
\item a vertex for each multicurve $a$ in $S$ such that every component of $S \sminus a$ is not in $\X$,
\item an edge between vertices $a$ and $b$ if one of the following holds:
\begin{enumerate}
\item $b$ is obtained either by adding a single curve to $a$ or by removing a single curve from $a$,
\item $b$ is obtained from $a$ by a \emph{flip move} as defined below.
\end{enumerate}
\end{itemize}
\end{definition}

\begin{definition} \label{definition: flip move}
Let $a$ be a vertex of $\ksep_\G(S)$.
A \emph{flip move} from $a$ to another multicurve $b$ is defined as follows.
\begin{enumerate}
\item Choose a curve $\alpha$ of $a$.
\item Let $X_\alpha$ be the component of $S \sminus (a \sminus \alpha)$ containing $\alpha$.
\item Choose a curve $\beta$ in $X_\alpha$ such that $\beta$ is adjacent to $\alpha$ in $\C(X_\alpha)$.
\item Let $b = (a \sminus \alpha) \cup \beta$.
\end{enumerate}
\end{definition}

In the case that performing a flip move or removing a curve from $a$ yields a multicurve that is not a vertex of $\ksep_\G(S)$, then this move will not correspond to an edge of~$\ksep_\G(S)$.
Adding a curve to a vertex of $\ksep_\G(S)$ will always give another vertex, since any subsurface containing a witness for $\G(S)$ is itself a witness for~$\G(S)$.

Observe that adjacent vertices of $\ksep_\G(S)$ will intersect at most twice.
Notice also that every vertex of $\G(S)$ is also a vertex of $\ksep_\G(S)$.
Moreover, every pants decomposition of $S$ is a vertex of $\ksep_\G(S)$, since every witness for $\G(S)$ has positive complexity.

\begin{claim} \label{claim: ksep connected}
The graph $\ksep_\G(S)$ is connected.
\end{claim}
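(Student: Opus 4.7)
The plan is to reduce connectivity to the well\hyp{}known connectedness of the pants graph. I claim: (i)~every vertex $a$ of $\ksep_\G(S)$ can be extended, via add\hyp{}a\hyp{}curve edges, to a pants decomposition of~$S$; and (ii)~any two pants decompositions of $S$ are joined in $\ksep_\G(S)$ by a sequence of flip moves. Since every pants decomposition is a vertex of $\ksep_\G(S)$ (each component of its complement is a pair of pants, which has complexity $0$ and hence is not a witness), these two facts together give connectivity.

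For~(i), the crucial observation is that any subsurface containing a witness is itself a witness: if $W \subseteq Y$ with $W$ a witness, then every vertex of $\G(S)$ intersects $W$ non\hyp{}trivially, hence intersects $Y$ non\hyp{}trivially, so $Y$ is a witness too (provided $Y \not\cong S_{0,3}$, but in that case $W$ would be an annulus or $S_{0,3}$ itself, neither of which is a witness by property~(5) of Definition~\ref{definition: mygraph} and the convention on thrice\hyp{}punctured spheres). Contrapositively, if a component $Y$ of $S \sminus a$ is not a witness, then neither are the smaller subsurfaces obtained by cutting $Y$ with an additional curve, while the other components of the complement are unchanged. So adding a curve to any vertex of $\ksep_\G(S)$ produces another vertex, and we may extend $a$ one curve at a time to a pants decomposition, remaining inside $\ksep_\G(S)$ throughout.

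For~(ii), I would invoke the classical theorem of Hatcher and Thurston that the pants graph of $S$ is connected, where two pants decompositions are adjacent whenever they differ by an elementary move: replacing a curve $\alpha$ by a curve $\beta$ intersecting it minimally inside the unique complexity\hyp{}$1$ complementary component containing~$\alpha$. Given a pants decomposition $a$ and a curve $\alpha \in a$, the component $X_\alpha$ of $S \sminus (a \sminus \alpha)$ containing $\alpha$ has complexity exactly~$1$, and by the convention for curve graphs of complexity\hyp{}one surfaces, adjacency in $\C(X_\alpha)$ coincides with minimal intersection. Hence each elementary pants move is a flip move in the sense of Definition~\ref{definition: flip move}, and every intermediate vertex is again a pants decomposition; the connecting path in the pants graph therefore lifts directly to a path in $\ksep_\G(S)$.

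The main obstacle, though a mild one, is the careful preservation argument in~(i): one must be sure that cutting a non\hyp{}witness component with a new curve cannot accidentally produce a witness, and in particular that the borderline cases of thrice\hyp{}punctured spheres and annular subsurfaces do not break the inclusion argument. This is precisely where property~(5) of the \mygraph{} definition is used.
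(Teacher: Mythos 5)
Your proof is correct and follows essentially the same route as the paper: extend any vertex to a pants decomposition via add\hyp{}a\hyp{}curve edges (using the fact that any subsurface containing a witness is a witness), then connect pants decompositions by flip moves via the Hatcher--Thurston connectedness of the pants graph. The paper states this argument more tersely, but your elaboration of why adding a curve preserves the vertex condition, including the $S_{0,3}$ and annulus edge cases, is exactly the justification the paper relies on.
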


Each vertex of $\ksep_\G(S)$ is connected to a pants decomposition by adding curves one by one.
Moreover, a pants move can be realised as a flip move in $\ksep_\G(S)$.
Since the pants graph is connected \cite{hatcherpants}, this implies that $\ksep_\G(S)$ is connected.

\qedclaim

As usual, from now on we shall treat $\ksep_\G(S)$ as a discrete set of vertices equipped with the combinatorial metric induced from the graph.

\begin{claim} \label{claim: ksep sep holes}
Let $\mathfrak{Z}$ be the set of witnesses for $\ksep_\G(S)$.
Then $\mathfrak{Z}=\X$.
\end{claim}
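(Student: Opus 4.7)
The plan is to prove the two containments separately.

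For the inclusion $\mathfrak{Z} \subseteq \X$, I would exploit the observation (already noted in the paper) that every vertex of $\G(S)$ is a vertex of $\ksep_\G(S)$. This is because if $a \in \G(S)$ and some component $Y$ of $S\sminus a$ lay in $\X$, then $a$ would be disjoint from $Y$, and since $Y$ is a witness (hence not an annulus by property (5) of a \mygraph{} and not $S_{0,3}$), this would mean $a$ has empty projection to $Y$, contradicting $Y \in \X$. Granted this, if $X \in \mathfrak{Z}$ then every vertex of $\G(S)$, being also a vertex of $\ksep_\G(S)$, projects non-trivially to $X$; together with $X \ne S_{0,3}$ (which is built into the definition of a witness) this gives $X \in \X$.

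For the inclusion $\X \subseteq \mathfrak{Z}$, I would take $X \in \X$ and a vertex $a$ of $\ksep_\G(S)$ and argue by contradiction that $a$ has non-trivial projection to $X$. Since $X$ is neither an annulus (by property (5)) nor $S_{0,3}$, having non-trivial projection to $X$ is equivalent to being unable to isotope $a$ off $X$. So suppose $a$ can be made disjoint from $X$; then $X$ sits inside some component $Y$ of $S \sminus a$. I would then show that any subsurface $Y$ containing a witness $X$ is itself in $\X$: any curve $c$ intersecting $X$ essentially also intersects $Y$ essentially, and $Y \ne S_{0,3}$ because $\xi(Y) \ge \xi(X) \ge 1$, so every vertex of $\G(S)$ projects non-trivially to $Y$. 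This gives $Y \in \X$, contradicting the defining property of a vertex of $\ksep_\G(S)$.

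There is no real obstacle here; the only subtlety is keeping track of the equivalence between ``trivial projection'' and ``isotopable disjoint'', which requires knowing $X$ is not $S_{0,3}$ (immediate from the witness definition) and not an annulus (supplied by property (5) of a \mygraph{}). Both directions then reduce to a one-line contradiction using that $\X$ is upward-closed under inclusion of subsurfaces.
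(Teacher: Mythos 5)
Your proof is correct and follows essentially the same route as the paper: the containment $\mathfrak{Z}\subseteq\X$ comes from the fact that every vertex of $\G(S)$ is a vertex of $\ksep_\G(S)$, and the containment $\X\subseteq\mathfrak{Z}$ comes from noting that a vertex of $\ksep_\G(S)$ missing $X\in\X$ would leave $X$ inside a complementary component, which would then itself be a witness, contradicting the definition of $\ksep_\G(S)$. The extra care you take over the equivalence of ``trivial projection'' and ``isotopable off'' and over justifying that subsurfaces containing witnesses are witnesses is consistent with the remarks the paper makes elsewhere in Section~\ref{subsection: ksep def}.
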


Firstly, $\mathfrak{Z}$ is contained in $\X$ since each vertex of $\G(S)$ is a vertex of $\ksep_\G(S)$.
Suppose $X$ is in $\X$ and $a$ is a vertex of $\ksep_\G(S)$.
If $a$ does not cut $X$ then $X$ is contained in a single component of $S \sminus a$.
But then this component of $S \sminus a$ is in~$\X$, which contradicts that $a$ is a vertex of $\ksep_\G(S)$.

\qedclaim

Note that this means in particular that if $\G(S)$ and $\G'(S)$ are two \mygraph{}s with the same set of witnesses then the graphs $\ksep_\G(S)$ and $\ksep_{\G'}(S)$ are the same.
The graph $\ksep_\G(S)$ is in a sense the ``biggest'' graph with this set of witnesses.

In Sections \ref{subsection: 1 to 8} and \ref{subsection: axiom 9}, we shall prove the following theorem.

\begin{theorem} \label{theorem: ksep hhs}
Let $S$ be a surface and $\G(S)$ a \mygraph{}.
Let $\X$ be the set of witnesses for $\G(S)$.
Then the graph $\ksep_\G(S)$ is a hierarchically hyperbolic space with respect to subsurface projections to the curve graphs of subsurfaces in~$\X$.
\end{theorem}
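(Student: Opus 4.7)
I would verify the nine HHS axioms of Section~\ref{subsection: hhs} with indexing set $\mathfrak{S} \defeq \X$, hyperbolic spaces $\C(X)$ (uniformly hyperbolic by Hensel--Przytycki--Webb/Aougab), nesting $\nest$ given by inclusion of subsurfaces up to isotopy, unique $\nest$-maximal element $S$ (assuming $\G(S)$ is non-trivial), orthogonality $\perp$ given by disjointness of subsurfaces, and projections $\pi_X$ given by the usual Masur--Minsky subsurface projections (which are non-empty by Claim~\ref{claim: ksep sep holes}). For Axiom~1, adjacent vertices of $\ksep_\G(S)$ intersect in at most two points (a flip move changes one curve to a once- or twice-intersecting neighbour; add/remove moves change intersection by~$0$), so $\pi_X$ is uniformly coarsely Lipschitz by the standard Masur--Minsky bound relating intersection to projection diameter; quasiconvexity of $\pi_X(\ksep_\G(S))$ in $\C(X)$ is immediate since any curve in $X$ extends to a pants decomposition of $S$, which is a vertex of $\ksep_\G(S)$. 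Axiom~2 is built in by setting $\pi_Y(X) \defeq \partial_Y X$ and taking $\pi^Y_X$ to be the usual subsurface projection map between curve graphs. Axiom~5 is immediate from the bound $\xi(S)$ on the length of chains of properly nested essential subsurfaces.

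For Axiom~3, the only non-trivial part is the container property; given $X \nest Y$ in $\X$, I would take $Z$ to be the (isotopy class of the) union of all components of $Y \sminus X$ that lie in $\X$, using that $\X$ is upward-closed under taking subsurfaces-of-$S$ containing a witness to show $Z \in \X$ whenever it is non-empty. Axiom~4 combines Behrstock's inequality (the transverse consistency inequality, proved for subsurface projections in~\cite{mm2}) with the nested consistency inequality (a consequence of bounded geodesic image); the third clause, that $d_Z(\pi_Z(X), \pi_Z(Y)) \le \kappa$ when $X \nest Y$ and $Z$ is comparable to both via $\pitch$ or strict nesting, reduces to the fact that $\partial X \cup \partial Y$ has bounded diameter in $\C(Z)$ by standard surface topology. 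Axioms~6 and~7 are essentially the statements of Masur--Minsky's large link lemma and bounded geodesic image theorem, respectively, specialised to subsurface projections of multicurves.

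Axiom~8 (partial realisation) requires a construction tailored to $\ksep_\G(S)$: given pairwise disjoint $X_j \in \X$ and $\gamma_j \in \C(X_j)$, I would form the multicurve $a \defeq \bigl( \bigcup_j \gamma_j \bigr) \cup \bigl( \bigcup_j \partial_S X_j \bigr) \cup P$, where $P$ is a pants decomposition of the complement $S \sminus \bigcup_j X_j$. Filling the complement with pants curves guarantees that every component of $S \sminus a$ has complexity $0$, hence lies outside $\X$ (witnesses have positive complexity), so $a$ is a vertex of $\ksep_\G(S)$. The three inequalities in Axiom~8 then follow from standard estimates on subsurface projection: $\pi_{X_j}(a)$ contains $\gamma_j$; $\pi_X(a) \asymp \pi_X(X_j)$ when $X_j \nest X$ (both coarsely equal $\pi_X(\partial X_j)$); and transversality similarly gives the last inequality.

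The main obstacle, as always, is Axiom~9 (uniqueness). The approach I would take is to lift $a, b \in \ksep_\G(S)$ to Masur--Minsky markings $m_a, m_b \in \M(S)$ and invoke the distance formula for $\M(S)$. The delicate point is controlling subsurface projections to subsurfaces $X \notin \X$, and to annuli, which are not witnesses and so receive no a priori bound from the hypothesis. For annuli and for non-witness $X$, one can choose the transversals of $m_a$ and $m_b$ so that their annular projections agree, and use that any non-witness $X$ is contained in some component of $S \sminus a$ that is not in $\X$, so projections of $b$ to $X$ can be bounded in terms of projections of $b$ to the witnesses containing $\partial X$ (by consistency and bounded geodesic image). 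Together these give bounded $\M(S)$-projection distance at every subsurface, hence $d_{\M(S)}(m_a, m_b)$ is bounded by Masur--Minsky uniqueness, and finally the natural Lipschitz map $\M(S) \to \ksep_\G(S)$ (forgetting transversals and deleting redundant curves) bounds $d_{\ksep_\G(S)}(a,b)$. I expect this last step to be where the coarse Lipschitz retract lemma mentioned in the acknowledgments enters.
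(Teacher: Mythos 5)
Your treatment of Axioms 1--8 is essentially the paper's: same indexing set $\X$, same nesting/orthogonality relations, and the same standard inputs (uniform hyperbolicity of curve graphs, Behrstock's inequality, bounded geodesic image, the large links argument, and completing $\bigcup_j \gamma_j \cup \bigcup_j \partial_S X_j$ to a pants decomposition for partial realisation). The substance of the proof, however, is Axiom 9, and there your approach has a genuine gap.

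The hypothesis of Axiom 9 bounds $d_X(a,b)$ only for witnesses $X \in \X$, whereas the Masur--Minsky uniqueness theorem for $\M(S)$ requires bounds over \emph{all} subsurfaces, including non-witnesses of positive complexity. Your proposed repair rests on the claim that a non-witness $X$ is contained in a component of $S \sminus a$; this is false. A non-witness is merely missed by \emph{some} vertex of $\G(S)$, not by $a$, and its projection distance between $a$ and $b$ can be arbitrarily large while all witness projections stay uniformly bounded. Concretely, take $\G = \sep(S_3)$ and let $Y$ be a one-holed torus (not a witness, since $S_3 \sminus Y$ has genus $2$). Let $a$ be a pants decomposition containing $\partial_S Y$ and a curve $\gamma \subset Y$, and let $b = (a \sminus \gamma) \cup f(\gamma)$ for $f$ supported in $Y$ with $d_Y(\gamma, f(\gamma))$ huge. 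No subsurface of $Y$ is a witness (witnesses are closed under passing to larger subsurfaces), so every witness meets the common sub-multicurve $a \sminus \gamma$ and hence $d_X(a,b) \le 4$ for all $X \in \X$; yet $d_Y(m_a, m_b)$ is huge for \emph{any} markings $m_a \supseteq a$, $m_b \supseteq b$, since $\gamma \subseteq m_a$ and $f(\gamma) \subseteq m_b$ already force it. No choice of transversals can fix this, so $d_{\M(S)}(m_a, m_b)$ is unbounded and the composite bound $d_{\ksep_\G(S)}(a,b) \lesssim d_{\M(S)}(m_a,m_b)$ is vacuous (here $d_{\ksep_\G(S)}(a,b) \le 2$ via $a \to a \sminus \gamma \to b$). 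This is exactly why the paper does not route through $\M(S)$: Proposition~\ref{proposition: ksep bound} is proved by an inductive annulus-system construction (after Bowditch), adding tight ladders only in bricks whose base surfaces lie in $\X$, so that non-witness subsurfaces are never resolved; the length of the resulting path in $\ksep_\G(S)$ is then bounded via Claims~\ref{claim: ladder increase}--\ref{claim: bounded length}. Incidentally, Lemma~\ref{lemma: coarse lipschitz retract} belongs to the proof that $\G(S)$ and $\ksep_\G(S)$ are quasi-isometric (Section~\ref{section: sep}), not to Axiom 9. A minor further point: in Axiom 3 your container $Z$, defined as a union of components of $Y \sminus X$, need not be connected and so need not be an element of $\X$.
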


\subsection{Verification of Axioms 1--8} \label{subsection: 1 to 8}

As above, let $\X$ be the set of witnesses for $\ksep_\G(S)$ (or equivalently for $\G(S)$).
We will verify that $\ksep_\G(S)$ satisfies the axioms for hierarchical hyperbolicity (see Section \ref{subsection: hhs}) for $\mathfrak{S}=\X$.
For each~$X \in \X$, the $\delta$\hyp{}hyperbolic space $\C(X)$ is the curve graph of~$X$.
The constant $\delta$ need not depend on the surface~$S$, since curve graphs are uniformly hyperbolic~\cite{aouhyp,bowhyp,crs,hpw}.
Most of the axioms follow easily from known results on subsurface projections.
The only significant new work needed is the verification of Axiom~9.
We reserve this for a separate section, and verify Axioms 1 to~8 below.

\newcase

\textbf{1. Projections.}
Let $\pi_X\colon \ksep_\G(S) \rightarrow 2^{\C(X)}$ be the usual subsurface projection.
The image of a vertex is never empty since every vertex of $\ksep_\G(S)$ intersects each $X$ in~$\X$.
Let $a$ and $b$ be at distance 1 in~$\ksep_\G(S)$.
First suppose $b$ is obtained from $a$ by adding or removing a curve or by a flip move in a subsurface of complexity at least~2.
Then $a \cup b$ is a multicurve, so its projection to any $\C(X)$ for $X \in \X$ has diameter at most 2 by \cite[Lemma~2.3]{mm2}. 
Suppose $a$ and $b$ are connected by a flip move in a subsurface $X_\alpha$ such that $\xi(X_\alpha)=1$.
If $X = X_\alpha$, then the projection of $a \cup b$ to $\C(X)$ is two adjacent curves and has diameter 1.
Suppose $X \ne X_\alpha$.
Any subsurface of $X_\alpha$ has non\hyp{}positive complexity so cannot be in~$X$.
Hence some curve of $\partial_S X_\alpha$ intersects $X$.
This curve is disjoint from every other curve of $a \cup b$ so the diameter of the projection is at most~4.
Hence, the projection $\pi_X$ is 4\hyp{}Lipschitz.

In order to prove that, for some $K$, the image of each $\pi_X$ is $K$\hyp{}quasiconvex in $\C(X)$, note that this will in particular be true if $N_{\C(X)}(\pi_X(\ksep_\G(S)), K)=\C(X)$.
Now, any curve in $S$ appears in some multicurve which is a vertex of $\ksep_\G(S)$, since every pants decomposition is a vertex.
In particular, every curve in $X$ appears in some vertex of $\ksep_\G(S)$, and hence in the image of this vertex under $\pi_X$.
Hence, the map $\pi_X$ is in fact surjective.

\newcase

\textbf{2. Nesting.}
The partial order on $\X$ is inclusion of subsurfaces, with $X \nest Y$ if $X$ is contained in $Y$.
The unique $\nest$\hyp{}maximal element is $S$.
If $X \nestneq Y$, then we can take $\pi_Y(X) = \partial_Y X \subset \C(Y)$, that is, all boundary curves of $X$ which are non\hyp{}peripheral in $Y$.
This has diameter at most 1 in $\C(Y)$ as the curves are pairwise disjoint.
The projection $\pi^Y_X\colon \C(Y) \rightarrow 2^{\C(X)}$ is the subsurface projection from $\C(Y)$ to $2^{\C(X)}$.

\newcase

\textbf{3. Orthogonality.}
The orthogonality relation $\perp$ on $\X$ is disjointness of subsurfaces.
If $Z$ is disjoint from $Y$ then it is disjoint from any subsurface of $Y$.
Suppose $X \in \X$ and $Y \nest X$.
Then either no other subsurface of $X$ disjoint from $Y$ is in $\X$, or the complement $Z=X \sminus Y$ is in $\X$ and any $U \nest X$ which is disjoint from $Y$ is nested in $Z$.
Finally, if $X$ and $Y$ are disjoint then neither is nested in the other.

\newcase

\textbf{4. Transversality and consistency.}
Two subsurfaces $X$ and $Y$ in $\X$ are transverse, $X \pitch Y$, if they are neither disjoint nor nested.
If $X \pitch Y$, let $\pi_X(Y)$ be the subsurface projection of $\partial_S Y \subset \C(S)$ to $\C(X)$, and similarly for $\pi_Y(X)$.
These each have diameter at most 2 by \cite[Lemma~2.3]{mm2}.
By Behrstock's lemma \cite[Theorem~4.3]{beh}, for each $S$ there exists $\kappa$ such that for any $X \pitch Y$ and any multicurve $a$ projecting to both (and hence any vertex $a$ of $\ksep_\G(S)$),
\[ \min \lbrace d_X(\pi_X(a), \pi_X(Y)), d_Y(\pi_Y(a), \pi_Y(X)) \rbrace \le \kappa. \]
For a more elementary proof due to Leininger, with a uniform value of $\kappa$, see \cite[Lemma~2.13]{man}.
Given $X \nest Y$, and $a$ in $\ksep_\G(S)$ consider
\[ \min \lbrace d_Y(\pi_Y(a), \pi_Y(X)), \text{diam}_{\C(X)}(\pi_X(a) \cup \pi^Y_X(\pi_Y(a))) \rbrace. \]
The second term compares projecting $a$ directly to $\C(X)$ from $\ksep_\G(S)$ and projecting $a$ first to $\C(Y)$ and then to~$\C(X)$.
This gives coarsely the same result, so that this term is bounded.
Finally, if $X \nest Y$, then the union of their boundary components is a multicurve in~$\C(S)$, so for any $Z \in \X$ such that each of $X$ and $Y$ is either transverse to $Z$ or strictly nested in~$Z$, we have $d_Z(\pi_Z(X), \pi_Z(Y)) \le 2$.

\newcase

\textbf{5. Finite complexity.}
The length of a chain of nested subsurfaces in $\X$ is bounded above by $\xi(S)$.

\newcase

\textbf{6. Large links.}
Let $X \in \X$ and $a, b \in \ksep(S)$, with $R = d_X(a, b)+1$.
Assume for now that~$\xi(X) \ge 2$.
Let $\gamma_1, \gamma_2, \dots, \gamma_{R-1}, \gamma_R$ be a geodesic in~$\C(X)$, where $\gamma_1 \in \pi_X(a)$ and $\gamma_R \in \pi_X(b)$.
For each $1 \le i \le R$, let $Y_i$ be the component of $X \sminus \gamma_i$ containing the adjacent curves of the geodesic.
Note that $Y_i$ is not necessarily in~$\X$.

Suppose $Y \in \X$ satisfies $Y \nestneq X$ and $d_Y(a, b) > M$, where $M$ is the constant of \cite[Theorem~3.1]{mm2} (Bounded Geodesic Image; see also Axiom~7 below for more detail).
The Bounded Geodesic Image Theorem implies that, in this case, some $\gamma_i$ does not intersect~$Y$.
Hence $Y$ is contained in a single component of $S \sminus \gamma_i$.
Suppose that this component is not~$Y_i$.
Then the adjacent curves to $\gamma_i$ in the geodesic also do not cut~$Y$, by the definition of~$Y_i$.
Since $S \sminus Y_i$ is contained in $Y_{i-1}$ or $Y_{i+1}$, so too is~$Y$.
Hence, $Y$ is contained in some~$Y_i$.
We also need to check that this $Y_i$ is in~$\X$.
This follows from the fact that $Y$ is in~$\X$, and hence so is any subsurface containing~$Y$.

We include only those $Y_i$ which are in $\X$ in the list.
If there are no subsurfaces of $\X$ properly nested in~$X$, and, in particular, if~$\xi(X)=1$, then trivially $d_Y(a, b) \le M$ for every $Y \in \X$ with $Y \nestneq X$.
Finally, for each~$i$, we have $d_X(\pi_X(a), \pi_X(Y_i))=d_X(\pi_X(a), \pi_X(\gamma_i)) \le R$.

\newcase

\textbf{7. Bounded geodesic image.}
By \cite[Theorem~3.1]{mm2}, there exists $M$ so that for all $Y \nestneq X$, and any geodesic $g$ in $\C(X)$, either $\diam_{\C(Y)}(g) \le M$ or some vertex $\gamma$ of $g$ does not intersect $Y$.
If $\gamma$ is disjoint from $Y$, then it is adjacent in $\C(X)$ to $\pi_X(Y)=\partial_X Y$.
Hence, if $\diam_{\C(Y)}(g) > M$, then $g \cap N_{\C(X)}(\pi_X(Y), 1) \ne \varnothing$, and so the conditions of this axiom are satisfied for $E=M$.
For a proof that the constant $M$ does not depend on the surface $S$, see \cite{webb}.

\newcase

\textbf{8. Partial realisation.}
Let $\lbrace X_j \rbrace$ be a set of pairwise disjoint elements of $\X$, and let $\gamma_j$ be a curve in $X_j$ for each $j$.
We need to find a vertex $a$ of $\ksep_\G(S)$ with projections at bounded distance from $\gamma_j$ in each $\C(X_j)$, and at bounded distance from $\partial_S X_j$ for other subsurfaces in $\X$.
First define a multicurve $a' = \bigcup_j \gamma_j \cup \bigcup_j \partial_S X_j$.
Now add curves to complete $a'$ to a pants decomposition $a$.
As previously observed, this must be a vertex of $\ksep_\G(S)$ since every subsurface in $\X$ has positive complexity.
For each~$j$, the projection of $a$ to $X_j$ is a multicurve containing $X_j$, so $d_{X_j}(\pi_{X_j}(a), \gamma_j) \le 1$.
Furthermore, suppose that $X$ is a subsurface of $S$ containing $X_1$.
Since $a$ contains $\partial_X X_1$, $d_X(\pi_X(a), \pi_X(X_j)) = \diam_{\C(X)}(\pi_X(a)) \le 2$.
Similarly, if $Y \in\X$ is transverse to $X_j$, then $d_Y(\pi_Y(a), \pi_Y(X_j)) \le 2$.

\qedclaim

We remark that all of the above constants, apart from the complexity, may be taken to be independent of the surface $S$.
Our proof below that Axiom~9 holds gives constants which do depend on the surface $S$ and are probably far from optimal.
It would be interesting to consider how far they can be improved.
The quasi\hyp{}isometry constants in Section \ref{section: sep} also \emph{a priori} depend on the surface.

\subsection{Verification of Axiom 9} \label{subsection: axiom 9}

The most significant part of the proof of Theorem \ref{theorem: ksep hhs} is the verification of the final axiom.
For brevity of notation, we will now suppress the projection maps when considering distances and diameters for subsurface projections.

\begin{proposition} \label{proposition: ksep bound}
Let $S$ be a surface and $\G(S)$ a \mygraph{}.
For every $K$, there exists~$K'$, depending only on $K$ and the graph $\G(S)$, such that if $a$~and $b$ are two vertices of $\ksep_\G(S)$, and if $d_X(a, b) \le K$ for every subsurface $X$ in~$\X$, then $d_{\ksep_\G(S)}(a, b) \le K'$.
\end{proposition}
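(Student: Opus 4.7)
\medskip

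\noindent\textbf{Proof plan.} The plan is to proceed by induction on the complexity $\xi(S)$ of the surface. In the base case $\xi(S)=1$, the only possible witness of positive complexity is $S$ itself, so either $\X=\varnothing$ (in which case the statement is trivial once one checks the few vertices involved) or $\X=\lbrace S\rbrace$. In the latter case $\ksep_\G(S)$ is quasi\hyp{}isometric to $\C(S)$ via $\pi_S$, which is essentially the identity on curves, so the hypothesis $d_S(a,b)\le K$ immediately yields a bounded $\ksep_\G(S)$\hyp{}distance.

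\smallskip

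For the inductive step, assume the proposition holds on every proper essential subsurface of positive complexity, and suppose $d_X(a,b)\le K$ for every $X\in\X$. Pick a witness $X_0\in\X$ of maximal complexity; this $X_0$ certainly contains a curve of $a$ and a curve of $b$ (after replacing $a$ and $b$ by the results of a bounded\hyp{}length $\ksep$\hyp{}path that completes them to pants decompositions, using Claim~\ref{claim: ksep connected} and the $\nest$\hyp{}maximality of $S$). The first goal is to move $b$ by a bounded number of flip and add/remove moves in $\ksep_\G(S)$ to a vertex $b'$ which shares a curve $\alpha$ with $a$. To do this I would take a geodesic $\gamma_0,\dots,\gamma_n$ in $\C(X_0)$ with $\gamma_0\in\pi_{X_0}(a)$, $\gamma_n\in\pi_{X_0}(b)$ and $n\le K$, extend consecutive disjoint pairs $\lbrace\gamma_i,\gamma_{i+1}\rbrace$ to vertices of $\ksep_\G(S)$ by adding curves in the complement (chosen to avoid creating a witness component of $S\sminus\cdot$), and interpolate between these by adds, removes and flip moves. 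This produces a path in $\ksep_\G(S)$ of length bounded in terms of $K$ and $\xi(S)$ connecting $b$ to $b'$ with $\alpha\in a\cap b'$.

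\smallskip

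Now $a$ and $b'$ share the curve $\alpha$. Consider the components $S_1,\dots,S_k$ of $S\sminus\alpha$; each has $\xi(S_i)<\xi(S)$. For each $i$, the restriction of $\X$ to subsurfaces nested in $S_i$ yields the witness set of a \mygraph{} structure on $S_i$ (one checks the five conditions of Definition~\ref{definition: mygraph} directly — connectedness of the restricted $\ksep$\hyp{}graph follows from the proof of Claim~\ref{claim: ksep connected} applied component\hyp{}wise), and the multicurves $a\cap S_i$ and $b'\cap S_i$ are vertices of the associated $\ksep_{\G|_{S_i}}(S_i)$. Their subsurface projection distances are bounded by $K$ plus a controlled error, since for $X\in\X$ with $X\nest S_i$ the projection from $\ksep_\G(S)$ and the projection from $\ksep_{\G|_{S_i}}(S_i)$ agree up to uniform additive error. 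The inductive hypothesis then bounds $d_{\ksep_{\G|_{S_i}}(S_i)}(a\cap S_i,b'\cap S_i)$, and each edge there lifts to an edge of $\ksep_\G(S)$ by fixing $\alpha$ and the other curves of $a$ outside $S_i$. Concatenating over the $k\le\xi(S)$ components, then appending the path from $b'$ to $b$, yields a $\ksep_\G(S)$\hyp{}path from $a$ to $b$ of bounded length.

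\smallskip

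\noindent\textbf{Main obstacle.} The hard part is the reduction step constructing $b'$ from $b$: one needs to convert a short $\C(X_0)$\hyp{}geodesic into a bounded\hyp{}length path in $\ksep_\G(S)$, while ensuring at every intermediate step that no component of the complement of the current multicurve is a witness (so that it actually defines a vertex of $\ksep_\G(S)$). This is delicate because removing or flipping a curve may open up a new witness component, so one has to simultaneously maintain enough additional curves in the complement. A related subtlety is verifying that the restriction of $\G$ to each $S_i$ really gives a \mygraph{} in the sense of Definition~\ref{definition: mygraph}, and that the projections used on $S_i$ are coarsely compatible with the ambient projections from $\ksep_\G(S)$, so that the inductive hypothesis is applied with the same constant $K$ up to a controlled loss.
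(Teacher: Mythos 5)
Your overall architecture (reduce $a$ and $b$ to multicurves sharing a curve via a geodesic in the maximal witness, then recurse into complementary components) is a reasonable reorganisation of the hierarchy-style argument, but the step you yourself flag as the main obstacle is a genuine gap, not a technicality, and as written the induction does not close. The problem is the claim that interpolating between extensions of the consecutive pairs $\lbrace\gamma_i,\gamma_{i+1}\rbrace$ ``produces a path in $\ksep_\G(S)$ of length bounded in terms of $K$ and $\xi(S)$.'' Two vertices of $\ksep_\G(S)$ that both contain the curve $\gamma_{i+1}$ are \emph{not} at bounded distance in general: the distance between them is governed by their projections to witnesses nested in $S\sminus\gamma_{i+1}$, which depend on the (uncontrolled) choices of completing curves and can be arbitrarily large. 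Bounding the length of this interpolation is therefore exactly the same problem as the proposition itself, one level down, applied to the intermediate vertices rather than to $a$ and $b$ — and your component-wise induction is only ever applied to the single pair $(a,b')$, so it cannot absorb this. To control the projections of the intermediate multicurves to proper subsurfaces you need the Bounded Geodesic Image theorem \cite[Theorem~3.1]{mm2} together with tightness of the geodesic $\gamma_0,\dots,\gamma_n$ (so that the subset of $\gamma_i$ missing a given subsurface is an interval of consecutive indices); this is precisely the content of Claims~\ref{claim: bounded projection} and~\ref{claim: ladder increase} in the paper, and neither BGI nor tightness appears anywhere in your proposal. Without that input, the assertion that $d_X(a,b')\le K$ ``plus a controlled error'' for witnesses $X\nest S_i$ is unsupported, since it relies on the path from $b$ to $b'$ having bounded length in the first place.

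For comparison, the paper avoids the circularity by not resolving anything into a path until the very end: it builds an annulus system in $S\times I$ by repeatedly inserting tight ladders into ``bricks'' whose base surfaces are witnesses, in decreasing order of complexity, and uses BGI to show that the diameter of the projection of the accumulated curve system to any witness grows by at most $2M$ per insertion (Claim~\ref{claim: bounded projection}). This gives an a priori bound on the length of every geodesic inserted at every stage, and hence on the length of the final sequence of multicurves, which only then is read off as a path in $\ksep_\G(S)$. Your secondary step — that the witness set restricted to a component $S_i$ of $S\sminus\alpha$ is again realised by a \mygraph{} on $S_i$ with compatible projections — is plausible but also needs an actual construction (it is cleaner to define $\ksep$ directly from an upward-closed set of non-annular witnesses and verify connectedness and the remaining axioms for that set, rather than to restrict $\G$ itself). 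If you repair the reduction step by inserting tight geodesics and invoking BGI to control the intermediate projections, you will find you have essentially reconstructed the paper's argument.
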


In order to prove this, we make use of a combinatorial construction based on that described in \cite[Section~10]{bowelt}.
This will give us a way of representing a sequence of multicurves in $S$.
We shall construct this sequence inductively so that eventually it will be a path in $\ksep_\G(S)$.
We remark that this method is also related to the hierarchy machinery of \cite{mm2}.

We shall consider the product $S \times I$, for a non\hyp{}trivial closed interval $I$.
We consider $S$ to be the \emph{horizontal} direction and $I$ to be the \emph{vertical} direction.
We have a vertical projection $S \times I \rightarrow S$ and a horizontal projection $S \times I \rightarrow I$.
When we denote a subset of $S \times I$ by $A_1 \times A_2$, $A_1$ will be a subset of the horizontal factor, $S$, and $A_2$ of the vertical factor, $I$.
To ensure that curves in $S$ are pairwise in minimal position, we will fix a hyperbolic structure on $S$ with totally geodesic boundary and take the geodesic representative of each isotopy class of curves.

\begin{definition}
A \emph{vertical annulus} in $S \times I$ is a product $\gamma \times I_\gamma$, where $\gamma$ is a curve in $S$ and $I_\gamma$ is a non\hyp{}trivial closed subinterval of $I$.
The curve $\gamma$ is the \emph{base curve} of the annulus.
\end{definition}

\begin{definition}
An \emph{annulus system} $W$ in $S \times I$ is a finite collection of disjoint vertical annuli.
An annulus system $W$ is \emph{generic} if whenever $\gamma_1 \times I_1$ and $\gamma_2 \times I_2$ are two distinct annuli in $W$, we have $\partial I_1 \cap \partial I_2 \subseteq \partial I$.
\end{definition}

We denote $S \times \lbrace t \rbrace$ by $S_t$ and $W \cap S_t$ by $W_t$.
Each $W_t$ is a (possibly empty) multicurve, and there is a discrete set of points in $I$ where the multicurve $W_t$ changes.
Hence the annulus system is a way of recording a sequence of multicurves in $S$.

\begin{definition}
Let $\xi(S) \ge 2$.
A \emph{tight geodesic} in $\C(S)$ between curves $\gamma$ and $\gamma'$ is a sequence $\gamma=v_0, v_1, \dots, v_{n-1}, v_n=\gamma'$, where:
\begin{itemize}
\item each $v_i$ is a multicurve in $S$,
\item for any $i \ne j$ and any curves $\gamma_i \in v_i$, $\gamma_j \in v_j$,  $d_S(\gamma_i, \gamma_j)= \lvert i-j \rvert$,
\item for each $1 \le i \le n-1$, $v_i$ is the boundary multicurve of the subsurface spanned by $v_{i-1}$ and $v_{i+1}$ (excluding any components of $\partial S$).
\end{itemize}
If $\xi(S)=1$, then a tight geodesic is an ordinary geodesic in $\C(S)$.
\end{definition}

Note that this is called a \emph{tight sequence} in \cite[Definition~4.1]{mm2}.
The tight geodesics of~\cite{mm2} are also equipped with initial and terminal markings.
A tight geodesic can be realised as an annulus system as follows.

\begin{definition}
A \emph{tight ladder} in $S \times I$ is a generic annulus system $W$ so that:
\begin{itemize}
\item there exists a tight geodesic $v_0, v_1, \dots, v_{n-1}, v_n$ in $\C(S)$ so that the curves appearing in the tight geodesic correspond exactly to the base curves of the annuli in $W$,
\item for two annuli $\gamma \times I_\gamma$ and $\delta \times I_\delta$ in $W$, the intervals $I_\gamma$ and $I_{\delta}$ overlap if and only if $\gamma$ and $\delta$ are disjoint,
\item there exist $t_0 < t_1 < \dots < t_{n-1} < t_n$ in $I$ such that for each $i$ the multicurve $W_{t_i}=v_i$.
\end{itemize}
\end{definition}

In the case where $\xi(S) \ge 2$, this corresponds to moving from $v_i$ to $v_{i+1}$ by adding in the curves of $v_{i+1}$ one at a time then removing the curves of $v_i$ one at a time (Figure \ref{figure: tightladder1}).
In the case where $\xi(S) = 1$, this corresponds to moving from $v_i$ to $v_{i+1}$ by removing the curve $v_i$ then adding in the curve $v_{i+1}$ after a vertical interval with no annuli (Figure \ref{figure: tightladder2}).

\begin{figure}[h!]
\begin{subfigure}[b]{0.48\textwidth}
\centering
\includegraphics[width=.6\textwidth]{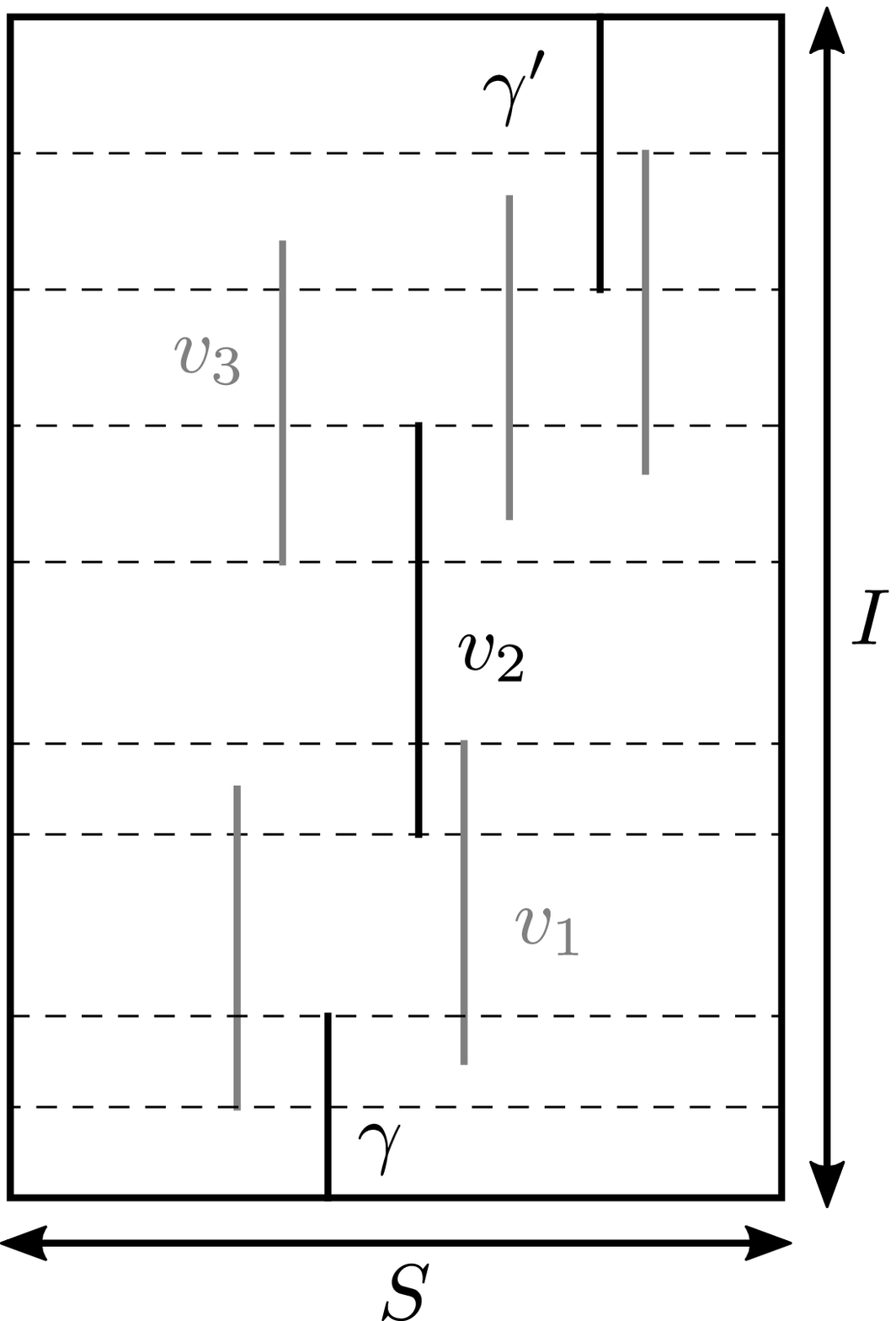}
\subcaption{Complexity $\xi(S) \ge 2$.}
\label{figure: tightladder1}
\end{subfigure}
\begin{subfigure}[b]{0.48\textwidth}
\centering
\includegraphics[width=.6\textwidth]{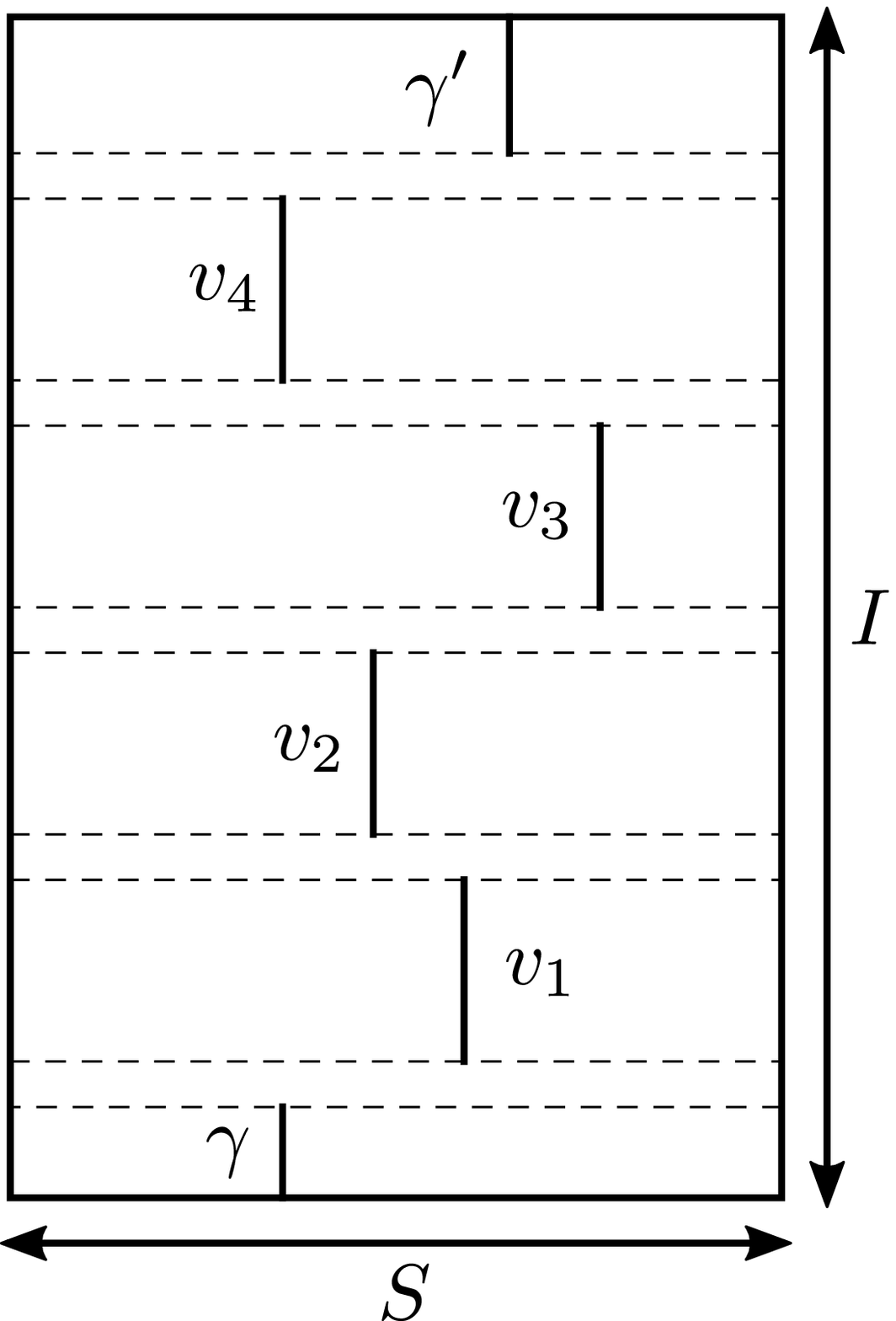}
\subcaption{Complexity $\xi(S)=1$.}
\label{figure: tightladder2}
\end{subfigure}
\caption{Illustrations of tight ladders in $S \times I$.}
\end{figure}

\begin{definition}
Let $t \in I$, and let $X$ be a component of $S_t \sminus W_t$.
Let $J \subseteq I$ be the maximal interval containing $t$ such that $X$ is a component of $S_s \sminus W_s$ for every $s \in J$.
The product $X \times \bar{J}$ is a \emph{brick} of $W$.
The surface $X$ is the \emph{base surface} of the brick.
\end{definition}

We remark that this differs slightly from the definition of ``brick'' in~\cite{bowelt}.
Note that the interiors of any two distinct bricks are disjoint, and that we may decompose $S \times I$ as a union of regular neighbourhoods of all bricks of~$W$ (recall that when we remove a multicurve $a$ from $S$, we also remove a regular open neighbourhood of $a$).
In order to obtain a path in $\ksep_\G(S)$, we want to decompose $S \times I$ into bricks whose base surfaces are not in~$\X$.

\begin{definition}
A brick $X \times [s, t]$ is \emph{small} if one of the following holds.
\begin{enumerate}
\item[(Type 1)] The base surface $X$ is not in~$\X$.
\item[(Type 2)] The base surface $X$ has complexity 1 and is in~$\X$.
Moreover, $W_s$ and $W_t$ each intersect $X$ in an essential non\hyp{}peripheral curve, and the two curves are adjacent in~$\C(X)$.
\end{enumerate}
\end{definition}

Notice that a generic annulus system~$W$ where every brick is small realises a path in~$\ksep(S)$, as follows.
If a cross\hyp{}section $S_t$ of $S \times I$ intersects only Type~1 small bricks, then the multicurve $W_t$ is a vertex of $\ksep_\G(S)$.
First, for simplicity, let us assume that all of the bricks in~$W$ are Type~1 small bricks.
The multicurves $W_t$ for $t \in I$ change precisely at the points in the interior of~$I$ which are the endpoints of horizontal projections of annuli in~$W$.
Let $P$ denote this set of points.
Let $I_0, \dots, I_n$ be the components of~$I \sminus P$ in the order in which they appear in~$I$, and for each $0 \le j \le n$ pick any $t_j$ from~$I_j$.
Let $a_j$ be the multicurve~$W_{t_j}$.
The sequence $a_0, \dots, a_n$ is a path in~$\ksep_\G(S)$ given by successive moves of adding and removing curves.

In the case where $\X$ contains complexity 1 subsurfaces, we place an additional restriction on a generic annulus system, requiring that whenever we have a Type~2 small brick, the endpoints of its horizontal projection to~$I$ are consecutive points of~$P$.
This can be achieved by appropriate isotopies.
Again, let $W$ be a generic annulus system where every brick is small.
Construct the sequence of curves~$a_j$ as above and suppose that, for some~$j$, $S \sminus a_j$ has a component~$X$ which is a complexity~1 subsurface in~$\X$ (and hence $a_j$ is not a vertex of~$\ksep_\G(S)$).
Then by the restriction on the endpoints of the horizontal projection of a Type~2 small brick, $X$ is not a component of $S \sminus a_{j-1}$ or~$S \sminus a_{j+1}$, and neither is any other complexity~1 subsurface which is in~$\X$.
Then $a_{j-1}$ and~$a_{j+1}$ are vertices of~$\ksep_\G(S)$, and moreover, by the definition of a Type~2 small brick, they are adjacent in this graph.
Hence we obtain a path in~$\ksep_\G(S)$ as for the previous case, except that we must remove any multicurves in the sequence $a_0, \dots, a_n$ which are not vertices of~$\ksep_\G(S)$.

\begin{sloppypar}
\begin{definition}
The $\ksep$\hyp\emph{complexity} of an annulus system $W$ is $(N_{\xi(S)}, N_{\xi(S)-1}, \dots, N_1)$, where, for each $i$, $N_i$ is the total number of non\hyp{}small bricks of $W$ whose base surface is a subsurface in $\X$ of complexity $i$.
We give this the lexicographical ordering.
\end{definition}
\end{sloppypar}

Since there are no subsurfaces in $\X$ of complexity less than 1, the $\ksep$\hyp{}complexity is $(0, 0, \dots, 0)$ precisely when every brick is small.

\qedclaim

We now begin the proof of Proposition \ref{proposition: ksep bound}.
Let $I=[0,1]$.
We shall construct a generic annulus system in $S \times I$, with $\ksep$\hyp{}complexity $(0, 0, \dots, 0)$, which realises a path in $\ksep_\G(S)$ from $a$ to $b$, and show that the length of this path is bounded in terms of $K$.

We construct the annulus system inductively.
We start by choosing distinct points $t_{\alpha} \in (0, \frac{1}{2})$ for each curve $\alpha$ of $a$ and $t_{\beta} \in (\frac{1}{2}, 1)$ for each curve $\beta$ of $b$ and defining an annulus system $\W0=\bigcup_{\alpha}(\alpha \times [0, t_\alpha]) \cup \bigcup_{\beta}(\beta \times [t_\beta, 1])$.

We will describe below the procedure for constructing a new annulus system $\W{k+1}$ from $\W{k}$, where the first annulus system $\W0$ is as defined above.
We shall do this in such a way that each annulus system interpolates between $a$ and~$b$ (in fact, $\W{k+1}$ contains $\W{k}$), and such that the $\ksep$-complexity of $\W{k+1}$ is strictly less than that of $\W{k}$.
This process will eventually terminate with an annulus system with $\ksep$\hyp{}complexity $(0, 0, \dots, 0)$.

Suppose we have constructed a generic annulus system~$\W{k}$.
We will describe how to construct the next stage~$\W{k+1}$; see Figure~\ref{figure: induction} for an illustration.
Consider the bricks of~$\W{k}$.
If every brick is small, then the $\ksep$\hyp{}complexity of~$\W{k}$ is $(0, \dots, 0)$ and we are done.
Suppose this is not the case, and choose a brick $Y \times [t_-,t_+]$, where $Y$ is in~$\X$ and has maximal complexity among such bricks.
(Note that \emph{a priori} the same subsurface $Y$ might appear as the base surface of more than one brick.)
Decreasing past~$t_-$ and increasing past~$t_+$, the components of $S_t \sminus \W{k}_t$ change to not include~$Y$.
Since $Y$ has maximal complexity among base surfaces of~$\W{k}$ in~$\X$, it is not a proper subsurface of any component of $S_t \sminus \W{k}_t$ for any~$t \in I$.
Hence, the intersection of $\W{k}_{t_-}$ and of~$\W{k}_{t_+}$ with~$Y$ must be non\hyp{}empty, and, since $\W{k}$ is generic, it is in each case a single curve, which we call $\gamma_-$ and~$\gamma_+$ respectively.
Slightly extend $[t_-,t_+]$ on each side to $J=[t_--\epsilon,t_++\epsilon]$ so that the subset $Y \times J$ now contains vertical annuli corresponding to each of these curves but still intersects no other annuli.
We may consider annulus systems in $Y \times J$ as for $S \times I$.
Add a tight ladder in $Y \times J$, corresponding to a tight geodesic in~$\C(Y)$ from $\gamma_-$ to~$\gamma_+$, arranging that the resulting annulus system in $S \times I$ is generic by slightly moving the endpoints of intervals if necessary.
The annulus system $\W{k+1}$ is the union of~$\W{k}$ and the tight ladder in~$Y \times J$.
Notice that the $\ksep$\hyp{}complexity of $\W{k+1}$ is strictly less than that of~$\W{k}$.

\begin{figure}[h!]
\centering
\includegraphics[width=0.3\textwidth]{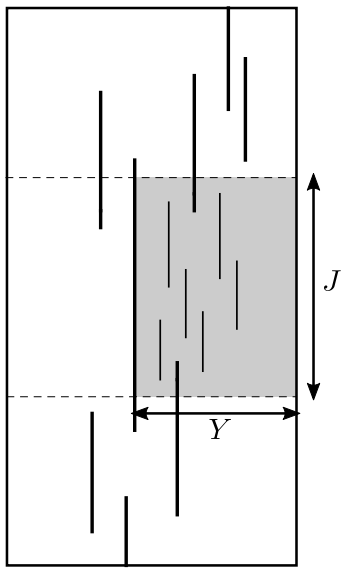}
\caption{Constructing $\W{k+1}$ from $\W{k}$ by adding a tight ladder in a brick $Y \times J$.}
\label{figure: induction}
\end{figure}

At each stage, we add a tight ladder in some brick, $Y \times J$, increasing the length of the sequence of multicurves determined by the annulus system, where these multicurves are not yet necessarily vertices of $\ksep(S)$.
Let us consider the maximal increase in the length of this sequence, in terms of the length of the ladder added.

\begin{claim} \label{claim: ladder increase}
Let $Y \times J$ be a brick in the annulus system~$\W{k}$.
Suppose that we add a tight ladder $v_0, v_1, \dots, v_{n-1}, v_n$ in $Y \times J$ to obtain $\W{k+1}$.
Then the difference in the lengths of the sequences of multicurves determined by $\W{k}$ and $\W{k+1}$ is at most $2n\xi(Y) - 2$.
\end{claim}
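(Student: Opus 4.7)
The plan is to count ``events,'' by which I mean endpoints of vertical annulus intervals lying in the interior of~$I$; each event corresponds to a single curve being added to or removed from~$W_t$, so the length of the sequence of multicurves determined by an annulus system is exactly one more than the total number of events. Since~$\W{k+1}$ and~$\W{k}$ agree outside~$Y \times J$, the difference in sequence lengths equals the net change in the number of events occurring inside~$J$.

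To count events of~$\W{k}$ in the interior of~$J$: since $Y \times [t_-,t_+]$ is a brick and~$\W{k}$ is generic, the only annuli of~$\W{k}$ meeting $Y \times J$ are the tail of $\gamma_- \times I_{\gamma_-}$ (whose upper endpoint is at~$t_-$) and the head of $\gamma_+ \times I_{\gamma_+}$ (whose lower endpoint is at~$t_+$), contributing exactly two events. To count events of~$\W{k+1}$ in the interior of~$J$, I would first use the distance condition in the definition of a tight geodesic to observe that if a curve~$\gamma$ lay in both~$v_i$ and~$v_j$ then $|i-j|=d_Y(\gamma,\gamma)=0$, so every curve appearing in the geodesic belongs to a unique~$v_i$; hence there are exactly $\sum_{i=0}^{n}|v_i|$ distinct annuli in the ladder. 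In the union~$\W{k+1}$ the ladder annulus with base curve $v_0=\gamma_-$ merges with the existing $\gamma_-$\hyp{}annulus of~$\W{k}$, so only its new upper endpoint (inside the ladder, where $\gamma_-$ is first removed) lies in the interior of~$J$; likewise for~$v_n=\gamma_+$. Every other annulus---whose base curve belongs to some~$v_i$ with $1 \le i \le n-1$---has both of its endpoints in the interior of~$J$, contributing two events. Summing gives $2+2\sum_{i=1}^{n-1}|v_i|$ events of~$\W{k+1}$ in~$J$.

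Taking the difference and using $|v_i|\le\xi(Y)$,
\[ \Big(2+2\sum_{i=1}^{n-1}|v_i|\Big) - 2 \;=\; 2\sum_{i=1}^{n-1}|v_i| \;\le\; 2(n-1)\xi(Y) \;\le\; 2n\xi(Y)-2, \]
which is the required bound. The complexity\hyp{}one case is handled by the same count: each~$v_i$ is a single curve, so the endpoints of the ladder's annuli in the interior of~$J$ number exactly~$2n$ (the separating ``empty'' interval between $v_i$ and~$v_{i+1}$ does not change the endpoint count), and subtracting the two old events gives $2n-2=2n\xi(Y)-2$.

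The one delicate point is the merging at the boundary of~$J$: one must check that, after the genericity perturbation of the ladder's interval endpoints, the lower endpoint of the new $\gamma_-$\hyp{}annulus and the upper endpoint of the pre-existing $\gamma_-$\hyp{}annulus still overlap in~$\W{k+1}$ (and similarly at~$t_+$), so that the events at~$t_\pm$ are genuinely absorbed into the new events inside the ladder rather than being double\hyp{}counted. Once this bookkeeping is in hand, the argument is a direct application of the tightness axiom.
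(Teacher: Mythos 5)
Your proof is correct and follows essentially the same route as the paper's: both count the endpoints of the annuli's vertical intervals (the points where the cross-sectional multicurve changes), observe that the old endpoints at $t_\pm$ are absorbed when the $\gamma_\pm$ annuli merge with the ladder, and bound the new endpoints by $\sum_i |v_i| \le \xi(Y)$ per level. Your per-annulus bookkeeping (using tightness to see each curve occurs in a unique $v_i$) is just a reorganization of the paper's per-transition count $\sum_{i}(|v_i|+|v_{i+1}|)$, and your handling of the $\xi(Y)=1$ case matches the paper's.
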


Let us consider the set $P_k$ of points in the interior of $I$ which are endpoints of horizontal projections of bricks in $\W{k}$ to~$I$.
Since these precisely correspond to places where the multicurve $\W{k}_t$ changes, the cardinality of $P_k$ is one less than the length of the sequence of multicurves corresponding to~$\W{k}$.
Now, let $Q$ be the set of points in $J$ corresponding to the endpoints of the horizontal projections of bricks to~$J$.
Two of these points, considered as points of~$I$, are already in~$P_k$, and the rest are not.
The set $P_{k+1}$ of endpoints of horizontal projections of bricks in $\W{k+1}$ is exactly $P_k \cup Q$, so the increase in length is $\vert Q \vert -2$.

We now bound the cardinality of~$Q$.
First suppose $\xi(Y) \ge 2$.
The transition from $v_i$ to $v_{i+1}$ gives a point of $Q$ for every curve in $v_i$ and every curve in $v_{i+1}$, so $\vert Q \vert = (\vert v_0 \vert + \vert v_1 \vert) + (\vert v_1 \vert + \vert v_2 \vert) + \dots + (\vert v_{n-1} \vert + \vert v_n \vert) \le n\xi(Y)$.
Now suppose $\xi(Y) =1$.
Then the number of points of $Q$ is $2n=2n\xi(Y)$.
Hence between $\W{k}$ and $\W{k+1}$, when we add a tight ladder of length $n$ in a brick $Y \times J$, we add at most $2n\xi(Y)-2$ to the length of the corresponding sequences of curves.

\qedclaim

The length of the tight ladder we add between $\W{k}$ and $\W{k+1}$ is equal to $d_Y(\gamma_-, \gamma_+)$.
We now show that this quantity is bounded above in terms of $k$ and~$K$.

\begin{claim} \label{claim: bounded projection}
Let $\Gam{k}$ be the set of the base curves of all annuli in $\W{k}$ and $K$ as in the statement of Proposition~\ref{proposition: ksep bound}.
Let $M$ be the constant of the Bounded Geodesic Image Theorem \cite[Theorem~3.1]{mm2}.
Then $\diam_{\C(X)}(\pi_X(\Gam{k})) \le K + 2Mk$ for each~$X \in \X$.
\end{claim}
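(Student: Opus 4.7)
\smallskip

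\noindent\textbf{Proof proposal.}
I would prove the claim by induction on $k$. For the base case $k=0$, the annulus system $\W0$ has $\Gam{0}$ equal to the curves of $a$ together with those of $b$. Since $d_X(a,b) = \diam_{\C(X)}(\pi_X(a)\cup\pi_X(b))$ by definition, the hypothesis $d_X(a,b)\le K$ of Proposition~\ref{proposition: ksep bound} gives $\diam_{\C(X)}(\pi_X(\Gam{0}))\le K$ immediately.

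For the inductive step, assume the bound for $\W{k}$. The transition $\W{k}\to\W{k+1}$ inserts inside a single brick $Y\times J$ a tight ladder realising a tight geodesic $v_0,v_1,\dots,v_n$ in $\C(Y)$, where $v_0=\gamma_-$ and $v_n=\gamma_+$ are already base curves of $\W{k}$. Thus $\Gam{k+1}=\Gam{k}\cup(v_1\cup\cdots\cup v_{n-1})$, and I need to show that for each $X\in\X$ the diameter of $\pi_X(\Gam{k+1})$ exceeds that of $\pi_X(\Gam{k})$ by at most $2M$. I would split the analysis into cases by comparing $X$ with $Y$ in the nesting/orthogonality/transversality lattice. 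The straightforward cases are handled quickly: if $X\perp Y$, no new curve has non-trivial projection to $\C(X)$; if $Y\nestneq X$, each $v_i\subset Y$ is disjoint from $\partial_X Y=\pi_X(Y)$, so all new projections lie within $\C(X)$-distance $1$ of a fixed set of diameter $\le 2$; and if $X=Y$, the $\pi_Y(v_i)$ lie on a $\C(Y)$-geodesic between the two old points $\pi_Y(\gamma_-)$ and $\pi_Y(\gamma_+)$, so the Gromov four-point condition in the $\delta$-hyperbolic space $\C(Y)$ (arranging $M\ge\delta$, which we may do by enlarging $M$) keeps the diameter from growing by more than $2\delta\le 2M$.

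The serious case is $X\nestneq Y$, where I would apply the Bounded Geodesic Image Theorem to the tight geodesic $v_0,\dots,v_n$. If every $v_i$ intersects $X$, then $\diam_{\C(X)}(\pi_X(\{v_i\}))\le M$ directly, and since $\pi_X(\gamma_-)$ lies in both $\pi_X(\Gam{k})$ and $\pi_X(\{v_i\})$, the overall diameter grows by at most $M$. Otherwise some $v_j$ is disjoint from $X$; I would locate the least and greatest such indices and apply BGI to the two flanking sub-geodesics, on each of which every vertex meets $X$, contributing at most $M$ to the diameter on each side. A vertex $v_j$ disjoint from $X$ satisfies $d_Y(v_j,\pi_Y(X))\le 1$, so the consistency inequality for $X\nestneq Y$ (together with the bound on $\pi_Y^X$) controls any intermediate vertices that do project to $X$. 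The $X\pitch Y$ case is handled analogously: Behrstock's inequality produces the dichotomy that either $\pi_X(v_i)$ is within $\kappa$ of the bounded set $\pi_X(Y)$, or $v_i$ is close to $\pi_Y(X)$ in $\C(Y)$, and both alternatives cap the diameter increment by a uniform constant.

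The main obstacle is precisely the case $X\nestneq Y$ with some vertices of the tight geodesic disjoint from $X$: BGI does not apply to the full geodesic, and one has to glue sub-geodesic estimates using the consistency inequalities and the fact that such $v_j$ are adjacent to $\pi_Y(X)$ in $\C(Y)$. This is where the factor of $2$ in the increment $2M$ enters, accounting for the two halves of the tight geodesic on either side of the region where the projection becomes trivial. Once this case analysis is in place, the induction closes with the stated bound $K+2Mk$, and in particular the constants depend only on $K$, the hyperbolicity constant of curve graphs, and the BGI constant $M$ (which are all uniform, possibly up to dependence on $S$ through the complexity bound that terminates the process).
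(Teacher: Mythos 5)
Your overall architecture---induction on $k$, a case analysis on how the projection surface $X$ meets the surface $Y$ in which the new tight ladder is inserted, and the Bounded Geodesic Image Theorem applied to sub\hyp{}geodesics all of whose vertices cut $X$---is the same as the paper's, and your treatment of the case $X=Y$ via hyperbolicity of $\C(Y)$ is in fact more careful than the paper's (which folds $X=Y$ into the nested case where BGI does not literally apply). However, there is a genuine gap at exactly the point you identify as the main obstacle: the case $X\nestneq Y$ when some vertices of the geodesic are disjoint from $X$. Your proposed control of ``intermediate vertices that do project to $X$'' via the consistency inequality does not work. The consistency inequalities are statements about points of the hierarchically hyperbolic space (multicurves cutting every witness), not about a single curve $v_j$ contained in $Y$; and there is no a priori constraint on where a curve of $Y$ that happens to cut $X$ projects in $\C(X)$. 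Knowing that its neighbours $v_{j-1},v_{j+1}$ on the geodesic miss $X$ (hence lie within distance $1$ of $\partial_Y X$ in $\C(Y)$) tells you nothing about $\pi_X(v_j)$ itself. The missing ingredient is tightness: by \cite[Lemma~4.10]{mm2}, on a \emph{tight} geodesic the vertices that fail to cut $X$ occur as a block of consecutive indices, so there are no intermediate vertices at all, and BGI applied to the (at most two) flanking sub\hyp{}geodesics, each of which contains one of $\gamma_\pm\in\Gam{k-1}$, yields the increment $2M$. This is precisely why the construction inserts tight ladders rather than arbitrary geodesics; for a non\hyp{}tight geodesic your argument would not close, and the tool you substitute does not repair it.

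A secondary problem is the transverse case. In your Behrstock dichotomy, the branch in which $v_i$ is close to $\pi_Y(X)$ in $\C(Y)$ places no bound whatsoever on $\pi_X(v_i)$, so the dichotomy does not ``cap the diameter increment'' as claimed. The correct argument here is simpler: since $X$ meets $Y$ but is not nested in it, some curve $\delta$ of $\partial_S Y$ cuts $X$; this $\delta$ is already a base curve of $\W{k-1}$, and every newly added curve lies in $Y$ and is therefore disjoint from $\delta$, so it either misses $X$ or projects within distance $2$ of $\pi_X(\delta)$. This bounds the increment by $4\le 2M$ and, as you note for $Y\nestneq X$, the same disjointness\hyp{}from\hyp{}$\partial_S Y$ reasoning handles that case as well.
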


We prove this by an induction on $k$.
The base case is when $k=0$ and holds since, by hypothesis, $\diam_{\C(X)}(a \cup b) \le K$ for every $X \in \X$.

Now, suppose at stage $k-1$ the projection has diameter at most $K + 2M(k-1)$.
At stage $k$, we add a tight geodesic $g$ in $\C(Y)$ for some $Y \in \X$, where the first and last terms in $g$ are curves which already appear as base curves in $\W{k-1}$.
There are several cases depending on how the subsurface $X$ to which we are projecting intersects $Y$.

\emph{Case 1:} $X$ is disjoint from $Y$.
Then none of the curves added in $Y$ contributes to the projection to $X$ so the diameter is unchanged.

\emph{Case 2:} $X$ intersects $Y$ and is not nested in $Y$.
Then there is a curve $\delta$ in $\partial_S Y$ which intersects $X$ non\hyp{}trivially.
Such a curve is also a base curve in $\Gam{k-1}$.
Every curve added in $Y$ is disjoint from $\delta$.
Hence every curve added either does not intersect $X$ so does not change the projection to $\C(X)$, or projects to a curve at distance at most 2 from $\pi_X(\delta)$.
Hence, the diameter of the projection increases by at most 4.

\emph{Case 3:} $X$ is nested in $Y$.
First suppose that every vertex of the tight geodesic, $g$, we add in $Y$ intersects~$X$.
Then by the Bounded Geodesic Image Theorem, the diameter of $\pi_X(g)$ is at most~$M$.
Moreover, some of the vertices in $\pi_X(g)$ were also contained in $\pi_X(\Gam{k-1})$, so the diameter increases by at most $M$.

Now suppose that not every vertex of $g$ intersects $X$.
Since $g$ is a tight geodesic, the vertices which do not cut $X$ are a sequence of consecutive terms \cite[Lemma~4.10]{mm2}.
Therefore, there are subpaths of the geodesic $g$ where all vertices intersect $X$ - two such if both $v_0$ and $v_n$ intersect $X$, and one if only one of the end vertices intersects~$X$.
If neither of $v_0$ and $v_n$ intersects $X$ then the entire geodesic is disjoint from $X$ and hence does not change the projection to~$X$.
The subpaths of $g$ where every vertex intersects $X$ are necessarily geodesic in $\C(Y)$, and moreover, a vertex from each was already contained in~$\Gam{k-1}$.
Hence, again by the Bounded Geodesic Image Theorem, we have $\diam_{\C(X)}(\Gam{k}) \le \diam_{\C(X)}(\Gam{k-1})+2M \le K+2Mk$.

\qedclaim

In order to find an upper bound on the length of the final path in $\ksep(S)$, we will find upper bounds on the length of the sequence of multicurves at certain stages of the induction.
It will also be useful to bound the $\ksep$\hyp{}complexity in terms of the length of the sequence, and we will obtain such an upper bound from the following claim.

\begin{claim} \label{claim: length to bricks}
Let $\mathcal{W}$ be an annulus system and $n$ the length of the corresponding sequence of multicurves.
Then the number $B$ of bricks in $\mathcal{W}$ is at most $\frac{3}{2}n+\xi(S)$. 
\end{claim}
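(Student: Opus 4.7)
The plan is to bound $B$ by a double-counting argument on the ``vertical'' endpoints of the bricks of $\mathcal{W}$. Each brick $X \times [s,t]$ contributes one endpoint at height $s$ and one at height $t$, giving $2B$ endpoints in total. Every such endpoint lies either at $t=0$, at $t=1$, or at one of the $n-1$ transition points of $P$ in the interior of $I$ (the points where some annulus of $\mathcal{W}$ starts or ends and hence where the multicurve $W_t$ changes).

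At $t=0$ the bricks beginning there are in bijection with the components of $S \sminus W_0$, and similarly at $t=1$ the bricks ending there are in bijection with the components of $S \sminus W_1$. For any multicurve $a$ in the connected surface $S$ one has $|a| \le \xi(S)$, and the number of components of $S \sminus a$ is at most $|a|+1$ (a quick induction: adding a single curve to a multicurve increases the number of components of its complement by at most one). So at most $\xi(S)+1$ brick endpoints occur at each of the two horizontal boundaries.

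At each $p \in P$, genericity of $\mathcal{W}$ ensures that exactly one annulus starts or ends at $p$, so $W_{p^+}$ differs from $W_{p^-}$ by the addition or removal of a single curve $\gamma$. The bricks unaffected by this transition continue through $p$ without contributing an endpoint there; the only brick endpoints at $p$ come from the component $Y$ of the complement containing $\gamma$ in its interior (or the one or two components bounded by $\gamma$, in the removal case), together with the components of $Y \sminus \gamma$ (or the merged component, in the removal case). Since cutting a connected surface along a simple closed curve yields either $1$ or $2$ components, the total number of brick endpoints occurring at $p$ is either $2$ or $3$.

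Combining these counts gives $2B \le 2(\xi(S)+1) + 3(n-1) = 3n + 2\xi(S) - 1$, hence $B \le \frac{3n}{2} + \xi(S) - \frac{1}{2} \le \frac{3n}{2} + \xi(S)$. The only step that takes any real thought is the case analysis at a transition point, which is a short topological argument; everything else is bookkeeping that follows directly from the definitions of a brick and of a generic annulus system.
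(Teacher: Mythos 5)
Your proof is correct and is essentially the paper's argument: a double count of brick endpoints against the $n-1$ interior transition points (at most $3$ endpoints each, by the one-or-two-components observation when cutting along the single curve that changes at a generic transition) plus the bricks meeting $S \times \{0,1\}$. Your bookkeeping of the end bricks (at most $\xi(S)+1$ at each of $t=0$ and $t=1$) is in fact slightly more careful than the paper's, and yields the same bound.
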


Consider the set of points $P$ in the interior of $I$ corresponding to the endpoints of horizontal projections of annuli in~$W$.
The cardinality of $P$ is $n-1$.
Each brick which does not meet $S \times \lbrace 0, 1 \rbrace$ projects to an interval in $I$ whose endpoints are exactly two points of $P$.
If a brick does meet $S \times \lbrace 0, 1 \rbrace$ then it corresponds to just one point of~$P$.
There are at most $2\xi(S)$ such end bricks.
On the other hand, each point of $P$ corresponds to a curve in the base surface of a brick.
This curve can be cut along to give either one or two new bricks, and hence meets up to three bricks in total.
Hence we have
\begin{align*}
2B - 2\xi(S) &\le 3(n-1) \\
B &\le \frac{3}{2}n +\xi(S).
\end{align*}

\qedclaim

Now, for each $1 \le i \le \xi(S)$, let $k_i$ be minimal such that $\N{k_i}{j}=0$ for all $i \le j \le \xi(S)$.
That is, the $k_i$-th stage is the stage where the last non\hyp{}small brick of complexity $i$ has been filled in, and we will start to fill in bricks of complexity $i-1$, if witnesses of this complexity exist.
In particular $k_{\xi(S)} \le k_{\xi(S)-1} \le \dots \le k_1$, and the $\ksep$\hyp{}complexity of $\W{k_1}$ is $(0, 0, \dots, 0)$.

\begin{claim} \label{claim: bounded length}
Let $K$ be as in Proposition \ref{proposition: ksep bound} and $M$ the constant of the Bounded Geodesic Image Theorem.
Define $(T_i, L_i)$ inductively by $T_{\xi(S)}=(2K+2)\xi(S)$, $L_{\xi(S)}=1$, $T_{i}=T_{i+1}+4T_{i+1}(K+2ML_{i+1})\xi(S)+8MT_{i+1}^2\xi(S)$ for $1 \le i \le \xi(S)-1$ and $L_i=L_{i+1}+2T_{i+1}$ for $1 \le i \le \xi(S)-1$.
Then for each $1 \le i \le \xi(S)$, $k_i \le L_i$ and the length of the sequence of multicurves determined by $\W{k_i}$ is at most $T_i$.
\end{claim}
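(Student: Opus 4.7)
The plan is a downward induction on $i$, from $i=\xi(S)$ down to $i=1$, using Claims~\ref{claim: ladder increase}, \ref{claim: bounded projection} and~\ref{claim: length to bricks} as the main tools to control simultaneously how many stages of the construction can occur and how much the sequence of multicurves can grow at each stage.

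For the base case $i=\xi(S)$, the only possible element of $\X$ of complexity $\xi(S)$ is $S$ itself, so at most one stage is required to handle complexity~$\xi(S)$ and $k_{\xi(S)} \le 1 = L_{\xi(S)}$. The initial annulus system $\W{0}$ determines a sequence of multicurves of length $|a|+|b|+1 \le 2\xi(S)+1$. If $S\in\X$, then the single stage $\W{0}\to\W{1}$ fills in the brick of base surface $S$ with a tight ladder of length $n=d_S(\gamma_-,\gamma_+) \le K$ by Claim~\ref{claim: bounded projection} applied at $k=0$; by Claim~\ref{claim: ladder increase} this adds at most $2K\xi(S)-2$ to the length, giving a total sequence length at most $(2K+2)\xi(S)-1 \le T_{\xi(S)}$.

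For the inductive step, assume the statement holds at index $i+1$. Every stage between $k_{i+1}$ and $k_i$ fills in a non-small brick whose base surface lies in~$\X$ and has complexity exactly~$i$: all larger complexities are exhausted by stage~$k_{i+1}$, and filling in a complexity-$i$ brick only creates bricks of complexity $<i$ in its interior while leaving the rest of the annulus system unchanged. Therefore each such stage strictly decreases the count of complexity-$i$ non-small bricks, and this count at stage $k_{i+1}$ is at most the total number of bricks there, which by Claim~\ref{claim: length to bricks} and the inductive length bound is at most $\tfrac{3}{2}T_{i+1}+\xi(S) \le 2T_{i+1}$ (noting $T_{i+1} \ge T_{\xi(S)} \ge 2\xi(S)$). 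Hence $k_i-k_{i+1} \le 2T_{i+1}$ and so $k_i \le L_i$.

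To bound the length at stage $k_i$, observe that at each intermediate stage $k$ with $k_{i+1}\le k<k_i$, the curves $\gamma_-,\gamma_+$ lie in $\Gam{k}$, so Claim~\ref{claim: bounded projection} bounds the tight ladder length by $K+2Mk$, and Claim~\ref{claim: ladder increase} then bounds the resulting length increase by $2(K+2Mk)\xi(S)$. Summing the arithmetic progression $\sum_{k=k_{i+1}}^{k_i-1}(K+2Mk)$ with $k_i-k_{i+1}\le 2T_{i+1}$ and $k_{i+1}\le L_{i+1}$, then multiplying by $2\xi(S)$, yields the bound $4T_{i+1}(K+2ML_{i+1})\xi(S)+8MT_{i+1}^2\xi(S)$, which added to the inductive length bound $T_{i+1}$ gives exactly $T_i$. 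The main obstacle is this bookkeeping: each of the three earlier claims contributes one ingredient of the recursion (projection diameter in terms of the stage, ladder length to sequence-length increase, sequence length to brick count), and one must use the arithmetic-progression estimate $\sum k \le \tfrac{N(2a+N-1)}{2}$ rather than the naive upper bound $Nk_{\max}$ on each summand in order to obtain the coefficient $8MT_{i+1}^2\xi(S)$ rather than a worse $16MT_{i+1}^2\xi(S)$.
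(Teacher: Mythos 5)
Your proof is correct and follows essentially the same route as the paper's: the same reverse induction, with Claim~\ref{claim: length to bricks} bounding the number of stages between $k_{i+1}$ and $k_i$, Claims~\ref{claim: bounded projection} and~\ref{claim: ladder increase} bounding the per-stage growth, and the same arithmetic-progression summation producing the recursion for $T_i$. The only differences are cosmetic (indexing the step as $i+1\to i$ rather than $i\to i-1$, and using $\xi(S)$ in place of $\xi(Y)=i-1$, which the paper also does in its final estimate).
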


We will prove this by a reverse induction on $i$.
The base case is when $i=\xi(S)$.
The initial annulus system $\W{0}$ contains a brick with base surface~$S$, so the first term of the $\ksep$\hyp{}complexity is $\N{0}{\xi(S)}=1$.
The length of the sequence of multicurves defined by $\W{0}$ is at most $\lvert a \rvert + \lvert b \rvert +1 \le 2\xi(S) + 1$.
To get from $\W{0}$ to~$\W{1}$, we add a tight geodesic in~$S$.
The first term of the $\ksep$\hyp{}complexity is now~0, so $k_{\xi(S)}=1=L_{\xi(S)}$.
By assumption, $d_S(a, b) \le K$, so the geodesic we add has length at most $K$.
Hence, by Claim~\ref{claim: ladder increase}, we add at most $2K\xi(S)-2$ to the length of the sequence of multicurves, so the length of the sequence of multicurves corresponding to~$\W{1}$ is at most $(2K+2)\xi(S)=T_{\xi(S)}$.

Now, assume for induction that $k_i \le L_i$ and that the length of the sequence of curves corresponding to $\W{k_i}$ is at most $T_i$.
To get from $\W{k_i}$ to $\W{k_{i-1}}$, we successively add tight ladders in bricks whose base surfaces are witnesses of complexity $i$.
There are $\N{k_i}{i-1}$ of these bricks, where $\N{k_i}{i-1}$ is the $(i-1)$-th term of the $\ksep$\hyp{}complexity.
Hence, $k_{i-1}-k_i=\N{k_i}{i-1}$.
Moreover, by Claim \ref{claim: length to bricks}, $\N{k_i}{i-1} \le \frac{3}{2} T_i +\xi(S) \le 2 T_i$.
Hence, $k_{i-1} \le k_i + 2T_i \le L_i +2T_i = L_{i-1}$. 

Between $\W{k}$ and $\W{k+1}$, for $k_i \le k \le k_{i-1}-1$, we add at most $2(K+2Mk)(i-1)$ to the length of the sequence of multicurves, from Claim \ref{claim: bounded projection} and Claim \ref{claim: ladder increase}.
The length of the sequence of multicurves determined by $\W{k_{i-1}}$ is hence at most:

\begin{align*}
&T_i + 2(K+2Mk_i)(i-1) + 2(K+2M(k_i+1))(i-1) + \dots \\
&\qquad\qquad\qquad\qquad\qquad\qquad + 2(K+2M(k_i+\N{k_i}{i-1}-1))(i-1)\\
=&T_i + 2\N{k_i}{i-1}(K+2Mk_i)(i-1) +4M(0+1+ \dots + (\N{k_i}{i-1}-1))(i-1)\\
=&T_i + 2\N{k_i}{i-1}(K+2Mk_i)(i-1) +2M(\N{k_i}{i-1}-1)\N{k_i}{i-1}(i-1)\\
\le&T_i + 2\cdot2T_i(K+2ML_i)(i-1) +2M\cdot2T_i\cdot2T_i(i-1)\\
\le&T_i + 4T_i(K+2ML_i)\xi(S) +8MT_i^2\xi(S)=T_{i-1}.
\end{align*}

\qedclaim

In particular, the length of the sequence of multicurves determined by $\W{k_1}$ is at most $T_1$, which is a function of $K$ and $\xi(S)$.
Since the $\ksep$\hyp{}complexity at this stage is $(0, 0, \dots, 0)$, this sequence of multicurves in fact gives a path in $\ksep_{\mathcal{G}}(S)$ joining $a$ and $b$.
Taking $K'=T_1$, this completes the proof of Proposition~\ref{proposition: ksep bound}, and hence also of Theorem~\ref{theorem: ksep hhs}.

\section{The quasi\hyp{}isometry} \label{section: sep}

We now relate $\ksep_\G(S)$ to $\G(S)$ to prove Theorem \ref{theorem: g hhs}.
Since every vertex of $\G(S)$ is also a vertex of $\ksep_\G(S)$, there is a natural inclusion $\phi\colon \G(S) \rightarrow \ksep_\G(S)$.
Again, we are considering $\G(S)$ and $\ksep_\G(S)$ as discrete sets of vertices with the induced combinatorial metric.
Moreover, we again assume all curves are pairwise in minimal position.

\begin{proposition} \label{proposition: sep qi ksep}
Let $S$ be a surface and $\G(S)$ a \mygraph.
Then the inclusion $\phi\colon \G(S) \rightarrow \ksep_\G(S)$ is a quasi\hyp{}isometry.
\end{proposition}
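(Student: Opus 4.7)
The plan is to show that $\phi$ is a quasi\hyp{}isometry by verifying two properties: first, that $\phi$ is coarsely Lipschitz; second, that there exists a coarsely Lipschitz retraction $r \colon \ksep_\G(S) \to \G(S)$ with $r \circ \phi$ uniformly close to the identity on $\G(S)$. Given these, for $a, b \in \G(S)$ the inequality $d_{\G(S)}(a,b) = d_{\G(S)}(r\phi(a), r\phi(b)) \leq K_1 d_{\ksep_\G(S)}(\phi(a),\phi(b)) + K_1$ complements the Lipschitz upper bound, yielding a quasi\hyp{}isometric embedding; and coarse density of the image is automatic from the fact that $d_{\ksep_\G(S)}(a', r(a')) \leq K_2$ for every $a' \in \ksep_\G(S)$.

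The Lipschitz direction is fairly direct from what has been established. If $a, b$ are adjacent vertices of $\G(S)$, then by property~(4) of Definition~\ref{definition: mygraph} we have $i(a,b) \leq R$. Standard estimates (e.g.\ via \cite[Lemma~2.3]{mm2}) then give a uniform bound on $d_X(a, b)$, in terms of $R$ alone, for every $X \in \X$. Applying Proposition~\ref{proposition: ksep bound} (the uniqueness axiom for $\ksep_\G(S)$ already verified in Section~\ref{subsection: axiom 9}) converts this uniform bound on all subsurface projections into a uniform bound on $d_{\ksep_\G(S)}(\phi(a), \phi(b))$. Concatenating along edge paths, $\phi$ is coarsely Lipschitz with constants depending only on $R$ and $\G(S)$.

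The main obstacle is the construction of $r$. A vertex of $\ksep_\G(S)$ is merely a multicurve $a'$ none of whose complementary components lies in $\X$, which is typically strictly weaker than the vertex condition defining $\G(S)$. Given such an $a'$, one must produce a vertex $r(a') \in \G(S)$ at bounded $\ksep_\G(S)$\hyp{}distance, and in a manner that depends sufficiently canonically on $a'$ to ensure coarse Lipschitzness. The natural strategy is to modify $a'$ by a controlled sequence of the generating moves of $\ksep_\G(S)$---curve additions, curve removals, and flip moves---until the resulting multicurve satisfies the $\G(S)$\hyp{}vertex condition, the number of moves being bounded by a function of $\xi(S)$ alone. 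A baseline observation motivating this is that $a'$ already cuts every witness in $\X$ nontrivially by Claim~\ref{claim: ksep sep holes}, so that the components of $S \sminus a'$ give a well\hyp{}defined combinatorial template on which to perform such local completions. The hard part is not producing some retract for any particular \mygraph{}---many ad hoc constructions are available per example---but arranging a single procedure that works uniformly across the entire class of \mygraph{}s defined in Definition~\ref{definition: mygraph}; this uniformity is the real technical content of the step, and once it has been achieved the quasi\hyp{}isometry follows by the bookkeeping outlined in the first paragraph.
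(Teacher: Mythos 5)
Your scaffolding is the same as the paper's (coarsely Lipschitz inclusion plus a coarsely Lipschitz retraction), and your argument for the Lipschitz half is a valid variant: the paper bounds $d_{\ksep_\G(S)}(\phi(a),\phi(b))$ for adjacent $a,b$ by a pure cocompactness argument (finitely many $\mcg(S)$\hyp{}orbits of pairs with $i(a,b)\le R$, plus connectedness of $\ksep_\G(S)$), whereas you route through Proposition~\ref{proposition: ksep bound}; both work. The problem is the other half. You correctly identify that the construction of the retraction is ``the real technical content of the step,'' and then you do not supply it. That is a genuine gap, not a stylistic omission: without it the lower bound $d_{\G(S)}(a,b)\lesssim d_{\ksep_\G(S)}(\phi(a),\phi(b))$ and the coarse surjectivity are both unproved, i.e.\ essentially the whole proposition.

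Concretely, what is missing is the following. First, one needs a uniform $N$ such that every vertex $a$ of $\ksep_\G(S)$ has some $b\in\G(S)$ with $i(a,b)\le N$ (this is a short cocompactness argument, Claim~\ref{claim: n exists}). Second, and this is the heart of the matter, one must show that the set $C_a=\{b\in\G(S)\mid i(a,b)\le N\}$ has uniformly bounded diameter \emph{in $\G(S)$} (Lemma~\ref{lemma: coarse lipschitz retract}); only then is $a\mapsto C_a$ coarsely well defined, and coarse Lipschitzness of the retraction then follows by another finiteness argument over adjacent pairs. The paper proves the diameter bound by working component by component in $S\sminus a$: since each component $Y_i$ is not a witness, either $\xi(Y_i)\ge1$ and some vertex $c$ of $\G(S)$ misses $Y_i$ (so one can push $b\cap Y_i$ onto $b'\cap Y_i$ by a mapping class supported in $Y_i$ and control the distance via $c$), or $Y_i\cong S_{0,3}$ and one must separately kill boundary twists using powers of Dehn twists and auxiliary vertices missing the relevant annuli. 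None of this is replaced by your sketch. Moreover, the strategy you do gesture at --- reaching a $\G(S)$\hyp{}vertex from an arbitrary $\ksep_\G(S)$\hyp{}vertex by a bounded sequence of additions, removals and flips --- is not the argument used and is not obviously viable in the stated generality: vertices of $\G(S)$ can be of a very constrained form (e.g.\ single separating curves), and there is no reason a bounded number of $\ksep_\G(S)$\hyp{}moves, chosen canonically, should terminate at one; the intersection\hyp{}number definition of $C_a$ sidesteps exactly this difficulty.
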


\begin{lemma} \label{lemma: g to k lipschitz}
The inclusion $\phi\colon \G(S) \rightarrow \ksep_\G(S)$ is Lipschitz.
\end{lemma}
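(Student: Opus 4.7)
The plan is to deduce the Lipschitz bound from the hierarchically hyperbolic structure on $\ksep_\G(S)$ provided by Theorem~\ref{theorem: ksep hhs} together with the resulting HHS distance formula. Since $\phi$ acts as the identity on vertex sets, it suffices to show there is a constant $K$ such that $d_{\ksep_\G(S)}(a,b) \le K$ whenever $a,b$ are adjacent in $\G(S)$. By condition~(4) of Definition~\ref{definition: mygraph}, any such pair satisfies $i(a,b) \le R$ for a constant $R$ depending only on $\G(S)$, so the task reduces to bounding distance in $\ksep_\G(S)$ for pairs of multicurves of bounded intersection.

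The first step I would carry out is to establish the classical projection bound: there is a function $f$ such that for any witness $X \in \X$ and any two multicurves $a,b$ in $S$ with $i(a,b) \le R$, one has $d_X(\pi_X(a), \pi_X(b)) \le f(R)$. This follows directly from the definition of subsurface projection recalled in Section~\ref{subsection: subsurface projection}: each arc of intersection of $a$ with $X$ contributes at most a controlled number of curves to $\pi_X(a)$, and likewise for $b$, so the union $\pi_X(a) \cup \pi_X(b)$ is controlled in terms of $i(a \cup b, \partial_S X)$ and $i(a,b)$, all of which are bounded in terms of $R$.

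The second step is to apply the distance formula \cite[Theorem~4.5]{hhs2} to $\ksep_\G(S)$, whose hypotheses are furnished by Theorem~\ref{theorem: ksep hhs}. This yields constants $C_0$, $K_1$, $K_2$ such that for every $C \ge C_0$,
\[ d_{\ksep_\G(S)}(a,b) \asymp_{K_1,K_2} \sum_{X \in \X} \bigl[d_X(\pi_X(a), \pi_X(b))\bigr]_C. \]
Choosing $C > \max\lbrace C_0, f(R) \rbrace$ makes every term of the sum vanish for any adjacent pair $a,b$ in $\G(S)$, so the formula gives $d_{\ksep_\G(S)}(a,b) \le K_2$. Taking this $K_2$ as the Lipschitz constant completes the proof.

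Given the machinery already assembled, there is no real obstacle: the only substantive input beyond Theorem~\ref{theorem: ksep hhs} is the projection bound in terms of intersection number, which is standard and essentially combinatorial. One could alternatively give a direct construction by completing $a$ and $b$ to pants decompositions, then connecting them via pants moves (which are flip moves in $\ksep_\G(S)$) and invoking a classical bound on pants graph distance in terms of intersection number; but the distance-formula route is more economical, and the only mild care needed is to pick the cutoff $C$ after the uniform projection bound is in hand.
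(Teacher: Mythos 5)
Your argument is correct, but it is genuinely different from the paper's. The paper's proof is a short finiteness argument: condition~(4) of Definition~\ref{definition: mygraph} bounds $i(a,b)$ for adjacent $a,b\in\G(S)$, so there are only finitely many such pairs up to $\mcg(S)$; since $\mcg(S)$ acts isometrically on the connected graph $\ksep_\G(S)$, the maximum $D$ of the (finitely many) distances realized by orbit representatives bounds $d_{\ksep_\G(S)}(\phi(a),\phi(b))$ for every adjacent pair. You instead route through the hierarchically hyperbolic structure on $\ksep_\G(S)$: bounded intersection number gives uniformly bounded subsurface projections to every witness, and then the distance formula with a large cutoff forces $d_{\ksep_\G(S)}(a,b)\le K_2$. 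This is legitimate and non-circular, since Theorem~\ref{theorem: ksep hhs} is established before this lemma is needed; indeed you could streamline it by invoking Proposition~\ref{proposition: ksep bound} (the uniqueness axiom) directly with $K=f(R)$, bypassing the distance formula altogether. Two small remarks. First, your justification of the projection bound is slightly off target: the issue is not how many curves each arc contributes to $\pi_X(a)$ (each projection already has diameter at most $2$), but the distance in $\C(X)$ between $\pi_X(a)$ and $\pi_X(b)$, which is controlled by $i(a,b)$ alone via the standard surgery estimates; also note that no annular case arises because $\X$ contains no annuli. Second, the trade-off: the paper's argument is more elementary and independent of the hard Proposition~\ref{proposition: ksep bound}, while yours yields an in-principle explicit constant and would survive in settings lacking a cocompact isometric group action.
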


\begin{proof}
Suppose that $a$ and $b$ are adjacent vertices of~$\G(S)$.
We want to bound the distance between their images in~$\ksep_\G(S)$.
By assumption, there is an upper bound on the intersection number of any pair of adjacent vertices of~$\G(S)$.
Hence, up to the action of~$\mcg(S)$, there are only finitely many such pairs.
Moreover, since $\mcg(S)$ acts isometrically on $\ksep_\G(S)$, these finitely many pairs give all possible distances in $\ksep_\G(S)$ between adjacent vertices of~$\G(S)$.
As observed in Claim~\ref{claim: ksep connected}, $\ksep_\G(S)$ is connected, so there is a maximal distance $D$ in $\ksep_\G(S)$ between the vertices in any such pair.
Hence, if $a$ and $b$ are adjacent in~$\G(S)$, then $d_{\ksep_\G(S)}(\phi(a), \phi(b)) \le D$, and by the triangle inequality, $\phi$~is $D$\hyp{}Lipschitz.
\end{proof}

\begin{claim} \label{claim: n exists}
There exists $N$ such that for any vertex $a$ of $\ksep_\G(S)$, there is some vertex $b$ of $\G(S)$ satisfying $i(a, b) \le N$.
\end{claim}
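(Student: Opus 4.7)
The plan is to exploit the $\mcg(S)$\hyp{}equivariance of both graphs and reduce to finitely many cases. The key input is the change\hyp{}of\hyp{}coordinates principle: two multicurves in $S$ lie in the same $\mcg(S)$\hyp{}orbit if and only if their cut surfaces are homeomorphic by a homeomorphism respecting the boundary identifications. Since any multicurve in $S$ has at most $\xi(S)$ components, and the topological types of the resulting cut surfaces are correspondingly bounded in number, multicurves in $S$ fall into only finitely many $\mcg(S)$\hyp{}orbits. In particular, the vertex set of $\ksep_\G(S)$ decomposes into finitely many $\mcg(S)$\hyp{}orbits.

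Given this, I would argue as follows. Let $a_1, \dots, a_m$ be a set of $\mcg(S)$\hyp{}orbit representatives of the vertices of $\ksep_\G(S)$. Since $\G(S)$ is connected, it is non\hyp{}empty, so for each $i$ we may pick a vertex $b_i$ of $\G(S)$, and set $N = \max_{1 \le i \le m} i(a_i, b_i)$. For an arbitrary vertex $a$ of $\ksep_\G(S)$, write $a = g \cdot a_i$ for some $g \in \mcg(S)$ and some $i$. By condition~(3) of Definition~\ref{definition: mygraph}, the vertex set of $\G(S)$ is preserved by $\mcg(S)$, so $b \defeq g \cdot b_i$ is a vertex of $\G(S)$. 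Since intersection number of multicurves is invariant under homeomorphisms of $S$, we get $i(a, b) = i(g \cdot a_i, g \cdot b_i) = i(a_i, b_i) \le N$, as required.

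There is no serious obstacle: the argument is essentially a compactness argument, with the only nontrivial ingredient being the finiteness of $\mcg(S)$\hyp{}orbits of multicurves. I note that, in the same spirit as Lemma~\ref{lemma: g to k lipschitz}, one could alternatively phrase this as saying that there are finitely many $\mcg(S)$\hyp{}orbits of pairs $(a, b)$ with $a \in \ksep_\G(S)$ and $b \in \G(S)$ a ``nearest'' vertex, but the above direct form suffices.
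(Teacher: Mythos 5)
Your proof is correct and is essentially the paper's own argument: both take a finite set of $\mcg(S)$\hyp{}orbit representatives of vertices of $\ksep_\G(S)$, fix a vertex (or vertices) of $\G(S)$, define $N$ as the maximum of the resulting finitely many intersection numbers, and transport by the group action using the $\mcg(S)$\hyp{}invariance of $\G(S)$ and of intersection number. The only cosmetic difference is that the paper uses a single vertex $c$ of $\G(S)$ for all representatives, whereas you allow a different $b_i$ for each; this changes nothing.
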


Up to the action of~$\mcg(S)$, there are finitely many vertices of~$\ksep_\G(S)$.
Choose any vertex $c$ of~$\G(S)$, and let $N$ be the maximal number of times $c$ intersects any of the vertices in a (finite) list of representatives of $\mcg(S)$\hyp{}orbits in~$\ksep_\G(S)$.

\qedclaim

\begin{lemma} \label{lemma: coarse lipschitz retract}
Let $N$ be as in Claim~\ref{claim: n exists}.
Given a vertex~$a$ of~$\ksep_\G(S)$, define $C_a = \lbrace b \in \G(S) \mid i(a, b) \le N \rbrace$.
There exists $N'$ depending only on $N$ and $S$ such that $\diam_{\G(S)}(C_a) \le N'$.
\end{lemma}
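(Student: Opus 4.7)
The plan is to reduce the statement to a finiteness argument based on the change-of-coordinates principle in surface topology. First I would observe that, up to the action of $\mcg(S)$, there are only finitely many pairs $(a, b)$ with $a$ a vertex of $\ksep_\G(S)$, $b$ a vertex of $\G(S)$, and $i(a, b) \le N$: the topological type of the configuration $(S, a \cup b)$ is determined by finite combinatorial data once intersection numbers are bounded. Let $(a_1, b_1), \dots, (a_m, b_m)$ be a list of orbit representatives. Since $\mcg(S)$ acts on $\G(S)$ by isometries, $\diam_{\G(S)}(C_a)$ depends only on the $\mcg(S)$-orbit of $a$, so it is enough to bound it for finitely many choices of $a$.

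For such an $a$, the set $C_a$ decomposes into at most $m$ orbits of $\text{Stab}_{\mcg(S)}(a)$, and representatives in distinct orbits lie at finite (hence, by finiteness, uniformly bounded) $\G(S)$-distance. The problem therefore reduces to bounding the $\G(S)$-diameter of a single $\text{Stab}(a)$-orbit in $C_a$. The stabilizer $\text{Stab}(a)$ is generated, up to a finite subgroup, by the Dehn twists $T_\alpha$ along curves $\alpha$ of $a$ together with mapping classes supported on components of $S \sminus a$. For each such generator $h$ and each witness $X \in \X$, I would establish that $d_X(b, h b)$ is uniformly bounded in terms of $N$ and $S$: $T_\alpha$ fixes $\alpha$ as a curve of $S$, so acts parabolically on $\C(X)$ when $\alpha \subseteq X$, trivially when $\alpha$ is disjoint from $X$, and with bounded displacement in the transverse case via Behrstock's inequality. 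The hypothesis (Definition~\ref{definition: mygraph}~(5)) that no annulus is a witness is essential here, as it rules out unbounded behaviour being forced by annular projections. Mapping classes supported on a non-witness component $Y$ of $S \sminus a$ fix $\partial_S Y \subseteq a$ pointwise and similarly give bounded projections to every witness.

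The subtlest step --- and the main obstacle --- is converting these bounded subsurface projection estimates into a bounded $\G(S)$-diameter without invoking the quasi-isometry of Proposition~\ref{proposition: sep qi ksep}, which is being proved using the present lemma. Hierarchical hyperbolicity of $\ksep_\G(S)$ (Theorem~\ref{theorem: ksep hhs}) gives a uniformly bounded $\ksep_\G(S)$-diameter for the orbit, but transferring this to $\G(S)$ must be done directly. I expect this to require a combinatorial argument that constructs short $\G(S)$-paths between orbit elements through a careful sequence of moves staying within $\G(S)$, using the connectedness of $\G(S)$ together with finitely many $\mcg(S)$-orbit types of the intermediate configurations encountered; the key point will be to avoid circularity by not appealing to the inverse Lipschitz estimate for $\phi$ that this lemma feeds into.
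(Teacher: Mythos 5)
There is a genuine gap, and it sits exactly where you flag it. Your opening reductions are sound and match the paper's general use of finiteness: up to $\mcg(S)$ there are finitely many pairs $(a,b)$ with $i(a,b)\le N$, so it suffices to treat finitely many $a$, and $C_a$ splits into finitely many $\mathrm{Stab}(a)$-orbits whose representatives are uniformly close. But the core of the lemma --- actually bounding the $\G(S)$-diameter of such an orbit --- is not carried out. Two problems arise. First, bounding $d_X(b,hb)$ for each \emph{generator} $h$ of $\mathrm{Stab}(a)$ only makes the orbit map coarsely Lipschitz from the word metric on the (infinite) stabilizer; it does not bound the diameter of the orbit. (In fact uniform projection bounds for \emph{all} of $C_a$ are available much more cheaply: $a$ cuts every witness $X$ since its complementary components are non-witnesses, and every $b\in C_a$ satisfies $i(a,b)\le N$, so $\pi_X(b)$ is uniformly close to $\pi_X(a)$ for every $X\in\X$. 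So the stabilizer-generator analysis is a detour.) Second, and decisively, the conversion from ``uniformly bounded projections to all witnesses'' to ``uniformly bounded $d_{\G(S)}$'' is precisely the content you defer to ``a combinatorial argument'' that you do not supply; the only tools in sight for that conversion are Axiom 9 for $\ksep_\G(S)$ together with the quasi-isometry $\phi$, and the latter is what this lemma is feeding into, so that route is circular, as you yourself note.

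The paper's proof does not pass through subsurface projections at all. It works directly with intersection numbers, using repeatedly that bounded intersection number implies bounded $\G(S)$-distance (finitely many orbits of pairs, plus connectedness of $\G(S)$). Writing $S\sminus a=Y_1\cup\dots\cup Y_k$, it adjusts $b$ inside one $Y_i$ at a time by a mapping class $f_i$ supported on $Y_i$ chosen so that $f_i\cdots f_1(b)\cap Y_i$ has bounded intersection with $b'\cap Y_i$. The point that makes each such adjustment a bounded move in $\G(S)$ is that $Y_i$ is \emph{not} a witness: if $\xi(Y_i)\ge1$ there is a vertex $c$ of $\G(S)$ contained in $S\sminus Y_i$, chosen with bounded intersection with $b$, and since $b$ and $f_i(b)$ agree outside $Y_i$, both have bounded intersection with $c$, giving a two-step bounded path $b,\,c,\,f_i(b)$; if $Y_i\cong S_{0,3}$ the same device is applied to the three boundary annuli (which are non-witnesses by Definition~\ref{definition: mygraph}(5)) to kill the boundary twists one at a time. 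After all components are treated, $g_k(b)$ has bounded intersection with $b'$ and one concludes. If you want to salvage your outline, this component-by-component surgery with ``bridge'' vertices supported off the non-witness pieces is the missing combinatorial argument; it replaces, rather than supplements, the projection estimates.
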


\begin{proof}
Let $Y_1, \dots, Y_k$ be the components of $S \sminus a$, where $k$ is bounded above by~$-\chi(S)$.
Suppose that $b$ and $b'$ are two elements of~$C_a$.
We will show that there exists $N'$ as required which is an upper bound on their distance in~$\G(S)$, not depending on $b$ and~$b'$.
Note that for any connected union $Y$ of components of $S \sminus a$, the intersection $b \cap Y$ is a collection of at most $N$ arcs and at most $\xi(Y)$ simple closed curves in~$Y$.
Although there are technically uncountably many ways to place the endpoints of the arcs, combinatorially there are finitely many possibilities for $b \cap Y$ up to $\mcg(Y)$, depending on~$N$.

By the definition of $\ksep_\G(S)$, each $Y_i$ is not a witness for $\G(S)$.
In particular, either $Y_i$ is a copy of $S_{0,3}$, or $\xi(Y_i)\ge1$ and there exists a vertex $c$ of $\G(S)$ which does not intersect $Y_i$.
Moreover, in the latter case, since there are finitely many possibilities for $b \cap (S \sminus Y_i)$ up to $\mcg(Y_i)$ (with given endpoints in $\partial_S Y_i$), we can choose $c$ so that $i(c, b)$ is bounded in terms of $N$ and $\xi(S)$.

We proceed to successively adjust $b$ in each of the $Y_i$ to bring it close to $b'$.
Note that in order to give an upper bound on the distance in $\G(S)$ between two vertices, it is sufficient to bound their intersection number, since  $\G(S)$ is connected and up to the action of $\mcg(S)$ there are only finitely many pairs of vertices of $\G(S)$ with a given intersection number.

Consider the intersection of $b$ and $b'$ with~$Y_1$.
Since $b$ and $b'$ each intersect $a$ at most $N$ times, there are finitely many possibilities, up to $\mcg(Y_1)$, for each of these.
In particular, we can choose representatives from their respective $\mcg(Y_1)$~orbits with intersection number bounded above by a constant depending on $N$ and $\xi(Y_1) \le \xi(S)$.

\emph{Case 1: $\xi(Y_1)\ge1$.} We can apply a mapping class $\bar{f}_1$ on $Y_1$ to move $b \cap Y_1$ to have bounded intersection with $b' \cap Y_1$.
We can then extend $\bar{f}_1$ to a mapping class $f_1$ on $S$ by applying the identity on $S \sminus Y_1$, to obtain a new vertex $f_1(b)$ of~$\G(S)$.
Since $Y_1$ is not a witness for $\G(S)$ and is not a pair of pants, there exists a vertex $c$ of $\G(S)$ contained in $S \sminus Y_1$ (possibly peripheral in this subsurface).
Moreover, by the discussion above, we can assume that $c$ has bounded intersection with~$b$, where the bound depends only on $N$ and~$\xi(S)$. 
Since $b$ and $f_1(b)$ coincide in $S \sminus Y_1$, $c$ has bounded intersection with both of these multicurves, and hence we have a bound on the distance from $b$ to $f_1(b)$, via~$c$, depending only on $N$ and~$\xi(S)$.

\emph{Case 2: $\xi(Y_1)=0$.} Here $Y_1$ is a copy of $S_{0, 3}$, and $b \cap Y_1$ and $b' \cap Y_1$ are each collections of at most $N$ arcs.
If we allow isotopies to preserve the boundary setwise rather than pointwise, then any two isotopy classes of arcs on $S_{0, 3}$ intersect at most twice.
Hence, up to twists on the boundary, the number of intersections between $b$ and $b'$ inside $Y_1$ is bounded in terms of $N$.
However, we also need to deal with these twists.

Let the boundary components of $Y_1$ be $\gamma_1$, $\gamma_2$, $\gamma_3$, and take an annular neighbourhood $A_i$ of each $\gamma_i$ inside $Y_1$.
We can assume, by isotoping intersections into these neighbourhoods, that in the complement of $A_1 \cup A_2 \cup A_3$ in~$Y_1$, any pair of arcs from $b$ and $b'$ intersect at most twice.
Now, we can apply a power $T_{\gamma_1}^{n_1}$ of a Dehn twist on $A_1$ so that $T_{\gamma_1}^{n_1}(b) \cap A_1$ has bounded intersection with $b' \cap A_1$.
Since $A_1$ is not a witness for $\G(S)$, there exists a vertex $c_1$ of $\G(S)$ which has trivial subsurface projection to $\C(A_1)$.
Then $c_1$ can be isotoped into $S \sminus A_1$ (where it may be peripheral).
As above, we can choose $c_1$ to have bounded intersection with $b$ and $T_{\gamma_1}^{n_1}(b)$.

We can now repeat this for $A_2$ and $A_3$, obtaining a sequence $b$, $c_1$, $T_{\gamma_1}^{n_1}(b)$, $c_2$, $T_{\gamma_2}^{n_2}T_{\gamma_1}^{n_1}(b)$, $c_3$, $T_{\gamma_3}^{n_3}T_{\gamma_2}^{n_2}T_{\gamma_1}^{n_1}(b)$, where each consecutive pair of vertices has uniformly bounded intersection.
Hence this gives us a bound on the distance in $\G(S)$ between $b$ and $f_1(b)=T_{\gamma_3}^{n_3}T_{\gamma_2}^{n_2}T_{\gamma_1}^{n_1}(b)$.


We now move on to $Y_2$.
We have the same two cases as before.
If $\xi(Y_2)\ge1$, then we apply a mapping class on $Y_2$ moving $f_1(b) \cap Y_2 = b \cap Y_2$ to have bounded intersection with $b' \cap Y_2$, and extend to a mapping class $f_2$ on~$S$.
We then find a vertex of $\G(S)$ contained in $S \sminus Y_2$ which has bounded intersection with $f_1(b)$ and~$b'$.
Note that since $i(f_1(b), a) \le i(b, a)$ (and every $Y_i$ is a subsurface of the same~$S$) the upper bounds on intersection numbers remain the same as for~$Y_1$.
If $\xi(Y_2)=0$, then we use the same procedure as above to remove twists about boundary components of~$Y_2$, and we again have a mapping class~$f_2$, which is now a product of Dehn twists.


We continue to repeat this procedure for every $Y_i$.
We will then have a sequence $g_0(b), g_1(b), \dots, g_k(b)$ of vertices of $\G(S)$, where $g_0=\text{id}$ and $g_i=f_i \circ f_{i-1} \circ \dots \circ f_1$ for $i \ge 1$.
There exists an upper bound, depending only on $N$ and $\xi(S)$, on the distance in $\G(S)$ between $g_{i-1}(b)$ and $g_i(b)$, for $1 \le i \le k$.
Moreover, $i(g_k(b), b')$ is bounded in terms of $N$ and $\xi(S)$ so we have an upper bound on the distance between $b$ and $b'$ as required.
\end{proof}

\begin{proof}[Proof of Proposition \ref{proposition: sep qi ksep}]
Firstly, the map $\phi\colon \G(S) \rightarrow \ksep_\G(S)$ is coarsely surjective as follows.
By Claim \ref{claim: n exists}, there exists $N$ such that for every vertex $a$ of $\ksep_\G(S)$, there is some vertex $b$ of $\G(S)$ satisfying $i(a, b) \le N$.
Moreover, since there are finitely many vertices of $\ksep_\G(S)$ up to $\mcg(S)$, and the property of being a vertex of $\G(S)$ is $\mcg(S)$\hyp{}invariant, this implies that there exists $R$ such that every vertex $a$ of $\ksep_\G(S)$ is at distance at most $R$ from some vertex of$~\G(S)$.

By Lemma \ref{lemma: g to k lipschitz}, the map $\phi$ is Lipschitz.

We claim that the map associating to each vertex $a$ of $\ksep_\G(S)$ the set $C_a$ defined in Lemma \ref{lemma: coarse lipschitz retract} is a coarse Lipschitz retract for $\phi$.
We have shown in Lemma \ref{lemma: coarse lipschitz retract} that there is an upper bound $N'$ on the diameter of any $C_a$, depending only on $N$ and~$\xi(S)$.
It remains to prove that if $a$ and $b$ are adjacent in $\ksep_\G(S)$, then $C_a$ and $C_b$ are close, that is, there is an upper bound on $\diam(C_a \cup C_b)$.
Up to the action of $\mcg(S)$, there are finitely many pairs of adjacent vertices $a$, $b$ of $\ksep_\G(S)$ (for example, since $i(a, b) \le 2$).
Notice, moreover, that if $f$ is a mapping class on $S$, then $C_{f(a)}=f(C_a)$.
Hence, it is also true that up to $\mcg(S)$ there are finitely many pairs $C_a$, $C_b$ where $a$ and $b$ are adjacent in~$\ksep_\G(S)$.
Since each $C_a$ has diameter at most~$N'$, each $C_a \cup C_b$ also has finite diameter.
Taking a representative for each $\mcg(S)$ orbit of pairs $C_a$, $C_b$, we can take the maximum of the finite list of finite diameters, giving an upper bound $N''$ on the diameter of $C_a \cup C_b$ for adjacent $a$ and~$b$.
Hence, given any path $a_0, \dots, a_n$ in $\ksep_\G(S)$, we may choose a representative from each $C_{a_i}$, for $0 \le i \le n$, to obtain a sequence of vertices of $\G(S)$ joining $a_0$ and $a_n$ where the distance between consecutive terms is at most~$N''$.
In particular, if $a_0=\phi(b)$ and $a_n=\phi(b')$ then we can choose $b$ from $C_{a_0}$ and $b'$ from $C_{a_n}$, giving the required lower bound on $d_{\ksep_\G(S)}(\phi(b), \phi(b'))$ in terms of $d_{\G(S)}(b, b')$.
\end{proof}

\begin{proof}[Proof of Theorem~\ref{theorem: g hhs}]
This is almost immediate from Theorem~\ref{theorem: ksep hhs} and Proposition~\ref{proposition: sep qi ksep} as follows.

As observed in \cite[Section~1.4]{hhs1}, if $(\Lambda, \mathfrak{S})$ is a hierarchically hyperbolic space with respect to projections $\pi_X\colon \Lambda \rightarrow \C(X)$ for $X \in \mathfrak{S}$, and if $f\colon \Lambda' \rightarrow \Lambda$ is a quasi\hyp{}isometry, then $(\Lambda', \mathfrak{S})$ is a hierarchically hyperbolic space.
More specifically, the hierarchically hyperbolic structure on $\Lambda$ has the same hyperbolic spaces $\C(X)$ for $X \in \mathfrak{S}$, and the projection maps are the compositions \mbox{$\pi_X \circ f$}.

Here, the quasi\hyp{}isometry is the inclusion $\phi\colon \G(S) \rightarrow \ksep_\G(S)$.
The projection maps $\pi_X \circ \phi\colon \G(S) \rightarrow \C(X)$ are still the ordinary subsurface projections, and the hierarchically hyperbolic structure for~$\G(S)$ is essentially the same as for~$\ksep_\G(S)$.
\end{proof}

\appendix

\section{A hierarchically hyperbolic structure on the arc graph} \label{appendix: arc graph hhs}

In this appendix, we construct a quasi\hyp{}isometry between the arc graph of a surface with boundary and a \mygraph{} with the same set of witnesses, showing that the arc graph has a hierarchically hyperbolic structure with respect to subsurface projections to its witnesses (this projection is defined by surgeries in the same way as for curves).
The fact that the arc graph has a distance formula in terms of subsurface projections to witnesses was already proved by Masur and Schleimer in~\cite{ms}.
They prove moreover that the arc graph is Gromov hyperbolic, so that it also has a trivial hierarchically hyperbolic structure with repect to the identity map to itself.

\begin{definition}
Let $S$ be a surface with boundary and $\Delta$ a union of (at least one but not necessarily all) boundary components of $S$.
The arc graph with respect to $\Delta$, $\A(S, \Delta)$, has:
\begin{itemize}
\item a vertex for every properly embedded arc in $S$ with both endpoints in $\Delta$, up to isotopies which allow endpoints to move inside the boundary,
\item an edge between two distinct vertices if they have disjoint representatives.
\end{itemize}
\end{definition}

We will assume that $S$ and $\Delta$ are always such that $\A(S, \Delta)$ is connected.
The witnesses for $\A(S, \Delta)$ are precisely those essential subsurfaces which contain $\Delta$ and are not homeomorphic to $S_{0,3}$ (note that we do not consider a peripheral annulus to be an essential subsurface).

\begin{theorem} \label{theorem: arc graph hhs}
The graph $\A(S, \Delta)$ is a hierarchically hyperbolic space with respect to subsurface projections to its witnesses, whenever it is connected.
\end{theorem}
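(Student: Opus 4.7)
The plan is to construct a twist\hyp{}free multicurve graph $\G = \G(S,\Delta)$ whose set of witnesses coincides with that of $\A(S,\Delta)$, exhibit a quasi\hyp{}isometry between $\A(S,\Delta)$ and $\G$, and conclude using Theorem~\ref{theorem: g hhs} together with the quasi\hyp{}isometry invariance of hierarchical hyperbolicity noted in the proof of that theorem.

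For each arc $\alpha \in \A(S,\Delta)$, let $\sigma(\alpha)$ denote the multicurve consisting of those boundary components of a regular neighbourhood of $\Delta \cup \alpha$ which are essential and non\hyp{}peripheral in $S$; this is always a multicurve of one or two curves, depending on whether the endpoints of $\alpha$ lie on the same or different components of $\Delta$. I would define $\G$ to have vertex set $\{\sigma(\alpha) : \alpha \in \A(S,\Delta)\}$, with an edge joining $\sigma(\alpha)$ and $\sigma(\alpha')$ whenever $\alpha$ and $\alpha'$ admit disjoint representatives. I would then verify the five conditions of Definition~\ref{definition: mygraph}: connectedness of $\G$ follows from that of $\A(S,\Delta)$ via the surjection $\sigma$; the $\mcg(S)$\hyp{}action is isometric because mapping classes fix $\partial S$ pointwise; disjoint arcs yield disjoint surgeries, giving the bounded intersection property; and no witness is an annulus, since every witness must contain $\Delta \subseteq \partial S$ and hence meet $\partial S$ nontrivially. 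The critical check is that the witnesses of $\G$ are exactly the essential subsurfaces containing $\Delta$ (excluding $S_{0,3}$, which is automatically excluded from any witness set): if $X$ contains $\Delta$, then for any arc $\alpha$, the regular neighbourhood $N(\Delta \cup \alpha)$ contains $\Delta \subseteq X$, and after isotoping $\alpha$ to minimal position with $\partial X$, the curves of $\sigma(\alpha)$ either lie in $X$ or cross $\partial X$, so $\pi_X(\sigma(\alpha)) \neq \varnothing$; conversely, if some $\delta_k \subseteq \Delta$ lies outside $X$, then the component $Y$ of $S \setminus X$ containing $\delta_k$ has positive complexity (an annular $Y$ would force a boundary curve of $X$ adjacent to $Y$ to be isotopic to $\delta_k$, hence peripheral in $S$, contradicting essentiality of $X$), so one can pick an essential arc $\alpha \subseteq Y$ with endpoints on $\delta_k$ whose surgery multicurve $\sigma(\alpha)$ lies in $Y$, and is therefore disjoint from $X$.

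Having established that $\G$ is a \mygraph{} with the same witnesses as $\A(S,\Delta)$, I would show that the surgery map $\sigma \colon \A(S,\Delta) \to \G$ is a quasi\hyp{}isometry, following the template of Proposition~\ref{proposition: sep qi ksep}. Surjectivity is immediate, and $\sigma$ is Lipschitz because disjoint arcs map to adjacent or equal vertices. For the lower bound, for each vertex $m$ of $\G$ define $C_m = \{\alpha \in \A(S,\Delta) : i(\sigma(\alpha), m) \leq N\}$ for some fixed $N$, and show by finiteness of $\mcg(S)$\hyp{}orbits on pairs of bounded intersection that each $C_m$ and each $C_m \cup C_{m'}$ (for adjacent $m, m'$) has uniformly bounded diameter in $\A(S,\Delta)$. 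This assembles into a coarse Lipschitz retract and gives the required quasi\hyp{}isometric embedding. Theorem~\ref{theorem: g hhs} then endows $\G$ with a hierarchically hyperbolic structure with respect to projections to the curve graphs of its witnesses, and transporting this structure across $\sigma$ produces the desired hierarchically hyperbolic structure on $\A(S,\Delta)$.

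The main obstacle is the witness identification, in particular the production of arcs $\alpha$ whose surgery multicurve avoids a given non\hyp{}witness subsurface $X$. The annular\hyp{}complement obstruction, handled above by an essentiality argument, is the main subtlety; similar but more involved case analysis is required when $X$ shares many boundary components with $\Delta$, or when the complement $S \setminus X$ has pair\hyp{}of\hyp{}pants components containing a single $\delta_k$. These cases are of the same flavour as arguments in Section~\ref{section: ksep} and should pose no conceptual difficulty beyond the combinatorics of surgery on surfaces.
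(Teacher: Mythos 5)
Your overall architecture is the same as the paper's: the auxiliary graph you build from the surgery multicurves $\sigma(\alpha)$ is (up to the precise edge relation) exactly the paper's $\G(S,\Delta)$, the witness identification is carried out the same way (including the observation that a complementary component containing a component of $\Delta$ cannot be an annulus), and the conclusion is reached by transporting the hierarchically hyperbolic structure across the surgery map. One small point to add at the end: the transported projections are the composites $\pi_X\circ\sigma$, so to get the theorem as stated (projections defined by arc surgery) you still need to check that $\pi_X(\alpha)$ and $\pi_X(\sigma(\alpha))$ coarsely agree for every witness $X$; this follows from the fact that $\alpha$ and $\sigma(\alpha)$ are disjoint, but it is a step the paper carries out explicitly and you should not omit.

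The genuine gap is in your proof that $\sigma$ is a quasi\hyp{}isometric embedding. You propose to bound $\diam_{\A(S,\Delta)}(C_m)$ for $C_m=\{\alpha : i(\sigma(\alpha),m)\le N\}$ ``by finiteness of $\mcg(S)$\hyp{}orbits on pairs of bounded intersection.'' Finiteness of orbits only yields that \emph{bounded intersection implies bounded distance}; the set $C_m$ contains pairs of arcs with arbitrarily large intersection number (for instance $\alpha$ and $\phi^k(\alpha)$ for $\phi$ supported in the complement of the pair of pants cut off by $m$, since such $\phi$ fixes $m$ and hence preserves $C_m$), so this does not bound $\diam(C_m)$. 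Nor does the adjustment argument of Lemma~\ref{lemma: coarse lipschitz retract} transplant: that proof crucially uses that every complementary component of a vertex of $\ksep_\G(S)$ is a non\hyp{}witness, whereas the complement of a vertex $m$ of $\G(S,\Delta)$ always contains a witness, so you cannot find vertices of $\A(S,\Delta)$ avoiding the relevant complementary pieces. In fact, bounding $\diam(C_m)$ is essentially equivalent to the lower quasi\hyp{}isometry bound you are trying to prove. The paper avoids this entirely by writing down an explicit coarse inverse $\eta\colon\G(S,\Delta)\to\A(S,\Delta)$, sending $m$ to the essential arc in the pair of pants it cuts off with endpoints on $\Delta$, and checking directly that $\eta$ is Lipschitz (via an intersection count for arcs in pairs of pants) and that $\eta\circ\sigma$ is within distance $1$ of the identity. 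You should replace the retract argument with this quasi\hyp{}inverse; with it, the rest of your outline goes through.
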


\begin{definition}
The graph $\G(S, \Delta)$ has a vertex for:
\begin{enumerate}
\item any curve cutting off a copy of $S_{0,3}$ at least one of whose other boundary components is in $\Delta$, 
\item any pair of disjoint curves cutting off a copy of $S_{0,3}$ whose other boundary component is in $\Delta$.
\end{enumerate}
There is an edge between vertices $a$ and $b$ if $i(a, b) \le 4$. 
\end{definition}

\begin{figure}[h!]
\centering
\begin{subfigure}[b]{0.25\textwidth}
\centering
\includegraphics[width=\textwidth]{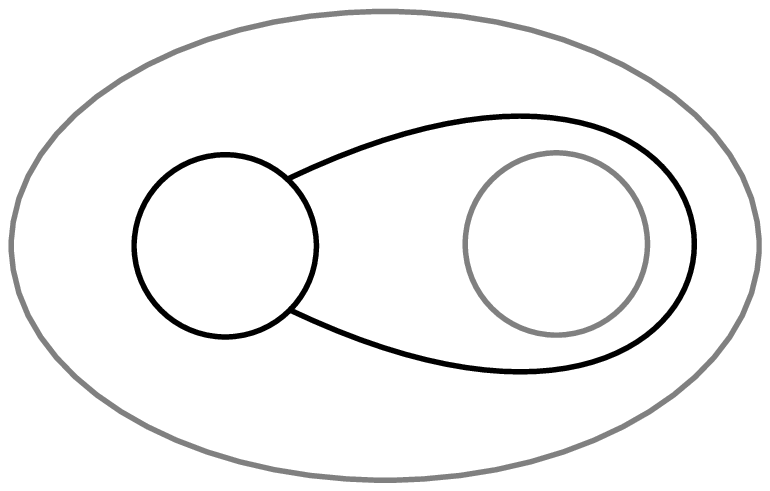}
\subcaption{}
\label{figure: psiexample1}
\end{subfigure}
\quad
\begin{subfigure}[b]{0.25\textwidth}
\centering
\includegraphics[width=\textwidth]{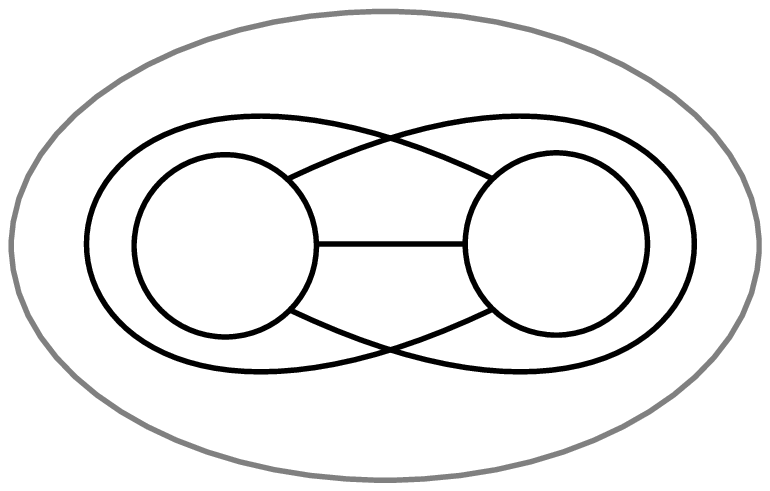}
\subcaption{}
\label{figure: psiexample2}
\end{subfigure}
\quad
\begin{subfigure}[b]{0.25\textwidth}
\centering
\includegraphics[width=\textwidth]{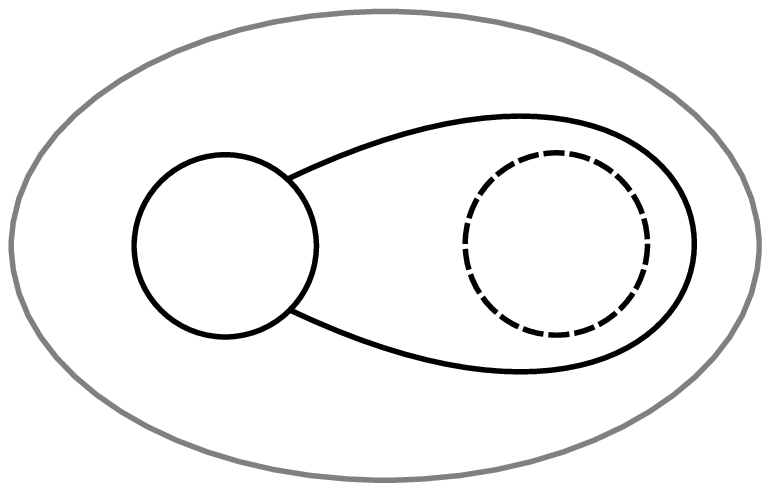}
\subcaption{}
\label{figure: psiexample3}
\end{subfigure}
\caption{Vertices of $\G(S,\Delta)$, in grey. Solid black curves represent components of $\partial S$ in $\Delta$ and dotted curves are components of $\partial S$ not in~$\Delta$. The arcs are the pre\hyp{}images of each vertex under the map $\psi$ in Proposition~\ref{proposition: arc graph qi}.}
\label{figure: psiexamples}
\end{figure}

\begin{claim} \label{claim: arc graph witnesses}
The sets of witnesses for $\A(S, \Delta)$ and for $\G(S, \Delta)$ are the same.
\end{claim}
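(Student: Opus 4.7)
The plan is to prove the two containments between the sets of witnesses, both of which pivot on the observation that a vertex of $\G(S,\Delta)$ is essentially a small pair of pants $P \cong S_{0,3}$ meeting $\Delta$.

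For the inclusion of $\A(S,\Delta)$-witnesses into $\G(S,\Delta)$-witnesses, I would take an essential subsurface $X$ containing $\Delta$ with $X \not\cong S_{0,3}$ and any vertex $a$ of $\G(S,\Delta)$, and show $\pi_X(a) \ne \varnothing$. By definition $a$ cuts off a pair of pants $P \subset S$ with some $\delta \in \Delta$ appearing as a boundary component of $P$. If $a$ could be isotoped disjoint from $X$, then $X$ would sit in a single component of $\overline{S \sminus a}$; since $\delta \in X \cap P$, that component must be $P$, forcing $X \subseteq P$. A direct topological check now shows that any essential subsurface of $S$ contained in the pair of pants $P$ and containing the $\partial S$-component $\delta$ must equal $P$ (the only essential non-peripheral curves of $S$ lying inside $P$ are parallel to the interior boundary of $P$, leaving no room for a proper essential sub-pair-of-pants still containing $\delta$). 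This would force $X \cong S_{0,3}$, contradicting the hypothesis.

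For the reverse inclusion, suppose $X$ is a witness for $\G(S,\Delta)$. Emptiness of $\C(S_{0,3})$ rules out $X \cong S_{0,3}$, so it remains to show $\Delta \subseteq \partial X$. I argue by contrapositive: pick $\delta \in \Delta$ not on $\partial X$, let $Y$ be the component of $\overline{S \sminus X}$ containing $\delta$, and construct a vertex of $\G(S,\Delta)$ contained in $Y$ (hence disjoint from $X$). Connectedness of $S$ forces $Y$ to share at least one curve with $\partial X$, and essentiality of $X$ prevents $Y$ from being an annulus around $\delta$. If $Y$ contains another component $\delta'$ of $\partial S$, a curve in $Y$ cutting off a pair of pants with $\delta$ and $\delta'$ gives a vertex of type~(i). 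Otherwise $\delta$ is the only $\partial S$-boundary of $Y$: in the base case $Y \cong S_{0,3}$, pushing the two $\partial X$-curves on $\partial Y$ slightly into $Y$ already gives a vertex of type~(ii); for more complicated $Y$, I take $\gamma_1, \gamma_2$ to be the two non-$\delta$ boundary components of a regular neighborhood $N(\delta \cup \beta)$, where $\beta$ is a suitably chosen essential arc in $Y$ with both endpoints on $\delta$.

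The main technical obstacle is this last construction: I must arrange $\beta$ so that the resulting $\gamma_1, \gamma_2$ are distinct isotopy classes in $S$, essential, and non-peripheral. Non-isotopy fails only if some component of $Y \sminus N(\delta \cup \beta)$ is an annulus with $\gamma_1, \gamma_2$ as its boundary components; this is avoided by exploiting either the genus of $Y$ or the presence of at least two $\partial X$-boundary curves of $Y$ (the pure pair-of-pants case $Y \cong S_{0,3}$ having already been handled), and routing $\beta$ so that these extra features separate $\gamma_1$ from $\gamma_2$ in the complement of $N$. Essentiality and non-peripherality in $S$ then follow from the essentiality of $\beta$ in $Y$ and of $Y$ in $S$, and the two directions together give equality of the witness sets.
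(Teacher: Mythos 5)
Your proof is correct and follows essentially the same route as the paper: one inclusion by showing that a vertex of $\G(S,\Delta)$ disjoint from an $\A(S,\Delta)$-witness $X$ would force $X$ into the cut-off pair of pants (impossible for positive complexity) or into a complementary piece missing a component of $\Delta$, and the other by producing a pair-of-pants vertex of $\G(S,\Delta)$ inside the component of $S \sminus X$ containing a missed $\delta$. The paper is terser in the second direction---it simply asserts the existence of an essential $S_{0,3}$ containing $\delta$ in that component and takes its non-peripheral boundary curves---whereas you carry out the regular-neighbourhood construction $N(\delta \cup \beta)$ explicitly and flag the need to arrange that the resulting curves are essential, non-peripheral and non-isotopic, a point the paper does not address.
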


We want to show that, as for $\A(S, \Delta)$, the set $\X$ of witnesses for $\G(S, \Delta)$ is equal to the set $\mathfrak{Y}$ of essential subsurfaces of positive complexity which contain~$\Delta$.
\begin{itemize}
\item[$\X \subseteq \mathfrak{Y}$] Suppose that $X$ is a witness for $\G(S, \Delta)$ and that some component $\delta$ of $\Delta$ is not contained in~$X$.
Let $Z$ be the component of $S \sminus X$ containing~$\delta$.
Since $X$ is an essential subsurface, $Z$ is not an annulus, so there exists an essential subsurface $W \subset Z$ where $W \cong S_{0,3}$ and $W$ contains~$\delta$.
At least one boundary component of $W$ must be a non\hyp{}peripheral curve in~$S$.
These non\hyp{}peripheral boundary components define a vertex of $\G(S, \Delta)$ which does not intersect $X$, contradicting that $X$ is a witness.
 \item[$\mathfrak{Y} \subseteq \X$] Let $X$ be a subsurface in $\mathfrak{Y}$ and suppose that there exists a vertex $a$ of $\G(S, \Delta)$ which does not intersect~$X$.
The vertex $a$ is a curve or pair of curves bounding an essential subsurface $W \cong S_{0,3}$, such that at least one boundary component of $W$ is in~$\Delta$.
Since $a$ does not intersect $X$, we have that $X$ is contained either in $W$ or in $S \sminus W$.
The former is impossible since subsurfaces in $\mathfrak{Y}$ are assumed to have positive complexity.
The latter contradicts that $X$ contains all components of~$\Delta$.
\end{itemize}

\qedclaim

In particular, Claim~\ref{claim: arc graph witnesses} shows that $\G(S, \Delta)$ has no annular witnesses.
The other conditions for $\G(S, \Delta)$ to be a \mygraph{} are easily verified, with the exception perhaps of connectedness.
However, since $\A(S, \Delta)$ is connected, Proposition~\ref{proposition: arc graph qi} will prove that $\G(S, \Delta)$ is also connected.

For an arc $\alpha$ in $\A(S, \Delta)$, define $\psi(\alpha)$ to be the non\hyp{}peripheral boundary components of a small regular neighbourhood of $\alpha \cup \partial S$ (see Figure~\ref{figure: psiexamples}).
In other words, $\psi(\alpha)$ is the multicurve obtained by the subsurface projection of $\alpha$ to~$\C(S)$.
Note that $\psi(\alpha)$ is a vertex of $\G(S, \Delta)$.

\begin{proposition} \label{proposition: arc graph qi}
The map $\psi\colon \A(S, \Delta) \rightarrow \G(S, \Delta)$ defined above is a quasi\hyp{}isometry.
Moreover, the subsurface projection from $\A(S, \Delta)$ to the curve graph of a witness $X$ agrees with the subsurface projection from $\G(S, \Delta)$ to $\C(X)$ precomposed with $\psi$, up to uniformly bounded error.
\end{proposition}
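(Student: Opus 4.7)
The plan is to verify the three defining properties of a quasi\hyp{}isometry for $\psi$ separately, and then to verify the subsurface projection statement by a direct geometric comparison of $\alpha$ and $\psi(\alpha)$ inside a witness.

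First I would check that $\psi$ is coarsely Lipschitz. If $\alpha, \beta$ are disjoint arcs in $\A(S, \Delta)$, take regular neighbourhoods $N(\alpha \cup \partial S)$ and $N(\beta \cup \partial S)$ sharing a common thin collar of $\partial S$. Outside the collar the two neighbourhoods are disjoint (since $\alpha$ and $\beta$ are disjoint), and inside the collar their boundary components intersect in a number of points controlled by the endpoints of $\alpha$ and $\beta$ on $\Delta$. A bookkeeping check gives $i(\psi(\alpha), \psi(\beta)) \le 4$, so $\psi(\alpha)$ and $\psi(\beta)$ coincide or are joined by an edge in $\G(S, \Delta)$, i.e.\ $\psi$ is $1$-Lipschitz. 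Coarse surjectivity is in fact strict surjectivity on vertex sets: every vertex $v$ of $\G(S, \Delta)$ is, by definition, the set of non-$\Delta$ boundary components of a pair of pants $P_v \subset S$ with a $\Delta$-component on $\partial P_v$; choosing an arc $\alpha_v \subset P_v$ with endpoints on $\Delta \cap \partial P_v$ (both on the same $\Delta$-component in case~(2), one on each of two $\Delta$-components in case~(1) with two boundary components in $\Delta$, or appropriately in case~(1) with one) gives $\psi(\alpha_v) = v$.

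Making the choice of $\alpha_v$ above $\mcg(S)$-equivariantly (fix one arc per $\mcg(S)$-orbit of $v$, then extend by the action) yields a map $r\colon \G(S, \Delta) \to \A(S, \Delta)$ with $\psi \circ r = \mathrm{id}$. To see $r$ is coarsely Lipschitz, observe that adjacent vertices $v_1, v_2$ of $\G(S, \Delta)$ satisfy $i(v_1, v_2) \le 4$, so pairs $(v_1, v_2)$ fall into finitely many $\mcg(S)$-orbits. Combined with $\mcg(S)$-invariance of $r$ and connectedness of $\A(S, \Delta)$, this forces $d_{\A(S, \Delta)}(r(v_1), r(v_2))$ to be uniformly bounded, exactly as in the proof of Lemma~\ref{lemma: coarse lipschitz retract}. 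Together, these steps give the quasi-isometry. For the subsurface projection statement, let $X$ be a witness, so $\Delta \subset X$ and $\partial_S X$ consists of essential curves in $S$ disjoint from $\partial S$. By construction, $\psi(\alpha)$ coincides with $\alpha$ outside a collar of $\partial S$; since $\partial_S X$ lies outside this collar, the components of $\alpha \cap X$ and $\psi(\alpha) \cap X$ meeting $\partial_S X$ are in bijection and homotopic rel $\partial_S X$. Hence the surgery curves defining $\pi_X(\alpha)$ and $\pi_X(\psi(\alpha))$ agree up to alterations occurring in a small neighbourhood of $\Delta$, which is disjoint from $\partial_S X$ and so does not affect the resulting curves up to bounded error; this yields $d_X(\pi_X(\alpha), \pi_X(\psi(\alpha))) \le C$ for a uniform constant $C$.

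The main obstacle is the careful bookkeeping of Step~1, since $\psi(\alpha)$ may consist of either one or two curves depending on the topological type of $\alpha$ and on how its endpoints lie in $\Delta$; all of the possible configurations must be enumerated to obtain a uniform intersection bound near $\partial S$. Beyond that, the coarse-inverse argument is a direct application of $\mcg(S)$-orbit finiteness exactly as exploited earlier in the paper, and the subsurface projection comparison is a routine observation that surgery near $\partial S$ is invisible to projections onto subsurfaces whose boundary is separated from $\partial S$.
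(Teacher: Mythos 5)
Your proposal is correct and follows essentially the same route as the paper: $\psi$ is shown to be $1$\hyp{}Lipschitz, a quasi\hyp{}inverse is built by assigning to each vertex the canonical arc in its distinguished pair of pants (so that $\psi\circ r=\mathrm{id}$), and the projection statement follows because $\alpha$ and $\psi(\alpha)$ are disjoint, so the surgered projections to a witness $X$ coarsely agree. The one variation is in proving the quasi\hyp{}inverse is Lipschitz: the paper bounds $i(\eta(a),\eta(b))\le 4$ directly for adjacent $a,b$ by an arcs\hyp{}in\hyp{}pants count, whereas you use finiteness of $\mcg(S)$\hyp{}orbits of adjacent pairs together with equivariance and connectedness of $\A(S,\Delta)$ --- both are valid, though for your version one should note that the equivariant extension is well defined precisely because the chosen arcs are the canonical ones (unique in their pair of pants given the prescribed endpoint data), hence automatically preserved by vertex stabilisers.
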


As in the proof of Theorem~\ref{theorem: g hhs}, Proposition~\ref{proposition: arc graph qi}, along with the fact that $\G(S, \Delta)$ is a \mygraph{}, will complete the proof of Theorem \ref{theorem: arc graph hhs}.

\begin{proof}[Proof of Proposition \ref{proposition: arc graph qi}]
We will first prove that $\psi$ is Lipschitz and then construct a Lipschitz quasi\hyp{}inverse.
Suppose that arcs $\alpha$ and $\beta$ are at distance~1 in $\A(S, \Delta)$.
Then $\alpha$ and $\beta$ are disjoint.
Hence $i(\psi(\alpha), \psi(\beta)) \le 4$, so $\psi(\alpha)$ and $\psi(\beta)$ are adjacent in $\G(S, \Delta)$.
By the triangle inequality, $\psi$ is 1\hyp{}Lipschitz.

Now, define a map $\eta\colon \G(S, \Delta) \rightarrow \A(S, \Delta)$ as follows.
For a vertex $a$ of $\G(S, \Delta)$, let $X_a$ be the component of $S \sminus a$ which is a copy of $S_{0,3}$ (in the few cases where $S \sminus a$ could be two copies of $S_{0,3}$, each with boundary components in~$\Delta$, then we choose one).
If only one boundary component of $X_a$ is in $\Delta$, define $\eta(a)$ to be the (unique) arc in $X_a$ which has both endpoints in this boundary component.
If two boundary components of $X_a$ are in $\Delta$, define $\eta(a)$ to be the arc in $X_a$ with one endpoint in each of these two components.

The map $\psi\circ\eta\colon \G(S, \Delta) \rightarrow \G(S, \Delta)$ is equal to the identity map on $\G(S, \Delta)$.
Consider the map $\eta\circ\psi\colon \A(S,\Delta) \rightarrow \A(S, \Delta)$.
Assume for now that only one component of $S \sminus \psi(\alpha)$ is a copy of~$S_{0,3}$.
First suppose the arc $\alpha$ is such that $\psi(\alpha)$ bounds a pair of pants with only one boundary component in~$\Delta$.
Then $\alpha$ can only be an arc with both endpoints in this boundary component, and hence exactly coincides with $\eta\circ\psi(\alpha)$.
Suppose now that $\psi(\alpha)$ bounds a pair of pants with two boundary components in $\Delta$.
Then there are three possibilities for $\alpha$ - one with both endpoints in one of the boundary components, one with both endpoints in the other, and one with an endpoint in each.
The third of these possibilities is exactly $\eta\circ\psi(\alpha)$, and is disjoint from the other two possibilities, so that $d_{\A(S, \Delta)}(\alpha, \eta\circ\psi(\alpha)) \le 1$.
If $\psi(\alpha)$ has two components which are copies of $S_{0,3}$ and which have boundary components in $\Delta$, then we have the additional possibility that when defining $\eta\circ\psi(\alpha)$ we choose the component which did not contain~$\alpha$.
Then $\eta\circ\psi(\alpha)$ is disjoint from~$\alpha$.
Hence, in general, $\eta\circ\psi(\alpha)$ is at distance at most 1 from $\alpha$ in $\A(S, \Delta)$.


We now claim that $\eta$ is Lipschitz.
Let $a$ and $b$ be adjacent vertices of $\G(S, \Delta)$.
We will find a bound on the intersection number of $\eta(a)$ and $\eta(b)$.
Let $X_a \cong S_{0,3}$ be the component of $X \sminus a$ containing $\eta(a)$, and similarly for $X_b$.
Consider $a \cap X_b$.
Since $\partial_S X_b = b$ and $i(a,b)\le4$, this consists of one or two arcs (each arc corresponds to two  intersections with~$b$, since $b$ separates~$S$).
Since $\eta(b)$ is contained in $X_b$, so are any intersections of $\eta(b)$ with $a$.
Up to twists on the boundary, any two arcs on a pair of pants intersect at most twice.
Moreover, the endpoints of $\eta(b)$ must be on different boundary components of $X_b$ from the endpoints of $a \cap X_b$.
Hence, $i(a, \eta(b))\le4$.
Now, $a=\partial_S X_a$, so $\eta(b) \cap X_a$ consists of at most two arcs.
By the same argument as above, $i(\eta(a), \eta(b))\le4$ (now arcs of $\eta(a)$ and $\eta(b)$ could have endpoints on the same boundary component of $X_a$, but this would have to be in $\partial S$, so that we are allowed to move the endpoints to remove any boundary twists).
Hence $\eta$ is Lipschitz, completing the proof that $\psi$ is a quasi\hyp{}isometry.


It remains to prove that, given a witness $X$ and an arc $\alpha$ in $\A(S, \Delta)$, the subsurface projection of $\alpha$ to $\C(X)$ uniformly coarsely coincides with the subsurface projection of $\psi(\alpha)$.
Note that since $X$ is a witness for $\A(S, \Delta)$ and $\G(S, \Delta)$, both $\alpha$ and $\psi(\alpha)$ do indeed have non\hyp{}trivial subsurface projection to~$X$.
To obtain the subsurface projection by surgery (see Section~\ref{subsection: subsurface projection}), we require the arcs and curves we are projecting to be in minimal projection with $\partial_S X$.
It is possible to realise $\alpha$, $\psi(\alpha)$ and $\partial_S X$ simultaneously in pairwise minimal position; for example we may choose a hyperbolic metric on $S$ and take geodesic representatives.
From the definition of the map~$\psi$, it is clear that $\alpha$ and $\psi(\alpha)$ are disjoint in minimal position.
Hence the respective arcs of intersection of $\alpha$ and $\psi(\alpha)$ with $X$ are also pairwise disjoint.
By the surgery arguments given in \cite[Lemma~2.2]{mm2}, we see that $d_X(\alpha, \psi(\alpha))\le2$.
\end{proof}

\end{document}